\documentclass[a4paper,10pt,twoside]{article}

\usepackage{amssymb}
\usepackage{amsthm}
\usepackage{amsmath}
\usepackage{enumerate}
\usepackage{hyperref}

\pagestyle{myheadings}

\markboth{T. Szabados}{Self-intersection local time based on random walks}

\newtheorem{thmA}{Theorem}
\newtheorem{thm}{Theorem}

\newtheorem{lem}{Lemma}

\newtheorem{cor}{Corollary}

\newcommand*{\quart}{\frac14}
\newcommand*{\lb}{\left\{}
\newcommand*{\rb}{\right\}}
\newcommand*{\PP}{\mathbb{P}}
\newcommand*{\di}{\, \mathrm{d} }

\newcommand*{\loc}{\ensuremath{\mathcal{L}}}
\newcommand*{\ind}{\ensuremath{\mathbf{1}_}}

\newcommand*{\curl}{\ensuremath{\mathrm{curl}}}

%%%%%%%%%%%%%%%%%%%%%%%%%%%%%%%%%%%%%%%%%%%%%%%%%%%%%%%%%%%%%%%%%%%

\begin{document}

\title{Self-intersection local time of planar Brownian motion based on a strong approximation
by random walks}

\author{Tam\'as Szabados\footnote{Address:
Department of Mathematics, Budapest University of Technology and Economics, M\H{u}egyetem rkp. 3, H
\'ep. V em. Budapest, 1521, Hungary, e-mail: szabados@math.bme.hu, telephone: (+36 1)
463-1111/ext. 5907, fax: (+36 1) 463-1677} \\
Budapest University of Technology and Economics}

%\date{}

\maketitle

\bigskip

%%%%%%%%%%%%%%%%%%%%%%%%%%%%%%%%%%%%%%%%%%%%%%%%%%%%%%%%%%%%%%%%%%%

\begin{abstract}
The main purpose of this work is to define planar self-intersection local time by an alternative
approach which is based on an almost sure pathwise approximation of planar Brownian motion by simple,
symmetric random walks. As a result, Brownian self-intersection local time is obtained as an almost
sure limit of local averages of simple random walk self-intersection local times. An important tool is
a discrete version of the Tanaka--Rosen--Yor formula; the continuous version of the formula is
obtained as an almost sure limit of the discrete version. The author hopes that this approach to
self-intersection local time is more transparent and elementary than other existing ones.
\end{abstract}

%%%%%%%%%%%%%%%%%%%%%%%%%%%%%%%%%%%%%%%%%%%%%%%%%%%%%%%%%%%%%%%%%%%

\renewcommand{\thefootnote}{\alph{footnote}}
\footnotetext{ 2010 \emph{MSC.} Primary 60J55. Secondary 60F15, 60G50.} \footnotetext{\emph{Keywords
and phrases.} Self-intersection local time, strong approximation, random walk, Ito's formula,
Tanaka--Rosen--Yor formula.}

%%%%%%%%%%%%%%%%%%%%%%%%%%%%%%%%%%%%%%%%%%%%%%%%%%%%%%%%%%%%%%%%%%%

\section{Introduction} \label{sec:Intro}

Let $(W(t))_{t \ge 0}$ be planar Brownian motion (BM). Formally, its \emph{self-intersection local
time} at the point $x \in \mathbb{R}^2$ up to time $t$ is
\[
\alpha (t, x) = \int_{0}^{t} \int_{0}^{v} \delta (W(v)-W(u)-x) \di u \di v,
\]
where $\delta $ is the Dirac measure at zero. There exist several methods in the literature to make
this definition rigorous. One natural approach, which is the topic of the present work, is to define
$\alpha (t, x)$ as an almost sure limit (when $m \to \infty$) of local averages of self-intersection
local times $\alpha _{m}(t, x)$ of a nested sequence of simple, symmetric planar random walks
$(B_m(t))$. (See the next section for the definition of $B_m$.) For these imbedded random walks
(RW's), self-intersection local time can be defined by elementary counting:
\[
\alpha _{m}(t, x) := 2^{-2m} \#\{(i,j) : 0 \le i \le j < t 2^{2m}, \: B_m(j 2^{-2m}) - B_m(i 2^{-2m})
= x \} .
\]
Then, as it will be seen in Theorem \ref{th:spec_TRY}, the following almost sure limit gives the
Brownian self-intersection local time:
\[
\alpha(t, y) = \lim_{\delta \to 0^+}  \lim_{m \to \infty } \frac{1}{\pi \delta^2} \sum_{ x \in
2^{-m}\mathbb{Z}^2 \cap B_{\delta} (y)} \alpha _m(t, x) \: 2^{-2m}  \qquad (y \ne 0),
\]
where $B_{\delta} (y)$ denotes the disc centered at $y$ with radius $\delta$.

The author hopes that this approach to self-intersection local time is more elementary and more
advantageous from a pedagogical point of view than other existing ones. This method is a special case
of a strong invariance principle for self-inter\-sec\-tion local time. It should be mentioned that
using different methods, strong invariance was shown earlier by Cadre \cite{Cad1997}, and for general
random walks by Bass and Rosen \cite{BR2005}. The method applied in this paper is based on a
Tanaka-like formula, first introduced by Rosen \cite{Ros1986} and then generalized by Yor
\cite{Yor1985}. More exactly, a discrete version of the Tanaka--Rosen--Yor formula for random walks is
given below whose almost sure limit is the continuous version of the formula.

%%%%%%%%%%%%%%%%%%%%%%%%%%%%%%%%%%%%%%%%%%%%%%%%%%%%%%%%%%%%%%%%%%%

\section{Preliminaries} \label{sec:Pre}

A basic tool of the present paper is an elementary construction of Brownian motion. The specific
construction used in the sequel, taken from \cite{Szab1996}, is based on a nested sequence of simple,
symmetric random walks that uniformly converges to the Wiener process (=BM) on bounded intervals with
probability $1$. This will be called \emph{``twist and shrink''} construction. This method is a
modification of the one given by Frank Knight in 1962 \cite{Kni1962}.

We summarize the major steps of the ``twist and shrink'' construction here. We start with an infinite
matrix of independent and identically distributed random variables $X_m(k)$, $\PP \lb X_m(k)= \pm 1
\rb = \frac12$ ($m\ge 0$, $k\ge 1$), defined on the same complete probability space
$(\Omega,\mathcal{F},\PP)$. (All stochastic processes in the sequel will be defined on this
probability space.) Each row of this matrix is a basis of an approximation of the Wiener process with
a dyadic step size $\Delta t=2^{-2m}$ in time and a corresponding step size $\Delta x=2^{-m}$ in
space. Thus we start with a sequence of independent simple, symmetric RW's $S_m(0) = 0$, $S_m(n) =
\sum_{k=1}^{n} X_m(k)$ $(n \ge 1)$.

The second step of the construction is \emph{twisting}. From the independent RW's we want to create
dependent ones so that after shrinking temporal and spatial step sizes, each consecutive RW becomes a
refinement of the previous one.  Since the spatial unit will be halved at each consecutive row, we
define stopping times by $T_m(0)=0$, and for $k\ge 0$,
\[
T_m(k+1)=\min \{n: n>T_m(k), |S_m(n)-S_m(T_m(k))|=2\} \qquad (m\ge 1)
\]
These are the random time instants when a RW visits even integers, different from the previous one.
After shrinking the spatial unit by half, a suitable modification of this RW will visit the same
integers in the same order as the previous RW. We operate here on each point $\omega\in\Omega$ of the
sample space separately, i.e. we fix a sample path of each RW. We define twisted RW's $\tilde{S}_m$
recursively for $k=1,2,\dots$ using $\tilde{S}_{m-1}$, starting with $\tilde{S}_0(n)=S_0(n)$ $(n\ge
0)$ and $\tilde{S}_m(0) = 0$ for any $m \ge 0$. With each fixed $m$ we proceed for $k=0,1,2,\dots$
successively, and for every $n$ in the corresponding bridge, $T_m(k)<n\le T_m(k+1)$. Any bridge is
flipped if its sign differs from the desired:
\[
\tilde{X}_m(n)=\left\{
\begin{array}{rl}
 X_m(n)& \mbox{ if } S_m(T_m(k+1)) - S_m(T_m(k))
= 2\tilde X_{m-1}(k+1), \\
- X_m(n)& \mbox{ otherwise,}
\end{array}
\right.
\]
and then $\tilde{S}_m(n)=\tilde{S}_m(n-1)+\tilde{X}_m(n)$. Then $\tilde{S}_m(n)$ $(n\ge 0)$ is still a
simple symmetric RW \cite[Lemma 1]{Szab1996}. The twisted RW's have the desired refinement property:
\[
\tilde{S}_{m+1}(T_{m+1}(k)) = 2 \tilde{S}_{m}(k) \qquad (m\ge 0, k\ge 0).
\]

The third step of the RW construction is \emph{shrinking}. The sample paths of $\tilde{S}_m(n)$ $(n\ge
0)$ can be extended to continuous functions by linear interpolation, this way one gets
$\tilde{S}_m(t)$ $(t\ge 0)$ for real $t$. The $mth$ \emph{``twist and shrink'' RW} is defined by
\[
\tilde{B}_m(t)=2^{-m}\tilde{S}_m(t2^{2m}).
\]
Then the \emph{refinement property} takes the form
\begin{equation}
\tilde{B}_{m+1}\left(T_{m+1}(k)2^{-2(m+1)}\right) = \tilde{B}_m \left( k2^{-2m}\right) \qquad (m\ge
0,k\ge 0). \label{eq:refin}
\end{equation}
Note that a refinement takes the same dyadic values in the same order as the previous shrunken walk,
but there is a \emph{time lag} in general:
\begin{equation} T_{m+1}(k)2^{-2(m+1)} - k2^{-2m} \ne 0 .
\label{eq:tlag}
\end{equation}

It is clear that this construction is especially useful for local times, since a refinement
approximates the local time of the previous walk, with a geometrically distributed random number of
visits with half-length steps, cf. \cite{SzaSze2005}.

Now let me recall some important facts from \cite{Szab1996} and \cite{SzaSze2005} about the ``twist
and shrink'' construction that will be used in the sequel.

\begin{thmA} \label{th:Wiener}
On bounded intervals the sequence $(\tilde B_m)$ almost surely uniformly converges as $m \to \infty$
and the limit process is Brownian motion $W$. For any $C>1$, and for any $K>0$ and $m \ge 1$ such that
$K 2^{2m} \ge N(C)$, we have
\begin{eqnarray*}
\PP \lb \sup_{0 \le t \le K} |W(t) - \tilde{B}_m(t)| \ge 27 \: C K_*^{\quart} (\log_*K)^{\frac34}
m^{\frac34} 2^{-\frac{m}{2}} \rb
\\
\le \frac{6}{1-4^{1-C}} (K2^{2m})^{1-C} ,
\end{eqnarray*}
where $K_* := K \vee 1$ and $\log_* K := (\log K) \vee 1$.

\end{thmA}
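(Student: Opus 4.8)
The plan is to obtain the estimate by a telescoping argument along the levels $m$, controlling each consecutive difference $\tilde B_{j+1}-\tilde B_j$ through the refinement property \eqref{eq:refin} together with two error sources: the \emph{time lag} between the dyadic grids of levels $j$ and $j+1$ (cf.\ \eqref{eq:tlag}) and the \emph{spatial oscillation} of $\tilde B_{j+1}$. First I would note that the rate estimate below, combined with Borel--Cantelli, simultaneously yields the almost sure uniform convergence of $(\tilde B_m)$ on bounded intervals, so that $W:=\lim_m\tilde B_m$ is a well-defined continuous process and the bound $\sup_{0\le t\le K}|W(t)-\tilde B_m(t)|\le\sum_{j\ge m}\sup_{0\le t\le K}|\tilde B_{j+1}(t)-\tilde B_j(t)|$ is legitimate; that the limit $W$ is Brownian motion then follows from Donsker's theorem applied to the genuine simple symmetric random walk $\tilde S_m$ \cite[Lemma 1]{Szab1996} together with this a.s.\ convergence. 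It thus suffices to estimate $\sup_{0\le t\le K}|\tilde B_{j+1}(t)-\tilde B_j(t)|$ for each $j\ge m$ and to sum.

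Fix $j$ and $t\in[0,K]$, put $k:=\lfloor t2^{2j}\rfloor$ and $s:=T_{j+1}(k)2^{-2(j+1)}$. Since $\tilde B_j$ is linear on $[k2^{-2j},(k+1)2^{-2j}]$ with an increment of absolute value $2^{-j}$, and $\tilde B_j(k2^{-2j})=\tilde B_{j+1}(s)$ by \eqref{eq:refin},
\[
|\tilde B_{j+1}(t)-\tilde B_j(t)|\;\le\;\bigl|\tilde B_{j+1}(t)-\tilde B_{j+1}(s)\bigr|+2^{-j},\qquad |t-s|\le 2^{-2j}+|\Theta_j(k)|,
\]
where $\Theta_j(k):=T_{j+1}(k)2^{-2(j+1)}-k2^{-2j}=\bigl(T_{j+1}(k)-4k\bigr)2^{-2(j+1)}$ is a centered sum of $k$ i.i.d.\ terms, because the increments $T_{j+1}(i)-T_{j+1}(i-1)$ are i.i.d.\ distributed as the first time a simple symmetric random walk started at $0$ reaches $\pm2$ (mean $4$, finite exponential moments). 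The two quantitative inputs are then: (i) a maximal Bernstein inequality for $T_{j+1}(k)-4k$ giving $\PP\{\max_{0\le k\le K2^{2j}}|\Theta_j(k)|>\ell_j\}\le c_1(K2^{2j})^{1-C}$ with $\ell_j$ of order $\sqrt{K_*\log_*K}\,j^{1/2}2^{-j}$ — this is where $K2^{2m}\ge N(C)$ is used, ensuring the required deviation level lies in the Gaussian (moderate-deviation) range of Bernstein's inequality for all $j\ge m$; and (ii) on the event $\{\max_k|\Theta_j(k)|\le\ell_j\}$, covering $[0,K_*+1]$ by $O((K_*+1)/\ell_j)$ intervals of length $\ell_j$ and applying $\PP\{\max_{1\le i\le n}|\tilde S_{j+1}(i)|>a\}\le 2e^{-a^2/(2n)}$ (valid since $\tilde S_{j+1}$ is a simple symmetric random walk) with $a$ chosen so the union bound beats $(K2^{2j})^{1-C}$, which bounds $\sup\{|\tilde B_{j+1}(t)-\tilde B_{j+1}(s)|:|t-s|\le 2^{-2j}+\ell_j\}$ by a quantity of order $K_*^{1/4}(\log_*K)^{3/4}j^{3/4}2^{-j/2}$.

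Combining (i) and (ii), outside an event of probability at most $c_2(K2^{2j})^{1-C}$ one gets $\sup_{0\le t\le K}|\tilde B_{j+1}(t)-\tilde B_j(t)|\le c_3\,C\,K_*^{1/4}(\log_*K)^{3/4}j^{3/4}2^{-j/2}$, the factor $C$ absorbing the $C$-dependent constants from the Bernstein bounds (using $C>1$). Summing over $j\ge m$, the exceptional probabilities add up to at most $c_2(K2^{2m})^{1-C}\sum_{i\ge0}4^{i(1-C)}=c_2(1-4^{1-C})^{-1}(K2^{2m})^{1-C}$, and on the complement, since $\sum_{j\ge m}j^{3/4}2^{-j/2}$ is dominated by a constant multiple of its leading term $m^{3/4}2^{-m/2}$, the telescoping sum is at most $c_4\,C\,K_*^{1/4}(\log_*K)^{3/4}m^{3/4}2^{-m/2}$. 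The remainder is bookkeeping: optimizing the deterministic thresholds $\ell_j$ and $a=a_j$, checking $\ell_j<K_*+1$ and the moderate-deviation regime for $K2^{2m}\ge N(C)$, and tracking the numerical constants so that $c_4$ becomes $27$ and $c_2(1-4^{1-C})^{-1}$ becomes $6/(1-4^{1-C})$.

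I expect the main obstacle to be the clean handling of the two coupled error sources, since the oscillation of $\tilde B_{j+1}$ must be estimated over intervals whose length $2^{-2j}+\max_k|\Theta_j(k)|$ is itself random; the device is to intersect with the good event for the time lag first and only then bound the oscillation over deterministic-length intervals. The secondary technical point is keeping the Bernstein bounds for the exit-time sums in their Gaussian range, which is precisely what dictates the threshold $N(C)$ and the exact shape of the constants $27$ and $6$.
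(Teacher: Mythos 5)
The paper does not prove Theorem \ref{th:Wiener}; it is quoted verbatim from \cite{Szab1996} (cf.\ also \cite{SzaSze2005}). Your proposal --- telescoping over levels via the refinement property \eqref{eq:refin}, a Bernstein-type maximal bound for the centered exit-time sums $T_{j+1}(k)-4k$ controlling the time lag \eqref{eq:tlag}, and then a sub-Gaussian oscillation estimate for $\tilde S_{j+1}$ over deterministic-length intervals after intersecting with the good time-lag event, with the exceptional probabilities summing as a geometric series in $4^{1-C}$ --- is exactly the strategy of the original proof in \cite{Szab1996}, and your accounting of the exponents ($\ell_j^{1/2}\cdot(Cj\log_*K)^{1/2}\sim C^{3/4}K_*^{1/4}(\log_*K)^{3/4}j^{3/4}2^{-j/2}$) is consistent with the stated rate. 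So the proposal is correct and takes essentially the same route as the source the paper relies on.
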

($N(C)$ here and in the sequel denotes a large enough integer depending on $C$, whose value can be
different at each occasion.)

Conversely, with a given Wiener process $W$, one can define the stopping times which yield the
\emph{Skorohod embedded RW's} $B_m(k2^{-2m})$ into $W$. For every $m\ge 0$ let $s_m(0)=0$ and
\begin{equation} \label{eq:Skor1}
s_m(k+1)=\inf{}\{s: s > s_m(k), |W(s)-W(s_m(k))|=2^{-m}\} \qquad (k \ge 0).
\end{equation}
With these stopping times the embedded dyadic walks by definition are
\begin{equation} \label{eq:Skor2}
B_m(k2^{-2m}) = W(s_m(k)) \qquad (m\ge 0, k\ge 0).
\end{equation}
This definition of $B_m$ can be extended to any real $t \ge 0$ by pathwise linear interpolation.

If a Wiener process is built by the ``twist and shrink'' construction described above using a sequence
$(\tilde{B}_m)$ of nested RW's and then one constructs the Skorohod embedded RW's $(B_m)$, it is
natural to ask about their relationship. The next theorem shows that they are asymptotically
equivalent. In general, roughly saying, $(\tilde{B}_m)$ is more useful when someone wants to generate
stochastic processes from scratch, while $(B_m)$ is more advantageous when someone needs discrete
approximations of given processes.

\begin{thmA} \label{th:Wiener_Skor}
For any $C > 1$, and for any $K>0$ and $m \ge 1$ such that $K 2^{2m} \ge N(C)$ we have
\begin{eqnarray*}
\PP \lb \sup_{0\le t\le K} \left| W(t) - B_m(t) \right| \ge 27 \: C K_*^{\quart} (\log_*K)^{\frac34}
m^{\frac34}
2^{-\frac{m}{2}} \rb \\
\le \frac{8}{1 - 4^{1-C}} (K2^{2m})^{1-C} .
\end{eqnarray*}

\end{thmA}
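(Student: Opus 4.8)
The plan is to deduce Theorem~\ref{th:Wiener_Skor} from Theorem~\ref{th:Wiener} by comparing both $\tilde{B}_m$ and $B_m$ to the same Wiener process $W$, using the triangle inequality $|W(t)-B_m(t)| \le |W(t)-\tilde{B}_m(t)| + |\tilde{B}_m(t)-B_m(t)|$. Theorem~\ref{th:Wiener} already controls the first term with the desired rate and probability bound, so the real work is to show that the two families of embedded walks are close to each other on $[0,K]$ with the same order of magnitude $C K_*^{1/4}(\log_* K)^{3/4} m^{3/4} 2^{-m/2}$, up to adjusting constants. The mechanism is that $\tilde{B}_m$ visits a given dyadic level sequence with a time lag relative to $B_m$ (see~\eqref{eq:tlag}), but both track the \emph{same} underlying continuous path $W$, so the spatial discrepancy between them is governed by the modulus of continuity of $W$ over time intervals of length comparable to that lag.

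First I would make precise the comparison between the ``twist and shrink'' walk $\tilde{B}_m$ and the Skorohod-embedded walk $B_m$ at the dyadic times $k2^{-2m}$. By the refinement property~\eqref{eq:refin} and the definition~\eqref{eq:Skor1}--\eqref{eq:Skor2}, both walks take values in $2^{-m}\mathbb{Z}$ and change by $\pm 2^{-m}$; the point is to bound $|s_m(k) - T_m(k)2^{-2m}|$ (or whatever the appropriate time-matching between the two constructions is) uniformly for $k \le K2^{2m}$. This is exactly the kind of estimate that is available from~\cite{Szab1996} and~\cite{SzaSze2005}: the accumulated time lag after $k$ steps is a sum of roughly independent mean-zero increments, so it is of order $k^{1/2}$ in probability, with exponential-type tails after the usual truncation. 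Combined with the Lévy modulus of continuity of $W$ — $|W(s)-W(t)| \lesssim |s-t|^{1/2}(\log(1/|s-t|))^{1/2}$ on bounded intervals — a time lag of order $(K2^{2m})^{1/2} 2^{-2m} = K^{1/2}2^{-m}$ translates into a spatial error of order $K^{1/4}2^{-m/2}(\log\cdots)^{1/2}$, which is of the same form as the rate in Theorem~\ref{th:Wiener}, possibly with a slightly larger power of $\log_* K$ and $m$ that is absorbed into the stated exponents. Handling the linear interpolation between dyadic times adds only a further $2^{-m}$, which is negligible.

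Then I would combine the two pieces: on the intersection of the event in Theorem~\ref{th:Wiener} and the analogous good event for the lag estimate, the triangle inequality gives $\sup_{0\le t\le K}|W(t)-B_m(t)|$ below a constant multiple of $C K_*^{1/4}(\log_* K)^{3/4} m^{3/4} 2^{-m/2}$. One verifies that the constant can be taken to be $27$ as stated (the paper presumably chose the numerical constants in Theorem~\ref{th:Wiener} with exactly this kind of sum in mind, leaving headroom), and the probabilities of the two bad events add, giving a bound of the form $\frac{6}{1-4^{1-C}}(K2^{2m})^{1-C}$ plus the contribution from the lag event; requiring $K2^{2m}\ge N(C)$ lets one bound that extra contribution by $\frac{2}{1-4^{1-C}}(K2^{2m})^{1-C}$, yielding the claimed $\frac{8}{1-4^{1-C}}(K2^{2m})^{1-C}$.

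The main obstacle is the second step: obtaining the tail bound for the cumulative time lag $|s_m(k) - T_m(k)2^{-2m}|$ with the right power of $k$ (hence of $K2^{2m}$) and with exponential tails, and then converting it via the modulus of continuity into a spatial bound whose order in $K$, $\log_* K$, and $m$ does not exceed the rate already present in Theorem~\ref{th:Wiener}. If the lag estimate in~\cite{SzaSze2005} is stated in a slightly different normalization, reconciling it — in particular tracking how the $m^{3/4}$ and $(\log_* K)^{3/4}$ factors arise and checking they are not worsened — is the delicate bookkeeping. Everything else (the triangle inequality, adding probabilities, the interpolation error) is routine.
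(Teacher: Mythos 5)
Theorem~\ref{th:Wiener_Skor} is quoted in the paper without proof from \cite{Szab1996} and \cite{SzaSze2005}, and your strategy is essentially the one used there: bound the cumulative lag of the Skorohod stopping times uniformly for $k \le K2^{2m}$ (it is a centered sum of i.i.d.\ exit times of $W$ from $[-2^{-m},2^{-m}]$, each with mean $2^{-2m}$ and exponential tails, hence of order $K^{1/2}2^{-m}$ with good tail bounds), convert that lag into a spatial error via the increments of $W$ over short intervals, and add the resulting exceptional probability to the one from Theorem~\ref{th:Wiener}, which is where $\frac{6}{1-4^{1-C}}$ grows to $\frac{8}{1-4^{1-C}}$. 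The one point to correct is that the relevant lag is $|s_m(k)-k2^{-2m}|$, since $|W(k2^{-2m})-B_m(k2^{-2m})|=|W(k2^{-2m})-W(s_m(k))|$; it is not a comparison between $s_m(k)$ and the twist-and-shrink times $T_m(k)$, which run on the clock of the auxiliary discrete walks $S_m$ rather than that of $W$.
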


Apply the ``twist and shrink'' construction $d$-times independently, to obtain a $d$-dimensional
Brownian motion $W = (W^1, \dots , W^d)$ (vector components will be denoted by superscripts), the
corresponding Skorohod-embedded RW's $B_m = (B_m^1, \dots ,B_m^d)$, and stopping times $(s_m^1(n),
\dots , s_m^d)$:
\[
B_m^j(k2^{-2m}) = W^j(s_m^j(k)) \qquad (m\ge 0, k\ge 0, j=1, \dots ,d).
\]
Please note that in this paper $d$-dimensional random walks are defined as a vector of $d$ independent
one-dimensional random walks. This means that the coordinate axes of a usual $d$-dimensional random
walk are rotated and the length of a step is multiplied by $\sqrt{d}$.

Then  Theorem \ref{th:Wiener_Skor} and Borel--Canteli lemma imply
\begin{cor} \label{co:strong_d}
Taking a $d$-dimensional Brownian motion $W$ and Skorohod embedded RW's $B_m$, for any $K>0$ and $m
\ge 1$ one has
\begin{multline} \label{eq:strong_d}
\sup_{0\le t\le K} \left| W(t) - B_m(t) \right| = O\left((\log n)^{\frac34}\; n^{-\frac14}\right) =
O\left(m^{\frac34} 2^{-\frac{m}{2}} \right) \quad \text{a.s.} ,
\end{multline}
where $n = K 2^{2m}$ denotes the number of vertices of an imbedded random walk $B_m$ over the time
interval $[0, K]$.
\end{cor}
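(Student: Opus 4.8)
The plan is to combine the one-dimensional error estimate of Theorem \ref{th:Wiener_Skor} with the Borel--Cantelli lemma, coordinate by coordinate, and then take a finite union over the $d$ components. Fix $K>0$ and pick any constant $C>1$. By construction $W=(W^1,\dots,W^d)$ and $B_m=(B_m^1,\dots,B_m^d)$ are obtained by applying the ``twist and shrink'' construction (and then the Skorohod embedding) independently in each coordinate, so Theorem \ref{th:Wiener_Skor} applies to every component separately: for all $m$ with $K2^{2m}\ge N(C)$ and each $j\in\{1,\dots,d\}$,
\[
\PP \lb \sup_{0\le t\le K} \left| W^j(t) - B_m^j(t) \right| \ge 27 \: C K_*^{\quart} (\log_*K)^{\frac34} m^{\frac34} 2^{-\frac{m}{2}} \rb \le \frac{8}{1 - 4^{1-C}} (K2^{2m})^{1-C}.
\]
The role of the choice $C>1$ is exactly that the right-hand side equals $\mathrm{const}\cdot 4^{m(1-C)}$, a convergent geometric series in $m$.

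Since $\sum_{m\ge 1}(K2^{2m})^{1-C}<\infty$, the Borel--Cantelli lemma shows that for each fixed $j$ the event in the display above occurs for only finitely many $m$, almost surely; equivalently, there is an a.s.\ finite random index $m_j$ such that $\sup_{0\le t\le K}|W^j(t)-B_m^j(t)| < 27\,C K_*^{\quart}(\log_*K)^{\frac34} m^{\frac34} 2^{-\frac{m}{2}}$ for all $m\ge m_j$. Intersecting these $d$ almost sure events and using the triangle inequality $|W(t)-B_m(t)|\le\sum_{j=1}^d|W^j(t)-B_m^j(t)|$, we obtain that almost surely, for all $m$ large enough,
\[
\sup_{0\le t\le K}\left|W(t)-B_m(t)\right| \le 27\,d\,C K_*^{\quart}(\log_*K)^{\frac34}\, m^{\frac34}\, 2^{-\frac{m}{2}} = O\!\left(m^{\frac34}\,2^{-\frac m2}\right),
\]
where the implied constant depends on $d$, $C$, $K$ and the sample path, but not on $m$. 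This is the second estimate claimed in \eqref{eq:strong_d}.

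It remains to translate the bound into the variable $n=K2^{2m}$, the number of vertices of $B_m$ on $[0,K]$. From $2^{2m}=n/K$ one gets $2^{-m/2}=(2^{2m})^{-\quart}=K^{\quart}n^{-\quart}$, while $m=\half\log_2(n/K)=O(\log n)$, hence $m^{3/4}2^{-m/2}=O\!\left((\log n)^{\frac34}\,n^{-\frac14}\right)$, which gives the first form in \eqref{eq:strong_d}. There is no genuinely hard step here; the only two points worth noting are that the estimate of Theorem \ref{th:Wiener_Skor} already refers to the linearly interpolated process, so nothing extra is needed to control $B_m$ between lattice points, and that the exceptional null set is allowed to depend on the fixed horizon $K$ — to obtain the estimate simultaneously for every $K$ one would additionally intersect over a sequence $K\to\infty$ and use that the supremum is nondecreasing in $K$.
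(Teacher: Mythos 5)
Your argument is exactly the one the paper intends: it states that Theorem \ref{th:Wiener_Skor} and the Borel--Cantelli lemma imply the corollary, and you have supplied precisely that — the coordinate-wise application of the tail bound, summability of $(K2^{2m})^{1-C}$ for $C>1$, Borel--Cantelli, and the change of variable $n=K2^{2m}$. The proof is correct and matches the paper's approach.
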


As it was mentioned above, the above approach to Brownian motion is especially suitable to give an
elementary definition of Brownian local time as an a.s. limit of RW local times, cf.
\cite{SzaSze2005}. In fact, this was the main motivation to find a similar definition of planar
self-intersection local time as well.

The idea that random walk approximations can be applied to obtain results about Brownian local time
goes back to Knight \cite{Kni1963}, who proved the celebrated Ray-Knight theory this way. R\'ev\'esz
\cite{Rev1981} and Cs\'aki \& R\'ev\'esz \cite{CsR1983} were the first to prove strong invariance for
local times.

Let $\tilde{\ell}_m(0,x) := 0$ and
\begin{equation}\label{eq:dloct}
\tilde{\ell}_m(k,x) := \#\{j : 0 \le j < k, \: \tilde{S}_m(j) = x \} .
\end{equation}
Define \emph{local times} of the ``twist and shrink'' RW $\tilde B_m$ or imbedded RW's $B_m$ by
\begin{equation}\label{eq:tasloct}
\tilde{\loc}_m(t,x) := 2^{-m} \tilde{\ell}_m \left(t2^{2m}, x 2^m \right).
\end{equation}
Then \cite{SzaSze2005} shows
\begin{thmA}
On any strip $[0, K] \times \mathbb{R}$,
\[
\lim_{m \to \infty} \tilde{\loc}_m(t,x) =  \loc(t,x) \quad \text{a.s.},
\]
uniformly in $(t,x)$, where $\loc(t,x)$ is the local time of BM. Hence this automatically gives a
version of Brownian local time  which is continuous in $(t,x)$.

Moreover, for all $K>0$ and $m \ge 1$ one has
\[
\sup_{(t,x) \in [0,K] \times \mathbb{R}} \left|\loc(t,x) - \tilde{\loc}_{m}(t,x)\right| = O\left((\log
n)^{\frac34}\; n^{-\frac14}\right)  \quad \text{a.s.} ,
\]
where $n = K 2^{2m}$.
\end{thmA}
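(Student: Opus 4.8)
The plan is to transfer the strong approximation for the paths, Corollary \ref{co:strong_d} in the one-dimensional case $d=1$, to an approximation for occupation measures, and then to control the difference between the occupation measure of $\tilde B_m$ against Lebesgue measure and the discrete local time $\tilde{\loc}_m$. First I would recall that for any bounded measurable $f$ on $\mathbb R$ the occupation time formula gives $\int_0^t f(W(s))\di s = \int_{\mathbb R} f(x)\loc(t,x)\di x$, and the analogous identity holds for the piecewise-linear path $\tilde B_m$ with its own occupation density, which I will relate to $\tilde{\loc}_m(t,x)$ defined in \eqref{eq:tasloct}. The key quantitative input is that, by Corollary \ref{co:strong_d} with $d=1$, $\sup_{0\le s\le K}|W(s)-\tilde B_m(s)| = O(m^{3/4}2^{-m/2})$ a.s.; combined with the known modulus of continuity of Brownian local time in the space variable (Hölder of every order $<1/2$, uniformly on $[0,K]\times\mathbb R$), this lets one compare $\loc(t,\cdot)$ with the occupation density of $\tilde B_m$ evaluated at a nearby point.

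The main steps, in order, are: (i) write $\tilde{\loc}_m(t,x) - \loc(t,x)$ as a sum of two terms, the discretization error between $2^{-m}\tilde\ell_m(t2^{2m},x2^m)$ and the true occupation density of the interpolated path $\tilde B_m$, and the path-approximation error between that density and $\loc(t,x)$; (ii) bound the discretization error by noting that on each unit time step the linearly interpolated walk sweeps an interval of length $2^{-m}$, so the two quantities differ by a controlled amount comparable to $2^{-m}$ times the number of steps near level $x$, which is itself $O(\tilde{\loc}_m)$, hence $O((\log n)^{3/4}n^{-1/4})$ after using the already-established crude bounds on $\tilde{\loc}_m$; (iii) bound the path-approximation error: since $W$ and $\tilde B_m$ are uniformly within $\varepsilon_m := O(m^{3/4}2^{-m/2})$, the occupation density of $\tilde B_m$ at $x$ lies between spatial averages of $\loc(t,\cdot)$ over $[x-\varepsilon_m, x+\varepsilon_m]$ up to endpoint corrections, and the Hölder-$\gamma$ modulus of $\loc$ (any $\gamma<1/2$) makes this difference $O(\varepsilon_m^{\gamma})$; choosing $\gamma$ close enough to $1/2$ one absorbs this into $O((\log n)^{3/4}n^{-1/4})$; (iv) take suprema over $(t,x)\in[0,K]\times\mathbb R$ — the range of $x$ is effectively bounded by $\sup_{[0,K]}|W| + 1$, which is a.s. finite — and invoke Borel–Cantelli to upgrade the probability estimates to an almost sure rate; (v) the uniform a.s. convergence and continuity of the limit then follow, since each $\tilde{\loc}_m(t,\cdot)$ is continuous (in fact the result identifies $\loc$ as the uniform limit of such functions).

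I expect the main obstacle to be step (iii): making the comparison between the discrete local time $\tilde{\loc}_m$ and the genuine local time $\loc$ sharp enough to get the stated exponent $n^{-1/4}$ rather than something weaker. The naive bound via the Hölder modulus of $\loc$ alone gives only $O(\varepsilon_m^{\gamma}) = O(m^{3\gamma/4}2^{-m\gamma/2})$ with $\gamma<1/2$, which is slightly worse than $n^{-1/4} = 2^{-m/2}$ up to logs; squeezing out the right power requires a more careful two-sided sandwiching of the occupation density of $\tilde B_m$ between local times of $W$ at the displaced levels together with the precise $(\log)^{3/4}$-type error terms coming from Corollary \ref{co:strong_d}, and likely also a Borel–Cantelli argument applied to a dyadic subsequence of levels. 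Everything else — the occupation time bookkeeping in step (ii) and the reduction to a bounded spatial range in step (iv) — is routine given the tools already assembled in the paper and in \cite{Szab1996, SzaSze2005}.
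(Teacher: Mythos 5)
First, a point of record: the paper does not prove this statement at all --- it is quoted verbatim from \cite{SzaSze2005} (``Then \cite{SzaSze2005} shows\dots''), so the comparison is really with the proof in that reference. That proof does not go through the Brownian local time at all; it works entirely inside the nested sequence $(\tilde B_m)$, using the fact that between two consecutive visits of $\tilde S_m$ to a site, the refinement $\tilde S_{m+1}$ returns to the corresponding site a Geometric$(\tfrac12)$-distributed number of times. Hence $2^{-(m+1)}\tilde\ell_{m+1}-2^{-m}\tilde\ell_m$ is $2^{-(m+1)}$ times a centered sum of $\tilde\ell_m=O(2^m\cdot\mathrm{polylog})$ i.i.d.\ geometric variables, which exponential (Bernstein/Hoeffding-type) bounds plus Borel--Cantelli make $O(2^{-m/2}\cdot\mathrm{polylog})$ uniformly; telescoping over refinement levels, together with control of the time lag \eqref{eq:tlag} and of the increments between neighbouring lattice sites, yields the rate $n^{-1/4}$ up to the logarithmic factor. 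The square-root cancellation in those geometric sums is precisely where the exponent $\tfrac14$ comes from.

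Your route cannot produce that exponent, and the obstacle you flag in step (iii) is in fact fatal rather than a matter of ``squeezing out'' logs. With $\varepsilon_m=O(m^{3/4}2^{-m/2})$ from Corollary \ref{co:strong_d}, sandwiching the occupation density of $\tilde B_m$ between spatial averages of $\loc(t,\cdot)$ over windows of width $\varepsilon_m$ and invoking the H\"older-$\gamma$ modulus of $\loc$ ($\gamma<\tfrac12$) gives $O(\varepsilon_m^{\gamma})\approx 2^{-m/4}=n^{-1/8}$, not $n^{-1/4}=2^{-m/2}$: you are off by a factor of two in the exponent, not ``slightly worse up to logs.'' No refinement of the sandwiching at displaced levels can repair this, because the loss comes from the genuine $|x-y|^{1/2}$ spatial roughness of $\loc$ itself. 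Step (ii) also hides a probabilistic estimate: the occupation density of the interpolated path at a non-lattice point is $2^{-m}$ times the number of \emph{crossings} of that level, and equating this with $\tilde{\loc}_m$ requires showing that upward and downward departures from a site balance to within $O(\sqrt{\tilde\ell_m})$ --- a fluctuation of exactly the critical order $2^{-m/2}$, not a deterministic bookkeeping error. Your outline does plausibly yield the qualitative uniform a.s.\ convergence (first half of the statement), but the stated rate requires the refinement-by-refinement argument sketched above, or something equivalent to it.
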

Similar statements hold for $\loc_m(t,x)$ computed from Skorohod embedded RW's $B_m$ as well.
Interestingly, the rate of convergence is the same for local time as for the approximation of Brownian
motion with the ``twist and shrink'' construction. While this rate is much weaker than the optimal
Koml\'os--Major--Tusn\'ady rate $(\log n) \; n^{-\frac12}$ in the case of BM, it just slightly differs
from the optimal $(\log n)^{\frac12} \; (\log\log n)^{\frac14} \; n^{-\frac14}$ in the case of local
time.

%%%%%%%%%%%%%%%%%%%%%%%%%%%%%%%%%%%%%%%%%%%%%%%%%%%%%%%%%%%%%%%%%%%

\section{A discrete It\^o's formula}
\label{sec:disc_Ito}

It is interesting that one can give discrete versions of It\^o's formula and of It\^o--Tanaka--Meyer
formula, which are purely algebraic identities, not assigning any probabilities to the terms. Despite
this, the usual It\^o's formula follows fairly easily in a proper probability setting.

Discrete It\^o formulas are not new. Apparently, the first such formula was given by Kudzma in 1982
\cite{Kud1982}. The elementary algebraic approach used in the present paper is different from that; it
was introduced by the author in 1989 \cite{Szab1990}.

First we need definitions of discrete line integrals and conservative vector fields on a grid. Fix an
initial point $a \in \mathbb{R}^d$ and step-size (mesh) $h > 0$. Consider the grid $\mathcal{G}(a,h)
:= a + h \mathbb{Z}^d$, and let $f = (f^1, \dots, f^d) : \mathcal{G}(a,h) \rightarrow \mathbb{R}^d$ be
a vector field on this grid. (Coordinates of a vector will always be denoted by superscripts.) Take an
arbitrary broken line (\emph{a discrete path}) $\gamma $ that goes through finitely many (not
necessarily distinct) oriented edges between adjoining vertices of the grid. A typical such edge is
$[x, x + \mu h e_j]$, where $x \in \mathcal{G}(a,h)$, $e_j$ $(1 \le j \le d)$ is a coordinate unit
vector and $\mu = \pm 1$. (The order of the two vertices is important!) A discrete path $\gamma $ is a
formal sum of such oriented edges (that is, a 1-chain):
\[
\gamma = \sum_{r=1}^n [x_r, x_r + \mu_r h e_{j_r}] \qquad (1 \le j_r \le d).
\]

By definition, the corresponding \emph{discrete path integral} (or \emph{trapezoidal sum}) of $f$ over
$\gamma $ is defined as
\[
T_{\gamma } \left(f, \gamma'_{0} \right) h := \frac{h}{2} \sum_{r=1}^n \mu_r  \left(f^{j_r}(x_r) +
f^{j_r}(x_r + \mu_r h e_{j_r})\right) .
\]
Here the symbol $\gamma '_0$ refers to the unit tangents of $\gamma $ along edges, and $(f,
\gamma'_0)$ denotes dot product. When it will be convenient, $T_{\gamma } \left(f(x), \-
\gamma'_{0}(x) \right) h$ will be written to show the dummy variable of the summation.

The ordinary path (line) integral of a vector field $f$ over a path $\gamma $ will be denoted by
$\int_{\gamma } (f, \gamma'_0) \di s$ or by $\int_{\gamma } f(x) \cdot \di x$, where $\di s$ refers to
the length element of $\gamma $.

The above definition of discrete path integral shows that the orientation of an edge is defined by the
order of its two vertices: it is positive if the edge goes increasingly in a coordinate and negative
in the opposite case. If $\gamma = \emptyset$, we define $T_{\gamma } \left(f, \gamma '_0 \right) h =
0$.

A vector field $f$ is called \emph{discrete conservative} on the grid $\mathcal{G}(a,h)$ if for any
$b, c \in \mathcal{G}(a,h)$ and for any discrete path $\gamma$ going from $b$ to $c$ through edges of
adjoining neighbor vertices of $\mathcal{G}(a,h)$, the discrete path integral does not depend on the
path $\gamma $, it depends only on the initial point $b$ and endpoint $c$. In this case the notation
$T_{b}^c \left(f, \gamma '_0 \right) h$ will be used for the trapezoidal sum. Clearly, $f$ is discrete
conservative if and only if $T_{b}^c \left(f, \gamma '_0 \right) h = 0$ whenever $b = c$, that is, the
path $\gamma $ is closed.

When $f$ is discrete conservative, one can define a \emph{discrete potential} $g: \mathcal{G}(a,h)
\rightarrow \mathbb{R}$ by the formula $g(x) := T_{a}^x \left(f, \gamma '_0 \right) h$. Then for any
points $b$ and $c$ in the grid, and for any discrete path $\gamma $ connecting them, one has
$T_{\gamma } \left(f, \gamma'_{0} \right) h = g(c) - g(b)$.

The following discrete It\^o's formula (which is a simple algebraic identity) already appeared in
\cite[Section 5]{Szab1990} in the two-dimensional case. It is based on the principle that though our
random walk is ``diagonal'', constructed from independent one-dimensional random walks, the discrete
integrals below go ``parallel to the coordinate axes'', as in a standard continuous It\^o's formula.
Also, the main object in our discrete formula is the ``integrand'' $f$ in the stochastic sum, which
corresponds to the gradient of a scalar field in a standard continuous It\^o's formula. That may
explain why we suppose that $f$ be discrete conservative. These methods make it convenient to deduce
important continuous formulae from the discrete ones.

\begin{lem}\label{le:discrete_Ito}
Take $a \in \mathbb{R}^d$, step $h > 0$, and a discrete conservative time-dependent vector field $f =
(f^1, \dots , f^d) : h^2 \mathbb{Z}_+ \times \mathcal{G}(a,h) \rightarrow \mathbb{R}^d$. Consider a
sequence $X_r = (X^1_{r}, \dots , X^d_{r})$ ($r \ge 1$), where $X^j_{r} = \pm 1$. Define partial sums
$S_0=a $, $S_n = a+h(X_1+\cdots +X_n)$ ($n \ge 1$) and discrete time instants $t_r = r h^2$ ($0 \le r
\le n$). Assume that the steps of $(S_n)$ are performed in time steps $h^2$. Then the following
equalities hold:
\begin{multline} \label{eq:disc_Strat}
T_{x=S_0}^{S_n} \left(f(t_n,x), \gamma '_0(x) \right) h \\
= \sum_{r=1}^{n}  T_{x=S_0}^{S_r} \left(\left\{f(t_r,x) - f(t_{r-1},x)\right\}, \gamma
'_0(x) \right) h   \\
+ \sum_{r=1}^{n} \sum_{j=1}^{d} \frac{f^j \left(t_{r-1}, S_{r-1} + \sum_{i=1}^{j-1} h X_{r}^{i} e_i
\right) + f^j \left(t_{r-1}, S_{r-1} + \sum_{i=1}^{j} h X_{r}^{i} e_i \right)}{2} h X_r^j
\end{multline}
(discrete Stratonovich formula). Alternatively,
\begin{multline} \label{eq:disc_Ito}
T_{x=S_0}^{S_n} \left(f(t_n,x), \gamma '_0(x) \right) h \\
= \sum_{r=1}^{n}  T_{x=S_0}^{S_r} \left(\left\{f(t_r,x) - f(t_{r-1},x)\right\}, \gamma
'_0(x) \right) h   \\
+ \sum_{r=1}^{n} \sum_{j=1}^{d} f^j \left(t_{r-1}, S_{r-1} +
\sum_{i=1}^{j-1} h X_{r}^{i} e_i \right) h X_r^j  \\
+  \frac12 \sum_{r=1}^n \sum_{j=1}^{d} \frac{f^j \left(t_{r-1}, S_{r-1} + \sum_{i=1}^{j} h X_{r}^{i}
e_i \right) - f^j \left(t_{r-1}, S_{r-1} + \sum_{i=1}^{j-1} h X_{r}^{i} e_i \right)} {h X_r^j} h^2
\end{multline}
(discrete It\^o's formula).
\end{lem}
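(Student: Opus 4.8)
The plan is to prove both identities by a double telescoping — once over the time steps $r$ and once over the coordinate directions $j$ — followed by a purely algebraic rewriting of the midpoint weights that turns the Stratonovich form into the It\^o form. Since $f(t_r, \cdot)$ is discrete conservative on $\mathcal{G}(a,h)$ for each fixed time $t_r$, the first step is to introduce its discrete potential based at $S_0$, namely $g_r(x) := T_{S_0}^{x}\left(f(t_r,\cdot), \gamma'_0\right) h$, so that the left-hand side of both \eqref{eq:disc_Strat} and \eqref{eq:disc_Ito} is exactly $g_n(S_n)$, while $g_r(S_0) = 0$ for every $r$.

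Next I would telescope in time, using $g_0(S_0) = 0$, to write
\[
g_n(S_n) = \sum_{r=1}^n \left( g_r(S_r) - g_{r-1}(S_{r-1}) \right) ,
\]
and split each summand as $\left(g_r(S_r) - g_{r-1}(S_r)\right) + \left(g_{r-1}(S_r) - g_{r-1}(S_{r-1})\right)$. By linearity of the trapezoidal sum in the field, the first bracket equals $T_{S_0}^{S_r}\left(\{f(t_r,\cdot) - f(t_{r-1},\cdot)\}, \gamma'_0\right) h$, which, summed over $r$, is precisely the first sum on the right-hand side of both formulas. The second bracket is the increment of the potential $g_{r-1}$ along the diagonal step $S_{r-1} \to S_r = S_{r-1} + h X_r$; since $g_{r-1}$ is path independent, I would evaluate it along the axis-parallel staircase through the grid points $P_0 := S_{r-1}$ and $P_j := S_{r-1} + \sum_{i=1}^{j} h X_r^i e_i$ ($j = 1, \dots, d$), noting that $P_d = S_r$ and that each $[P_{j-1}, P_j]$ is a single grid edge because $X_r^j = \pm 1$. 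This is exactly where the ``diagonal walk, axis-parallel integrals'' principle of the paper enters. Applying the definition of the trapezoidal sum to each edge $[P_{j-1}, P_{j-1} + h X_r^j e_j]$ gives $\frac{h}{2} X_r^j\left(f^j(t_{r-1}, P_{j-1}) + f^j(t_{r-1}, P_j)\right)$; summing over $j$ and then over $r$ yields the midpoint stochastic sum in \eqref{eq:disc_Strat}, which proves the discrete Stratonovich formula.

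For the discrete It\^o formula I would only need the algebraic identity, valid because $(X_r^j)^2 = 1$ and hence $h X_r^j = h^2/(h X_r^j)$,
\[
\frac{f^j(t_{r-1}, P_{j-1}) + f^j(t_{r-1}, P_j)}{2}\, h X_r^j = f^j(t_{r-1}, P_{j-1})\, h X_r^j + \frac12\,\frac{f^j(t_{r-1}, P_j) - f^j(t_{r-1}, P_{j-1})}{h X_r^j}\, h^2 ,
\]
applied term by term inside the double sum obtained above; re-summing over $r$ and $j$ then gives \eqref{eq:disc_Ito}.

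There is no analytic obstacle here, since both claims are finite algebraic identities; the real work is careful bookkeeping. The one genuine subtlety, which I would flag explicitly, is the sign handling in the trapezoidal sum when a step has $X_r^j = -1$: one must check directly from the definition $T_\gamma(f, \gamma'_0) h = \frac{h}{2}\sum_r \mu_r\left(f^{j_r}(x_r) + f^{j_r}(x_r + \mu_r h e_{j_r})\right)$ that traversing an edge in the negative coordinate direction contributes the midpoint value with the correct sign, so that the staircase evaluation produces the factor $h X_r^j$ — and not $h|X_r^j|$ — in the stochastic sums. It is also worth remarking that the intermediate vertices $P_j$ lie in $\mathcal{G}(a,h)$ and consecutive ones are neighbors, so that the staircase is an admissible discrete path and discrete conservativity genuinely applies.
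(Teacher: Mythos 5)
Your proof is correct and follows essentially the same route as the paper: the same time-telescoping split into a time-increment trapezoidal sum plus a spatial increment, the same use of discrete conservativity to evaluate the spatial increment along the axis-parallel staircase from $S_{r-1}$ to $S_r$, and the same identity $1/X_r^j = X_r^j$ to pass from the Stratonovich to the It\^o form. Phrasing the argument via the potential $g_r$ is only a cosmetic repackaging of the paper's direct manipulation of trapezoidal sums.
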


\begin{proof}
Algebraically,
\begin{eqnarray*}
\lefteqn{T_{x=S_0}^{S_r} \left(f(t_r,x), \gamma '_0(x) \right) h
- T_{x=S_0}^{S_{r-1}} \left(f(t_{r-1},x), \gamma '_0(x) \right) h} \\
& = & T_{x=S_0}^{S_r} \left(f(t_r,x), \gamma '_0(x) \right) h -
T_{x=S_0}^{S_{r}} \left(f(t_{r-1},x), \gamma '_0(x) \right) h \\
& + & T_{x=S_0}^{S_r} \left(f(t_{r-1},x), \gamma '_0(x) \right) h - T_{x=S_0}^{S_{r-1}}
\left(f(t_{r-1},x), \gamma '_0(x) \right) h .
\end{eqnarray*}
Using the assumption that $f$ is discrete conservative, we get that
\begin{eqnarray*}
\lefteqn{T_{x=S_0}^{S_r} \left(f(t_{r-1},x), \gamma '_0(x) \right) h - T_{x=S_0}^{S_{r-1}}
\left(f(t_{r-1},x), \gamma '_0(x) \right)
h} \\
& = & T_{x=S_{r-1}}^{S_r} \left(f(t_{r-1},x), \gamma '_0(x) \right) h
\end{eqnarray*}
and
\begin{eqnarray} \label{eq:disc_Ito_proof}
\lefteqn{T_{x=S_{r-1}}^{S_r} \left(f(t_{r-1},x), \gamma '_0(x) \right) h
} \\
& = & \sum_{j=1}^{d} \frac{f^j \left(t_{r-1}, S_{r-1} + \sum_{i=1}^{j-1} h X_{r}^{i} e_i \right) + f^j
\left(t_{r-1}, S_{r-1} + \sum_{i=1}^{j} h X_{r}^{i} e_i \right)}{2} h X_r^j
\nonumber \\
& = & \sum_{j=1}^{d} f^j \left(t_{r-1}, S_{r-1} +
\sum_{i=1}^{j-1} h X_{r}^{i} e_i \right) h X_r^j \nonumber \\
& + &  \frac12 \sum_{j=1}^{d} \frac{f^j \left(t_{r-1}, S_{r-1} + \sum_{i=1}^{j} h X_{r}^{i} e_i
\right) - f^j \left(t_{r-1}, S_{r-1} + \sum_{i=1}^{j-1} h X_{r}^{i} e_i \right)} {h X_r^j} h^2 .
\nonumber
\end{eqnarray}
The first equality follows from the fact that if $X_r^j=1$, one has a positively oriented edge, while
if $X_r^j=-1$, one has a negatively oriented edge in the trapezoidal sum. Then the second equality
follows since $1/X_r^j = X_r^j$. Summing up for $r=1, \dots, n$, the sum on the left telescopes, and
from the two equalities one obtains the two formulae, respectively.
\end{proof}

One can introduce \emph{partial local times} $\loc^{\mu}_h (t_n,x)$ ($n \ge 0$) of the series $(S_n)$
with spatial step $h>0$, time step $h^2$, and $\mu \in \{1, -1 \}^d$: $\loc^{\mu}_h(0,x) := 0$ and
\begin{equation}\label{eq:DPLT}
\loc^{\mu}_h(t_n,x) := h \: \#\{j : 0 \le j < n, \: S_{j} = x, \: S_{j+1} = x + h \mu  \} ,
\end{equation}
where $n \ge 1$ and $x \in \mathcal{G}(a,h)$. The (total) \emph{local time} is
\begin{equation}\label{eq:DLT}
\loc_h(t_n,x) := \sum_{\mu \in \{1, -1 \}^d} \loc^{\mu }_h(t_n,x) = h \: \#\{j : 0 \le j < n, \: S_{j}
= x\}.
\end{equation}
Here our convention differs from the usual one: time $0$ is counted, but time $n$ is not. The reason
is that this better fits the discrete formula below.

\begin{lem}\label{le:disc_Ito_Tan_Mey}
With the same assumptions as above in Lemma \ref{le:discrete_Ito}, except that the vector field $f$
does not depend on time, $f : \mathcal{G}(a,h) \rightarrow \mathbb{R}^d$, one also has
\begin{eqnarray} \label{eq:disc_Ito_Tan_Mey}
\lefteqn{T_{x=S_0}^{S_n} \left(f(x), \gamma '_0(x) \right) h}
\\
& = & \sum_{r=1}^{n} \sum_{j=1}^{d} f^j \left(S_{r-1} +
\sum_{i=1}^{j-1} h X_{r}^{i} e_i \right) h X_r^j \nonumber \\
& + & \frac{1}{2} \sum_{x \in a + h \mathbb{Z}^d} \sum_{\mu
\in \{1, -1\}^d} \loc^{\mu}_h(t_n, x)  \nonumber \\
& & \times \sum_{j=1}^{d} \mu ^j \left\{ f^j \left(x + \sum_{i=1}^{j} h \mu ^i e_i \right) - f^j
\left(x + \sum_{i=1}^{j-1} h \mu ^i e_i \right)\right\} \nonumber
\end{eqnarray}
(discrete It\^o--Tanaka--Meyer formula).
\end{lem}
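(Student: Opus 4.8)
The plan is to obtain \eqref{eq:disc_Ito_Tan_Mey} as an immediate specialization of the discrete It\^o formula \eqref{eq:disc_Ito} of Lemma~\ref{le:discrete_Ito}, followed by a re-indexing of the second-order term so that it turns into a sum over grid points weighted by the partial local times \eqref{eq:DPLT}.

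First I would apply Lemma~\ref{le:discrete_Ito} with the given vector field $f$ regarded as time-independent. Then $f(t_r,\cdot)=f(t_{r-1},\cdot)=f(\cdot)$ for every $r$, so the first sum on the right of \eqref{eq:disc_Ito} — the one built from the increments $f(t_r,x)-f(t_{r-1},x)$ — vanishes term by term, leaving
$$T_{x=S_0}^{S_n}\!\left(f(x),\gamma '_0(x)\right)h=\sum_{r=1}^{n}\sum_{j=1}^{d}f^j\!\left(S_{r-1}+\sum_{i=1}^{j-1}hX_r^i e_i\right)hX_r^j+\frac12\sum_{r=1}^{n}\sum_{j=1}^{d}\frac{f^j\!\left(S_{r-1}+\sum_{i=1}^{j}hX_r^i e_i\right)-f^j\!\left(S_{r-1}+\sum_{i=1}^{j-1}hX_r^i e_i\right)}{hX_r^j}\,h^2 .$$
The first double sum here is already the stochastic term appearing in \eqref{eq:disc_Ito_Tan_Mey}, so only the last double sum needs to be rewritten.

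For that, I would use $1/X_r^j=X_r^j$ (valid since $X_r^j=\pm1$) to turn the generic summand into $\frac12\,X_r^j\{f^j(S_{r-1}+\sum_{i=1}^{j}hX_r^i e_i)-f^j(S_{r-1}+\sum_{i=1}^{j-1}hX_r^i e_i)\}\,h$, whose inner $j$-sum depends on $r$ only through the pair $(S_{r-1},X_r)\in\mathcal{G}(a,h)\times\{1,-1\}^d$. I would then group the indices $r\in\{1,\dots,n\}$ by the value of this pair: for fixed $x$ and $\mu$ the number of $r$ with $S_{r-1}=x$ and $X_r=\mu$ equals $\#\{j:0\le j<n,\ S_j=x,\ S_{j+1}=x+h\mu\}$ via the substitution $r=j+1$, and this count is $\loc^\mu_h(t_n,x)/h$ by \eqref{eq:DPLT}. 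Substituting, and noting that the factor $h$ from $hX_r^j$ cancels the $1/h$ in $\loc^\mu_h(t_n,x)/h$, the correction term becomes $\frac12\sum_{x\in a+h\mathbb{Z}^d}\sum_{\mu\in\{1,-1\}^d}\loc^\mu_h(t_n,x)\sum_{j=1}^{d}\mu^j\{f^j(x+\sum_{i=1}^{j}h\mu^i e_i)-f^j(x+\sum_{i=1}^{j-1}h\mu^i e_i)\}$, which is exactly the local-time term of \eqref{eq:disc_Ito_Tan_Mey}.

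Since everything is a finite algebraic identity, there is no genuine analytic difficulty; the only step that requires care is this last re-indexing, where one must make sure the counting convention of \eqref{eq:DPLT} (initial time counted, terminal time not) is consistent with the range $1\le r\le n$ in \eqref{eq:disc_Ito}, and that the powers of $h$ match up. Once the change of summation index $r=j+1$ is written out explicitly, both checks are immediate, so the proof is quite short.
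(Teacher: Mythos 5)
Your proposal is correct and follows essentially the same route as the paper: the paper likewise starts from the one-step identity in the proof of Lemma~\ref{le:discrete_Ito} (equivalently, from \eqref{eq:disc_Ito} with the time-increment term vanishing), rewrites the second-order correction at step $r$ as a sum over $(x,\mu)$ with the indicator $\mathbf{1}_{\{S_{r-1}=x,\,S_r=x+h\mu\}}$, and sums over $r$ to produce $\loc^{\mu}_h(t_n,x)$. Your bookkeeping of the powers of $h$ and of the counting convention in \eqref{eq:DPLT} is accurate, so nothing is missing.
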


\begin{proof}
Continuing  (\ref{eq:disc_Ito_proof}) in the proof of the previous lemma,
\begin{eqnarray*}
\lefteqn{T_{x=S_{r-1}}^{S_r} \left(f(x), \gamma '_0(x) \right) h
} \\
& = & \sum_{j=1}^{d} f^j \left(S_{r-1} + \sum_{i=1}^{j-1} h X_{r}^{i}
e_i \right) h X_r^j  \\
& + &  \frac{1}{2} \sum_{x \in a + h \mathbb{Z}^d} \sum_{\mu
\in \{1, -1\}^d} h \ind{\{S_{r-1}=x, S_r=x+h \mu \}} \\
& & \times \sum_{j=1}^{d} \mu ^j \left\{ f^j \left(x + \sum_{i=1}^{j} h \mu ^i e_i \right) - f^j
\left(x + \sum_{i=1}^{j-1} h \mu ^i e_i \right) \right\} .
\end{eqnarray*}
Again, summing up for $r=1, \dots, n$, the sum on the left telescopes, and on the right one obtains
the asserted formula.
\end{proof}

%%%%%%%%%%%%%%%%%%%%%%%%%%%%%%%%%%%%%%%%%%%%%%%%%%%%%%%%%%%%%%%%%%%

\section{Constructing discrete conservative vector \\ fields on a
planar grid} \label{sec:grid}

For the sake of simplicity, from now on only the planar case ($d=2$) will be discussed, as this is the
case that will be used in the sequel. The problem that we consider in this section is that given a
differentiable scalar field $g$ in the plane, its gradient $\nabla g$ is \emph{not} a discrete
conservative vector field on a grid $\mathcal{G}(a,h) = a + h \mathbb{Z}^2$ in general. We want to
modify $\nabla g$ so that the resulting vector field $f$ be discrete conservative on the grid, but
still do not differ much from $\nabla g$.

Let us call any $E := [x, x+he_1] \times [x, x+he_2]$, $x \in \mathcal{G}(a,h)$ an \emph{elementary
rectangle of the grid}. It is clear by the previous definitions that a vector field $f$ is discrete
conservative on $\mathcal{G}(a,h)$ if and only if for the counterclockwise directed boundary $\gamma =
\partial E$ of any elementary rectangle, the \emph{discrete curl} of $f$:
\begin{eqnarray} \label{eq:curl}
\lefteqn{(\curl_h f)(x) := \frac{1}{h^2} T_{\gamma } (f, \gamma'_0)
h} \\
& = & \frac{1}{2h} \left\{f^1(x^1,x^2) +
f^1(x^1+h,x^2) + f^2(x^1+h,x^2) \right.  \nonumber \\
& + &  f^2(x^1+h,x^2+h) -  f^1(x^1+h,x^2+h) \nonumber \\
& - & \left. f^1(x^1,x^2+h) - f^2(x^1,x^2+h) - f^2(x^1,x^2) \right\} \nonumber
\end{eqnarray}
is zero. Observe that a discrete curl is a trapezoidal sum over the edges of an elementary rectangle,
divided by $h^2$, the area of the rectangle.

Starting with a scalar field $g \in C^3(\mathbb{R}^2)$, we introduce the following modification
algorithm to obtain a discrete conservative vector field $f$ on $\mathcal{G}(a,h)$. By translation, we
may assume that $a=0$. First we set $f^j(x^1,x^2) = (D_j g)(x^1,x^2)$ whenever $x^1=0$ or $x^2=0$.
($D_j$ denotes partial differentiation with respect to $x^j$.)

Let us consider now elementary rectangles of the grid in the first quadrant. We proceed inductively
with layers of rectangles whose lower left (SW) vertex is $(x^1,x^2)$, $x^1 \wedge x^2 = r h$,
$r=0,1,2,\dots$. Because of symmetry, it is enough to describe the algorithm when $x^2 \le x^1$. In
the $r$th layer we proceed as $x^1 = jh$, $j=0,1,2,\dots$. The next rectangle inherits the values of
$f$ defined on the vertices of previous rectangles, except for the upper right (NE) vertex, which is
called the \emph{``new'' vertex}. For this, compute the \emph{modified discrete curl}
\begin{eqnarray*}
\lefteqn{(\curl_h^g f)(x) := \frac{1}{2h} \left\{f^1(x^1,x^2) +
f^1(x^1+h,x^2) + f^2(x^1+h,x^2)  \right. } \\
& + & (D_2g)(x^1+h,x^2+h) - (D_1g)(x^1+h,x^2+h) \\
& - & \left. f^1(x^1,x^2+h) - f^2(x^1,x^2+h) - f^2(x^1,x^2) \right\},
\end{eqnarray*}
(which is not zero in general) and for $j=1,2$ set
\begin{equation}\label{eq:field_modif}
f^j(x^1+h,x^2+h) = (D_jg)(x^1+h,x^2+h) + (-1)^{j-1} h \: (\curl_h^g f)(x)
\end{equation}
at the ``new'' vertex. It is clear that the so defined $f$ is discrete conservative in the first
quadrant. Observe that the two modification terms $(-1)^{j-1} h \: (\curl_h^g f)(x)$ have the same
absolute value, and in $(\curl_h f)(x)$ they both have minus sign.

In other quadrants the situation is analogous to the case of the first quadrant, but the ``new''
vertex is the upper left in the second, the lower left in the third, and the lower right in the fourth
quadrant. The signs of the modification terms $\pm h \: (\curl_h^g f)(x)$ in (\ref{eq:field_modif})
have to be changed accordingly as well.

It remains to see how large the difference between $f$ and $\nabla g$ is. For the sake of simplicity,
consider only points in the first quadrant, points in other quadrant being analogous. We claim that
these errors accumulate only diagonally.

In fact, if one considers two neighbor rectangles with a common edge, then we can see from
(\ref{eq:curl}) and (\ref{eq:field_modif}) that the discrete curl of the ``new'' rectangle (which is
right or up from the ``old'') does not inherit the modification terms $\pm h \: (\curl_h^g f)(x)$ of
the ``old'' rectangle. The reason is that the modification terms of the ``old'' rectangle cancel in
the ``new'' curl; out of the two pairs of edges joining at the NE vertex of the ``old'' rectangle, one
pair of parallel edges has opposite directions, so the sign of the modification term changes, while
the other pair has identical directions, so the sign of the modification term remains the same. On the
other hand, in the case of two rectangles with a single common vertex (so which are in diagonal
position), the curl of the ``new'' (NE) rectangle does inherit the modification terms of the ``old''
(SW) rectangle, because both pairs of parallel edges have opposite directions.

Thus (\ref{eq:field_modif}) implies that for any $n \ge 1$ and $R
> 0$,
\begin{eqnarray} \label{eq:error_prop}
\lefteqn{\sup_{|x^1| \wedge |x^2| = nh; |x| \le R} |f^j(x) -
(D_jg)(x)|} \\
& \le & nh \sup_{|x| \le R} |(\curl_h \nabla g)(x)| \le \frac{R}{\sqrt{2}} \sup_{|x| \le R} |(\curl_h
\nabla g)(x)| . \nonumber
\end{eqnarray}

Thus the error estimation reduces to an estimate between the ``true'' $\curl \nabla g$ $:= D_{12} g -
D_{21}g = 0$ and the discrete $\curl_h \nabla g$. Or, more precisely, between the ``true'' path
integral $\int_{\gamma } (\nabla g, \gamma '_0) ds = 0$ and the discrete path integral $T_{\gamma }
(\nabla g, \gamma '_0) h$ of the conservative vector field $\nabla g$ over the boundary $\gamma $ of
an elementary rectangle.

Now, as it is well-known, if $\phi \in C^2(\mathbb{R})$, the error between the integral and the
trapezoidal area of $\phi$ on $[x,x+h]$ is
\begin{equation}\label{eq:trapez_error}
\int_{x}^{x+h} \phi(u) du - h \frac{\phi(x)+\phi(x+h)}{2} = -\frac{h^3}{12} \phi''(x+sh),
\end{equation}
where $0 \le s \le 1$.

When $E := [x, x+he_1] \times [x, x+he_2]$ is an elementary rectangle and $\gamma  = \partial E$, it
follows that for any $g \in C^3(\mathbb{R}^2)$ and for any $x \in \mathbb{R}^2$,
\begin{eqnarray*}
(\curl_h \nabla g)(x) & = & \frac{1}{h^2} T_{\gamma } (\nabla g,
\gamma'_0) h \\
& = & \frac{1}{h^2} \left\{ T_{\gamma } (\nabla g, \gamma'_0) h -
\int_{\gamma } (\nabla g, \gamma'_0) \di s \right\} \\
& = & \frac{h}{12} \left\{ (D^3_{1}g)(x^1+s_1h,x^2) +
(D^3_{2}g)(x^1+h,x^2+s_2h) \right. \\
& & - \left. (D^3_{1}g)(x^1+s_3h,x^2+h) - (D^3_{2}g)(x^1,x^2+s_4h) \right\} ,
\end{eqnarray*}
where $0 \le s_j \le 1$. This implies that
\begin{equation} \label{eq:error_curl}
|(\curl_h \nabla g)(x)| \le \frac{1}{6} h \: \epsilon_E,
\end{equation}
where $\epsilon_E := \sup_{x, y \in E} \{|(D^3_{1}g)(x) - (D^3_{1}g)(y)|, |(D^3_{2}g)(x) -
(D^3_{2}g)(y)|\}$, which goes to zero as $h \to 0^+$.

Combining (\ref{eq:error_prop}) and (\ref{eq:error_curl}), we obtain the following
\begin{lem}\label{le:error_in_field}
Let $a \in \mathbb{R}^2$, $h > 0$, $R > 0$, and $g \in C^3(\mathbb{R}^2)$. Define $\epsilon(h) =
\epsilon_g(h,a,R)$ by
\begin{equation}\label{eq:epsh}
\epsilon(h) := \sup \{|(D^3_{j}g)(x) - (D^3_{j}g)(y)| : |x-a|, |y-a| \le R+h; |x-y| \le h\sqrt{2};
j=1,2 \}
\end{equation}
(which goes to zero as $h \to 0^+$). Let $f$ denote the discrete conservative vector field on the grid
$\mathcal{G}(a,h) = a + h \mathbb{Z}^2$, obtained from $\nabla g$ by the modification algorithm
described above. Then
\[
\sup_{|x-a| \le R} |f(x) - (\nabla g)(x)| \le \frac{R}{6} \: h \: \epsilon(h) .
\]
\end{lem}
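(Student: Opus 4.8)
The plan is to assemble the proof essentially by combining the two displayed estimates \eqref{eq:error_prop} and \eqref{eq:error_curl} that were derived in the discussion preceding the lemma, after reducing to the normalized case $a = 0$ by translation. So the first step is to observe that translating the grid and the scalar field by $-a$ changes nothing essential: the modification algorithm is translation-covariant, and the quantity $\epsilon(h)$ in \eqref{eq:epsh} is manifestly translation-invariant once we recenter at the origin. Hence it suffices to prove the bound with $a = 0$, $R > 0$ fixed, and $f$ the discrete conservative field produced by the modification algorithm in all four quadrants.

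Next I would make the error-propagation step fully precise. The heuristic argument in the text shows that the modification terms $\pm h\,(\curl_h^g f)(x)$ introduced at a ``new'' vertex cancel in the discrete curl of any axis-adjacent rectangle but are inherited by the diagonally adjacent (NE from SW, etc.) rectangle; consequently the total discrepancy $f^j(x) - (D_j g)(x)$ at a point $x$ with $|x^1| \wedge |x^2| = nh$ is a telescoping sum of at most $n$ modification terms, one per diagonal layer between the axes and $x$. I would formalize this by induction on the layer index $r$ (equivalently on $n$), using \eqref{eq:field_modif} at each step: at a new vertex the error equals the previously accumulated diagonal error plus $(-1)^{j-1} h\,(\curl_h^g f)(x)$, and since $(\curl_h^g f)(x)$ differs from $(\curl_h \nabla g)(x)$ only by substituting the already-modified values $f^j$ at the three old vertices for $(D_j g)$ there, one controls it by $|(\curl_h \nabla g)(x)|$ plus lower-order inherited terms — which is exactly what \eqref{eq:error_prop} records, together with the geometric observation that $nh \le R/\sqrt2$ whenever $|x| \le R$ (the $\ell^\infty$-corner condition $|x^1|\wedge|x^2| = nh$ forces $|x| \ge nh\sqrt2$).

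For the curl estimate I would invoke \eqref{eq:trapez_error}, the standard $C^2$ trapezoidal-rule remainder, applied to each of the four edges of an elementary rectangle $E = [x, x+he_1]\times[x, x+he_2]$: writing $T_\gamma(\nabla g, \gamma_0')h - \int_\gamma (\nabla g,\gamma_0')\,ds$ edge by edge gives the four third-derivative terms $\pm\frac{h^3}{12}(D_1^3 g)(\cdot)$, $\pm\frac{h^3}{12}(D_2^3 g)(\cdot)$ displayed in the text, and since $\int_\gamma(\nabla g,\gamma_0')\,ds = 0$ (gradient field, closed curve) this equals $h^2(\curl_h\nabla g)(x)$. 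Pairing the two $(D_1^3 g)$ terms and the two $(D_2^3 g)$ terms and bounding each difference by the modulus of continuity gives $|(\curl_h\nabla g)(x)| \le \frac16 h\,\epsilon_E$, and since all relevant points lie within distance $R + h$ of $a$ and within $h\sqrt2$ of one another, $\epsilon_E \le \epsilon(h)$ uniformly over such rectangles. Combining: $\sup_{|x-a|\le R}|f(x) - \nabla g(x)| \le \frac{R}{\sqrt2}\cdot\frac16 h\,\epsilon(h) \le \frac{R}{6}h\,\epsilon(h)$.

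The main obstacle is the bookkeeping in the error-propagation step: one must verify carefully that the modification terms really do telescope along a single diagonal and that the ``inherited'' contributions to $(\curl_h^g f)(x)$ at the three old vertices of each new rectangle do not blow up the constant — i.e., that replacing $\nabla g$ by the already-modified $f$ at those vertices perturbs the bound by at most the same geometric series, so that the final constant is still linear in $R$ and not, say, exponential in $n$. Once the sign pattern in \eqref{eq:field_modif} and \eqref{eq:curl} is tracked honestly (the two modification components have equal magnitude and both enter $\curl_h$ with a minus sign, so consecutive diagonal rectangles add rather than amplify), this reduces to the clean estimate \eqref{eq:error_prop}, and the rest is the routine trapezoidal remainder. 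The treatment of the other three quadrants is identical after the obvious relabeling of the ``new'' vertex and the corresponding sign changes, so it needs only a remark, not a separate argument.
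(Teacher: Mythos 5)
Your proposal is correct and follows essentially the same route as the paper, whose proof of this lemma is precisely the combination of the diagonal error-propagation bound \eqref{eq:error_prop} with the trapezoidal curl estimate \eqref{eq:error_curl}, plus the observation that $|x^1|\wedge|x^2|=nh$ and $|x|\le R$ force $nh\le R/\sqrt2$. The only cosmetic imprecision is that the discrepancy at a given vertex is a \emph{single} modification term $(-1)^{j-1}h\,(\curl_h^g f)$, with the accumulation of at most $n$ true curls happening inside $\curl_h^g f$ along the diagonal chain rather than as a separate additive error at the vertex; this does not affect the resulting bound.
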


This lemma expresses the fact that the error of the above modification algorithm even when divided by
$h$ can be made uniformly arbitrary small on any bounded planar set by choosing a small enough $h$.

%%%%%%%%%%%%%%%%%%%%%%%%%%%%%%%%%%%%%%%%%%%%%%%%%%%%%%%%%%%%%%%%%%%

\section{Planar It\^o's formula as an almost sure limit of the
discrete formula} \label{sec:Ito}

Let us apply now the planar ($d=2$) case of the discrete It\^o's formula (\ref{eq:disc_Ito}) to a
random, time-dependent scalar field $g : \Omega \times \mathbb{R}_+ \times \mathbb{R}^2 \to
\mathbb{R}$, $g(\omega,t,x)$, which is measurable in $\omega$ for all $(t,x)$, and is $C^{1,3}$ in
$(t,x)$ for almost all $\omega $.

More exactly, fixing $a \in \mathbb{R}^2$ and taking $m=0,1,2,\dots$ and $h=2^{-m}$, construct first a
sequence of discrete conservative vector fields $f_m(\omega, t, x)$ on the grids
$\mathcal{G}(a,2^{-m})$ from $(\nabla g)(\omega, t, x)$, for each $\omega $ and $t$ fixed, by the
modification algorithm discussed in the previous section. (In this paper $\nabla g = (D_1 g, D_2 g)$
denotes gradient of $g(\omega , t, x)$ with respect to $x$ and $D_t g$ its derivative with respect to
$t$.) Fixing a bounded time interval $[0, K]$, by a slight generalization of Lemma
\ref{le:error_in_field}, for any $\omega $ fixed we get that
\begin{equation}\label{eq:fm_g}
\sup_{t \in[0, K]} \sup_{|x-a| \le R}  |f_m(\omega, t, x) - (\nabla g)(\omega, t, x)| \le \frac{R}{6}
\: 2^{-m} \: \epsilon_K^0(2^{-m}) ,
\end{equation}
where
\[
\epsilon_K^0(h) := \sup \{|(D^3_{j}g)(\omega,t,x) - (D^3_{j}g)(\omega,t,y)| \}
\]
and the supremum is taken for all $ |x-a|, |y-a| \le R+h, |x-y| \le h\sqrt{2}$, $t \in [0, K]$ and
$j=1,2$. Then $\epsilon_K^0(2^{-m}) \to 0$ as $m \to \infty$.

Moreover, it is clear from the modification algorithm (\ref{eq:field_modif}) that at any point of the
grid, $f_m$ differs from $\nabla g$ by a finite linear combination of $D_j g$ values, so $f_m(\omega,
t, x)$ is continuously differentiable as a function of $t$, like $\nabla g$. Moreover, by taking
derivative of (\ref{eq:field_modif}) with respect to $t$, we obtain
\begin{multline*}
(D_t f_m^j)(\omega, t, x^1+h,x^2+h) = (D_j D_t g)(\omega, t, x^1+h,x^2+h) \\
+ (-1)^{j-1} h \: (\curl_h^{D_t g} (D_t f_m))(x) .
\end{multline*}
By the same argument that lead to Lemma \ref{le:error_in_field}, for any fixed $\omega$ it follows
that
\begin{equation}\label{eq:field_modif_t}
\sup_{t \in  [0, K]} \sup_{|x-a| \le R}  |(D_t f_m)(\omega, t, x) - (D_t \nabla g)(\omega, t, x)| \le
\frac{R}{6} \: h \: \epsilon_K^1(h) ,
\end{equation}
where
\[
\epsilon_K^1(h) := \sup \{|(D^3_{j} D_t g)(\omega, t, x) - (D^3_{j} D_t g)(\omega, t, y)|  \}
\]
and the supremum is taken for all $|x-a|, |y-a| \le R+h, |x-y| \le h\sqrt{2}$, $t \in [0, K]$ and
$j=1,2$. Then $\epsilon_K^1(h) \to 0$ as $h \to 0^+$.

Second, start with a planar Brownian motion $(W(t))_{t \in \mathbb{R}_+}$ constructed as in Section
\ref{sec:Pre}, but shifted so that $W(0)=a$. Then take the planar Skorohod embedded random walks
$(B_m(t))_{t \in \mathbb{R}_+}$, $B_m^j(r2^{-2m})=W^j(s_m^j(r))$ ($j=1,2$) in (\ref{eq:disc_Ito}).
That is, let $S_r:=B_m(r2^{-2m})$ and
\[
X_r = X_m(r) := 2^m \left\{B_m(r2^{-2m}) - B_m((r-1)2^{-2m}) \right\} .
\]
Then $(X_m(r))_{r=1}^{\infty}$ is a two-dimensional, independent, $(\pm 1, \pm 1)$ symmetric coin
tossing sequence. Define \emph{stochastic sums} by
\begin{multline} \label{eq:stocsum1}
\left(f_m(\omega, u, W) \cdot W \right)^m_t := \sum_{r=1}^{n} \left\{f_m^1\left(\omega , t_{r-1},
B_m^1(t_{r-1}), B_m^2(t_{r-1})\right) \: 2^{-m}
X_m^1(r) \right. \\
\left. + f_m^2\left(\omega , t_{r-1}, B_m^1(t_r), B_m^2(t_{r-1}) \right)\: 2^{-m} X_m^2(r) \right\},
\end{multline}
where $t_r := r 2^{-2m}$ and $n := \lfloor t 2^{2m} \rfloor$. (Of course, $B_m$, $X_m$, and $W$ all
depend on $\omega $, but this dependence is not shown here and below, to simplify the notation.)

Now the discrete It\^o's formula (\ref{eq:disc_Ito}) can be written as
\begin{eqnarray} \label{eq:disc_Ito_planar}
\lefteqn{T_{x=a}^{B_m(t_{n})} \left(f_m(\omega , t_{n}, x), \gamma '_0(x) \right) 2^{-m}}
\\
& = & \sum_{r=1}^{n} T_{x=a}^{B_m(t_r)} \left(\left\{f_m(\omega , t_r,x)
- f_m(\omega , t_{r-1}, x)\right\}, \gamma'_0(x) \right) 2^{-m}  \nonumber \\
& + & \left(f_m(\omega, u, W) \cdot W \right)^m_t \nonumber \\
& + &  \frac12 \sum_{r=1}^{n} \left\{ \frac{f_m^1 \left(\omega , t_{r-1}, B_m^1(t_r), B_m^2(t_{r-1})
\right) - f_m^1 \left(\omega , t_{r-1}, B_m(t_{r-1}) \right)}
{2^{-m} X^1_m(r)} \right. \nonumber \\
& & \qquad + \left. \frac{f_m^2 \left(\omega, t_{r-1}, B_m(t_r) \right) - f_m^2 \left(\omega, t_{r-1},
B_m^1(t_r), B_m^2(t_{r-1}) \right)} {2^{-m} X^2_m(r)}\right\} 2^{-2m} . \nonumber
\end{eqnarray}

Our strategy is that we show that each term in this formula, except for the stochastic sum, almost
surely uniformly converges to the corresponding term of the planar It\^o's formula, on any bounded
time interval. Then it follows that the stochastic sum almost surely uniformly converges as well (to
the stochastic integral), on any bounded time interval. Hence at the same time we obtain a proof of
the planar It\^o formula as an almost sure uniform limit of the discrete formula.

\begin{thm} \label{th:planar_Ito}
Suppose $g(\omega,t,x)$ is measurable in $\omega$ for all $(t,x)$, and is $C^{1,3}$ in $(t,x)$ for
almost every $\omega $. For each $m=0,1,2,\dots$ and for all $\omega $ and $t$, let $f_m(\omega, t,
x)$ denote the discrete conservative modification of $(\nabla g)(\omega, t, x)$ on the grid
$\mathcal{G}(a,2^{-m})$. Taking a planar Brownian motion $W$, for each $m$ define the Skorohod
embedded random walk $B_m$. Then for arbitrary $K>0$,
\[
\sup_{t\in[0,K]}\left|\left(f_m(\omega, u, W) \cdot W \right)_t^m - \int_0^t (\nabla g)(\omega , u,
W(u)) \cdot \di W(u) \right| \rightarrow 0
\]
almost surely as $m \to \infty$, and for any $t \ge 0$ we obtain the planar It\^o's formula as an
almost sure limit of the discrete formula (\ref{eq:disc_Ito_planar}):
\begin{eqnarray} \label{eq:Ito_form}
\lefteqn{g(\omega , t, W(t)) - g(\omega , 0, W(0)) = \int_{0}^{t}
(D_t g)(\omega , u, W(u)) \di u} \\
& & + \int_{0}^{t} (\nabla g)(\omega , u, W(u)) \cdot \di W(u) + \frac12 \int_{0}^{t} (\Delta
g)(\omega , u, W(u)) \di u . \nonumber
\end{eqnarray}
\end{thm}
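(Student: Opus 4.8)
The plan is to process the discrete formula \eqref{eq:disc_Ito_planar} term by term, showing that every term except the stochastic sum converges almost surely, uniformly on $[0,K]$, to the corresponding continuous object; uniform convergence of the sum of the remaining terms then forces the stochastic sum to converge as well, and \eqref{eq:Ito_form} drops out by identifying the limits. Throughout I would work on a fixed $\omega$ in the full-measure set where $g(\omega,\cdot,\cdot) \in C^{1,3}$ and where the strong approximation $\sup_{[0,K]}|W(t)-B_m(t)| = O(m^{3/4}2^{-m/2})$ of Corollary \ref{co:strong_d} holds, together with the analogous rate for the local times. Since all the action happens on a bounded region, I would fix $R>0$ so that $\sup_{[0,K]}|W(t)-a| \le R-1$ a.s.\ (enlarging $R$ if needed), so that all grid points of $\mathcal{G}(a,2^{-m})$ touched by $B_m$ on $[0,K]$ lie in $\{|x-a|\le R\}$ for $m$ large, and Lemma \ref{le:error_in_field} and \eqref{eq:fm_g}, \eqref{eq:field_modif_t} apply uniformly there.

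\textbf{The left-hand side.}
First I would handle $T_{x=a}^{B_m(t_n)}(f_m(\omega,t_n,x),\gamma'_0(x))\,2^{-m}$. Because $f_m$ is discrete conservative it has a discrete potential $g_m$ on $\mathcal{G}(a,2^{-m})$, and $T_{x=a}^{B_m(t_n)}(f_m,\gamma'_0)2^{-m} = g_m(B_m(t_n)) - g_m(a)$. Using Lemma \ref{le:error_in_field} one shows $g_m \to g(\omega,t_n,\cdot)$ uniformly on $\{|x-a|\le R\}$: indeed along a grid path the trapezoidal sum of $f_m$ differs from that of $\nabla g$ by at most (path length)$\cdot\frac{R}{6}2^{-m}\epsilon(2^{-m})$, and the trapezoidal sum of the genuine gradient $\nabla g$ differs from $g(\omega,t_n,B_m(t_n))-g(\omega,t_n,a)$ by $O(2^{-m}\cdot(\text{path length})\cdot\sup|D^3 g|)$ via the trapezoidal rule \eqref{eq:trapez_error} applied coordinatewise. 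Choosing, say, a monotone ``staircase'' path from $a$ to $B_m(t_n)$ keeps the path length bounded by $2R+o(1)$, so $g_m(B_m(t_n))-g_m(a) \to g(\omega,t,W(t))-g(\omega,0,a)$, uniformly in $t\in[0,K]$, using continuity of $g$ and $B_m(t)\to W(t)$ uniformly.

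\textbf{The drift terms.}
For the first sum on the right of \eqref{eq:disc_Ito_planar}, each summand is $T_{x=a}^{B_m(t_r)}(\{f_m(\omega,t_r,x)-f_m(\omega,t_{r-1},x)\},\gamma'_0)2^{-m}$, which by the discrete mean value theorem in $t$ equals $T_{x=a}^{B_m(t_r)}((D_t f_m)(\omega,\xi_r,x),\gamma'_0)2^{-m}\cdot 2^{-2m}$ for some $\xi_r\in(t_{r-1},t_r)$; using \eqref{eq:field_modif_t} and the trapezoidal estimate this discrete path integral is $(D_t g)(\omega,\xi_r,B_m(t_r)) - (D_t g)(\omega,\xi_r,a) + O(2^{-m})$ plus a telescoping-type remainder — more cleanly, I would write it as $g_m'(B_m(t_r))-g_m'(a)$ for the potential $g_m'$ of $D_t f_m$, observe $g_m' \to D_t g(\omega,\cdot,\cdot)$ uniformly, and recognize $\sum_{r=1}^n [\,G(t_{r},B_m(t_r)) - G(t_r,a)\,]\,2^{-2m}$... actually the factor structure gives $\sum_r [(D_tg)(\omega,\xi_r,B_m(t_r))-(D_tg)(\omega,\xi_r,a)]\,2^{-2m}$, a Riemann sum for $\int_0^t[(D_tg)(\omega,u,W(u))-(D_tg)(\omega,u,a)]\,du$; combined with the $g_m(a),g_m'(a)$ bookkeeping and uniform continuity of $D_tg$ on the compact region, this converges uniformly to $\int_0^t (D_tg)(\omega,u,W(u))\,du$. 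For the last double sum, each difference quotient $\frac{f_m^j(\ldots x\text{-shifted}\ldots) - f_m^j(\ldots)}{2^{-m}X_m^j(r)}$ is, by \eqref{eq:fm_g} and a Taylor expansion of $D_jg$, equal to $(D_{jj}g)(\omega,t_{r-1},B_m(t_{r-1})) + o(1)$ uniformly (the modification error contributes $O(\epsilon_K^0(2^{-m}))$ to the quotient since it is divided by only one power of $2^{-m}$, which still $\to 0$), so $\frac12\sum_{r=1}^n[(D_{11}g)+(D_{22}g)](\omega,t_{r-1},B_m(t_{r-1}))\,2^{-2m}$ is a Riemann sum converging uniformly to $\frac12\int_0^t (\Delta g)(\omega,u,W(u))\,du$, using that $B_m(t_{r-1})$ and $W(u)$ agree up to $O(m^{3/4}2^{-m/2})$ and $\Delta g$ is uniformly continuous on the compact set.

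\textbf{Conclusion and the main obstacle.}
Having shown all non-stochastic terms of \eqref{eq:disc_Ito_planar} converge uniformly on $[0,K]$, the stochastic sum $(f_m(\omega,u,W)\cdot W)^m_t$, being the difference of convergent terms, converges uniformly a.s.; its limit must be $g(\omega,t,W(t))-g(\omega,0,W(0)) - \int_0^t(D_tg)\,du - \frac12\int_0^t(\Delta g)\,du$. Independently, the limit is the It\^o stochastic integral $\int_0^t(\nabla g)(\omega,u,W(u))\cdot dW(u)$: this follows because the stochastic sum is a martingale-transform approximation built on the Skorohod-embedded walk, $f_m\to\nabla g$ uniformly, $B_m\to W$ uniformly, and the embedding times $s_m^j(r)$ cluster densely in $[0,t]$ — so $(f_m\cdot W)^m_t$ matches the standard left-endpoint Riemann–It\^o sum for the stochastic integral up to a null term (one uses the $L^2$/a.s.\ approximation of stochastic integrals by such sums, or invokes that this identification was essentially the point of Skorohod embedding). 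Identifying these two descriptions of the limit yields both assertions. \textbf{The main obstacle} I anticipate is the rigorous control of the stochastic sum: one must verify that the left-endpoint evaluations at $B_m^1(t_r)$ in the $f_m^2$ term (note the asymmetry in \eqref{eq:stocsum1}, where the first spatial coordinate is already updated) do not spoil the martingale structure or the It\^o (as opposed to Stratonovich) limit, and that the time lag \eqref{eq:tlag} inherent in the Skorohod/``twist and shrink'' correspondence contributes only a vanishing error uniformly on $[0,K]$. The rest is uniform continuity on compacts plus Riemann-sum convergence, controlled by the rate in Corollary \ref{co:strong_d}.
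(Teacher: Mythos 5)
Your proposal follows the paper's proof almost exactly: fix a good $\omega$, confine everything to a compact ball, show the trapezoidal term on the left and the two ``drift'' terms on the right of (\ref{eq:disc_Ito_planar}) converge uniformly on $[0,K]$ via Lemma \ref{le:error_in_field}, the trapezoidal error (\ref{eq:trapez_error}), the mean value theorem in $t$, Riemann--sum arguments, and the rate in Corollary \ref{co:strong_d}; then deduce uniform a.s.\ convergence of the stochastic sum as the residual. The staircase path you choose is the same $\gamma_n = [a,(B_m^1(t_n),a^2)]+[(B_m^1(t_n),a^2),B_m(t_n)]$ the paper uses, and your treatment of the difference quotients in the last term matches (\ref{eq:last_term}).

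The one place you diverge is the final identification, and the ``main obstacle'' you anticipate is actually moot in the paper's framework. You propose to verify \emph{independently} that the stochastic sum converges to the classical It\^o integral via martingale-transform / $L^2$ approximation arguments. The paper does not do this: it simply takes the a.s.\ uniform limit of the stochastic sums, whose existence follows from the convergence of all the other terms, \emph{as} the stochastic integral $\int_0^t (\nabla g)(\omega,u,W(u))\cdot \di W(u)$ (the pathwise ``integration by parts'' technique the author refers to after the theorem). This is precisely why the theorem does not assume $g$ adapted to the filtration of $W$ — and it is also why your proposed route (b) cannot work as stated: without adaptedness the classical It\^o integral of $(\nabla g)(\omega,u,W(u))$ need not even be defined, so there is nothing to match the martingale-transform sums against. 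Your route (a) — the limit is whatever the difference of the convergent terms is — is the paper's argument and suffices; the worries about the asymmetric evaluation points in (\ref{eq:stocsum1}) and the time lag (\ref{eq:tlag}) therefore do not need to be resolved at the level of martingale structure, only at the level of the deterministic error estimates you have already carried out.
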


\begin{proof}
We are going to prove (\ref{eq:Ito_form}) pathwise. For this, let $\Omega_0$, $\PP \lb \Omega_0 \rb =
1$, denote a subset of the sample space $\Omega $, on which, as $m \to \infty$, $B_m$ uniformly
converges to $W$ on $[0,K]$ and $g(\omega , t, x)$ is $C^{1,3}$ as a function of $(t,x)$. During the
proof we fix an $\omega \in \Omega _0$. Then, obviously, $W$ has a continuous path and its range over
$[0, K]$ lies in a ball $B_R(a) := \{x : |x-a| \le R\}$ with a finite radius $R = R(\omega)$. Also, by
Corollary \ref{co:strong_d}, we may assume that the range of $B_m$ over $[0, K]$ lies in the same ball
for any $m \ge m_0(\omega)$.

Consider \emph{the term on the left  side} of (\ref{eq:disc_Ito_planar}). We want to show that it
uniformly converges to $g(\omega , t, W(t)) - g(\omega , t, a)$ for $t \in [0,K]$. Define the path
$\gamma_n = [a, (B_m^1(t_n), a^2)]+[(B_m^1(t_n), a^2), B_m(t_n)]$, where $n:=\lfloor t 2^{2m} \rfloor$
and $t_n = n 2^{-2m}$. Then we have
\begin{eqnarray} \label{eq:trapez}
\lefteqn{\sup_{t\in [0,K]} \left|T_{x=a}^{B_m(t_{n})} \left(f_m(\omega , t_{n}, x), \gamma '_0(x)
\right) 2^{-m} - \int_{a}^{W(t)} \left((\nabla g)(\omega , t, x), \gamma '_0(x)
\right) \di s \right|} \nonumber \\
&\le& \sup_{t\in [0,K]} \left|T_{\gamma_n } \left(f_m(\omega , t_{n}, x), \gamma '_0(x) \right) 2^{-m}
- T_{\gamma_n } \left((\nabla g)(\omega , t, x), \gamma '_0(x) \right) 2^{-m}
\right| \nonumber \\
&+& \sup_{t\in [0,K]} \left|T_{\gamma_n} \left((\nabla g)(\omega , t, x), \gamma '_0(x) \right) 2^{-m}
- \int_{\gamma_n} \left((\nabla g)(\omega , t, x), \gamma '_0(x) \right) \di s \right|
\nonumber \\
&+& \sup_{t\in [0,K]} \left| \int_{B_m(t_{n})}^{W(t)} \left( (\nabla g)(\omega , t, x) , \gamma '_0(x)
\right) \di s \right|.
\end{eqnarray}

The first term on the right side of (\ref{eq:trapez}) can be bounded above by $K 2^{2m} \times 2^{-m}
\times (R/6) 2^{-m} \epsilon_K^0(2^{-m}) = O\left(\epsilon_K^0(2^{-m})\right)$. Here the first factor
bounds the number of terms in the trapezoidal sum, the second factor is a multiplier in each term, and
the third factor bounds the difference of the terms in the two sums by (\ref{eq:fm_g}).

The second term on the right side of (\ref{eq:trapez}) can be bounded by $K 2^{2m} \times (M_3/12)
2^{-3m} = O\left(2^{-m}\right)$. Here the first factor bounds the number of terms in the trapezoidal
sum and the second factor bounds the difference of a trapezoidal term and the corresponding integral
by (\ref{eq:trapez_error}), where $M_3$ is an upper bound of the magnitude of third $x$-partial
derivatives of $g$ for $(t,x) \in [0, K] \times B_R(a)$.

The third term on the right side of (\ref{eq:trapez}) can be bounded by $M_2 \times (O(m^{\frac34}
2^{-\frac{m}{2}})$ $+ 2^{-m} ) = O(m^{\frac34} 2^{-\frac{m}{2}})$. Here $M_2$ is an upper bound of
$|\nabla g|$ for $(t,x) \in [0, K] \times B_R(a)$, while the first term in the parentheses bounds
$|W(t)-B_m(t)|$ by (\ref{eq:strong_d}), and the second term bounds $|B_m(t)-B_m(t_n)|$.

In sum, the the term on the left  side of (\ref{eq:disc_Ito_planar}) is bounded by
$O\left(\epsilon_K^0(2^{-m})\right) + O(m^{\frac34} 2^{-\frac{m}{2}})$.

Let us turn now to \emph{the first term on the right  side} of (\ref{eq:disc_Ito_planar}). Define the
paths $\gamma_r = [a, (B_m^1(t_r), a^2)]+[(B_m^1(t_r), a^2), B_m(t_r)]$ $(1 \le r \le n)$. Then we
have
\begin{multline*}
\sum_{r=1}^{n}  T_{x=a}^{B_m(t_r)}  \left( \left\{f_m(\omega , t_r, x)
- f_m(\omega , t_{r-1}, x) \right\}, \gamma'_0(x) \right) 2^{-m}  \\
= \sum_{r=1}^{n}  T_{\gamma_r} \left( \left\{(D_t f_m^1(\omega , t_{r-1}+s_r^1, x),
D_t f_m^2(\omega , t_{r-1}+s_r^2, x) \right\} 2^{-2m}, \gamma'_0(x) \right) 2^{-m}  \\
= \sum_{r=1}^{n}  T_{\gamma_r} \left( \left\{(D_t D_1 g(\omega , t_{r-1}+s_r^1, x),
D_t D_2 g(\omega , t_{r-1}+s_r^2, x) \right\} , \gamma'_0(x) \right) 2^{-m} \: 2^{-2m} \\
+ O(\epsilon_K^1(2^{-m})) \\
= \sum_{r=1}^{n} D_t \left\{ T_{\gamma_r} \left(\nabla g(\omega , t_r, x),
\gamma'_0(x) \right) 2^{-m} \right\} \: 2^{-2m} + O(\epsilon_K^1(2^{-m})) + O(2^{-2m}) \\
= \sum_{r=1}^{n} D_t \int_{\gamma_r} \nabla g(\omega , t_r, x) \di s \: 2^{-2m}
+ O(\epsilon_K^1(2^{-m})) + O(2^{-m}) \\
= \sum_{r=1}^{n} D_t \left\{ g(\omega , t_r, B_m(t_r)) - g(\omega , t_{r}, a) \right\}
\: 2^{-2m} + O(\epsilon_K^1(2^{-m})) + O(2^{-m}) \\
= \int_0^t (D_t g)(\omega , u, W(u)) \di u - g(\omega , t, a) + g(\omega , 0, a) +
O(\epsilon_K^1(2^{-m})) + O(m^{\frac34} 2^{-\frac{m}{2}}) .
\end{multline*}
Above we made use of the fact that all considered functions are uniformly continuous over the bounded
set $[0, K] \times B_R(a)$. The first equality used the mean value theorem in the time variable,
component-wise for  $f_m$, with $0 \le s_r^1, s_r^2 \le 2^{-2m}$. The second equality applied
inequality (\ref{eq:field_modif_t}), combined with the largest possible number of terms in the sums
and the corresponding multipliers. The third equality estimated the error, when one replaces the
values of the components of $\nabla g$ at time $t_{r-1}+s_r^j$ by their values at time $t_r$. The
fourth equality replaced the trapezoidal sum by the corresponding integral, using
(\ref{eq:trapez_error}). In the fifth equality we evaluated the integral over the path $\gamma_r$. The
last equality replaces the sum of the function $D_t g$ in the time variable by an integral, and at the
same time replaces $B_m$ by $W$, using (\ref{eq:strong_d}).

Finally, let us consider \emph{the last term} in (\ref{eq:disc_Ito_planar}). By (\ref{eq:fm_g}), for
the first term in the braces we have
\begin{eqnarray*}
\lefteqn{\frac{f_m^1 \left(\omega , t_{r-1}, B_m^1(t_r), B_m^2(t_{r-1}) \right) - f_m^1 \left(\omega ,
t_{r-1},
B_m(t_{r-1})) \right)} {2^{-m} X^1_m(r)}} \\
&=& \frac{(D_1 g)\left(\omega , t_{r-1}, B_m^1(t_r), B_m^2(t_{r-1}) \right) - (D_1 g)\left(\omega ,
t_{r-1},
B_m(t_{r-1}) \right)} {2^{-m} X^1_m(r)} \\
&& + \: O(\epsilon_K^0(2^{-m})) \\
&=& (D_{11} g)\left(\omega , t_{r-1}, B_m^1(t_{r-1})+s^1_r , B_m^2(t_{r-1}) \right) +
O(\epsilon_K^0(2^{-m})),
\end{eqnarray*}
where $|s^1_r| \le 2^{-m}$. For the other part in the braces of the last term of
(\ref{eq:disc_Ito_planar}) involving $f^2_m$ one can obtain a similar result by the help of $D_{22}$.
In sum, with $|s^j_r| \le 2^{-m}$, we have
\begin{eqnarray} \label{eq:last_term}
\lefteqn{\sum_{r=1}^{n} \left\{ \frac{f_m^1 \left(\omega , t_{r-1}, B_m^1(t_r), B_m^2(t_{r-1}) \right)
- f_m^1 \left(\omega , t_{r-1}, B_m(t_{r-1}) \right)} {2^{-m} X^1_m(r)}
\right. } \nonumber \\
& & + \left. \frac{f_m^2 \left(\omega , t_{r-1}, B_m(t_r) \right) - f_m^2 \left(\omega , t_{r-1},
B_m^1(t_r), B_m^2(t_{r-1}) \right)} {2^{-m} X^2_m(r)}\right\} 2^{-2m}
\nonumber \\
&=& \sum_{r=1}^{n} \left\{(D_{11} g)\left(\omega , t_{r-1}, B_m^1(t_{r-1}) + s^1_r , B_m^2(t_{r-1})
\right) \right.
\nonumber \\
&& + \left. (D_{22} g)\left(\omega , t_{r-1}, B_m^1(t_{r}), B_m^2(t_{r-1}) + s^2_r  \right)
\right\} 2^{-2m} + O(\epsilon_K^0(2^{-m})) \nonumber \\
& = & \int_{0}^{t} (\Delta g)(\omega , u, W(u)) \di u + O(\epsilon_K^0(2^{-m})) + O(m^{\frac34}
2^{-\frac{m}{2}}).
\end{eqnarray}
Here, similarly as above, the sum was replaced by an integral and $B_m$ by $W$.

Thus we have seen that all terms, except for the stochastic sum, converge to their counterparts in
(\ref{eq:Ito_form}), almost surely uniformly on $[0, K]$. (An extra term $-g(\omega , t, a)$ has
appeared too on both sides, that cancel each other.) Therefore the stochastic sum must converge to the
stochastic integral in the same sense as well. This ends the proof of the theorem.

\end{proof}

The reader may have noticed in the statement of Theorem \ref{th:planar_Ito} that the usual condition
in It\^o's formulae that the random function $g(\omega , t, x)$ be adapted to the filtration of
Brownian motion $W$, was not needed: the assumed smoothness of $g$ together with the pathwise,
integration by parts stochastic integration technique made this assumption unnecessary.

%%%%%%%%%%%%%%%%%%%%%%%%%%%%%%%%%%%%%%%%%%%%%%%%%%%%%%%%%%%%%%%%%%%

\section{Discrete self-intersection local time}
\label{sec:DSILT}

The definition of discrete \emph{self-intersection local time} follows the lines of the definition of
discrete local time (\ref{eq:dloct}), (\ref{eq:tasloct}), (\ref{eq:DPLT}) and (\ref{eq:DLT}). Take
first a planar simple, symmetric random walk $(S_n)_{n=0}^{\infty}$, $S_0=0$, $S_n = \sum_{i=1}^{n}
X_i$ ($n \ge 1)$, where $(S_{n}^{1})$ and $(S_{n}^{2})$ are independent one-dimensional simple,
symmetric random walks with unit steps in unit time. Define \emph{self-intersection local time of the
random walk} by
\begin{multline}\label{eq:SILT}
\alpha_1(n, x) := \#\{(i,j) : 0 \le i \le j < n, \: S_j - S_i = x \} \\
= \sum_{i=0}^{n-1} \sum_{j=i}^{n-1} \mathbf{1}_{\{S_j-S_i=x\}},
\end{multline}
where $n \in \mathbb{Z}_+$ and $x \in \mathbb{Z}^2$. We also need \emph{partial self-intersection
local times}
\begin{equation}\label{eq:PSILT}
\alpha_1^{\mu}(n, x) := \#\{(i,j) : 0 \le i \le j < n, \: S_j - S_i = x, \: S_{j+1} - S_j = \mu  \},
\end{equation}
where $\mu \in \{-1, 1\}^2$.

Clearly, by the strong Markovian property of random walks, each inner sum $\ell_i(n-i,x):=
\sum_{j=i}^{n-1} \mathbf{1}_{\{S_j-S_i=x\}}$ in the last term of (\ref{eq:SILT}) is a local time of a
random walk started from the point $S_i$, taken at time $n-i$ at the point $x$;
\begin{equation}\label{eq:sumloct}
\alpha_1(n, x) = \sum_{i=0}^{n-1} \ell_i(n-i,x) .
\end{equation}
Denote the largest number of visits to a point of the random walk in the first $n$ steps by
\[
\ell^*(n) := \sup_{x \in \mathbb{Z}^2} \ell(n,x).
\]
Similarly, denote the largest number of visits to a point of the random walk starting from point
$S_i$, in the first $n-i$ steps, by $\ell_i^*(n-i)$. Then for any $\omega \in \Omega$ one clearly has
\begin{equation}\label{eq:loctseq}
\ell_0^*(n) \ge \ell_{1}^{*}(n-1) \ge \cdots \ge \ell_{n-1}^{*}(1) .
\end{equation}

In a classical paper \cite{ET1960}, Erd\H{o}s and Taylor showed the following inequality for the
maximum number of visits of a random walk in the first $n$ steps:
\begin{equation}\label{eq:maxloct}
\limsup_{n \to \infty} \frac{\ell^{*}(n)}{\log^2(n)} \le \frac{1}{\pi} \qquad \text{a.s.}
\end{equation}
The next lemma is an easy consequence of this result.
\begin{lem} \label{le:SILT}
\[
\limsup_{n \to \infty} \frac{\sup_{x \in \mathbb{Z}^2} \alpha_1(n, x)}{n \log^2 n}  \le \frac{1}{\pi}
\qquad \text{a.s.}
\]
\end{lem}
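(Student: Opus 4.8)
The plan is to bound the supremum of $\alpha_1(n,x)$ over $x$ by the decomposition \eqref{eq:sumloct}, namely $\alpha_1(n,x) = \sum_{i=0}^{n-1} \ell_i(n-i,x)$. Taking the supremum over $x$ and using that the supremum of a sum is at most the sum of suprema, I get
\[
\sup_{x \in \mathbb{Z}^2} \alpha_1(n,x) \le \sum_{i=0}^{n-1} \sup_{x \in \mathbb{Z}^2} \ell_i(n-i,x) = \sum_{i=0}^{n-1} \ell_i^*(n-i).
\]
By the monotonicity \eqref{eq:loctseq}, every term $\ell_i^*(n-i)$ is dominated by $\ell_0^*(n) = \ell^*(n)$, so the whole sum is at most $n\,\ell^*(n)$. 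Hence $\sup_x \alpha_1(n,x) \le n\,\ell^*(n)$ deterministically, for every $\omega$.

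From here the result is immediate: divide by $n\log^2 n$ to obtain
\[
\frac{\sup_{x \in \mathbb{Z}^2} \alpha_1(n,x)}{n \log^2 n} \le \frac{\ell^*(n)}{\log^2 n},
\]
and take $\limsup_{n\to\infty}$ on both sides. Applying the Erd\H{o}s--Taylor bound \eqref{eq:maxloct} to the right-hand side gives the claimed bound $1/\pi$ almost surely.

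There is essentially no real obstacle here; the only point requiring a moment's care is the first inequality, where one must note that $\ell_i(n-i,x)$ is itself a random walk local time (by the strong Markov property, as already observed in the text preceding \eqref{eq:sumloct}), so that $\sup_x \ell_i(n-i,x) = \ell_i^*(n-i)$ by definition, and that these suprema are comparable across $i$ via the pathwise chain \eqref{eq:loctseq}. One should also observe that the crude bound $\sup_x \alpha_1(n,x) \le n\,\ell^*(n)$ loses nothing asymptotically at the scale $n\log^2 n$, since the dominant contribution already comes from $i$ near $0$ where $\ell_i^*(n-i)$ is of order $\log^2 n$. If desired, one could alternatively phrase the argument so that it picks up an additional $o(\log^2 n)$ from the tail, but this is unnecessary for the stated inequality.
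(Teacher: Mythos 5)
Your proof is correct and follows essentially the same route as the paper: decompose via (\ref{eq:sumloct}), bound each $\ell_i^*(n-i)$ by $\ell_0^*(n)$ using the pathwise monotonicity (\ref{eq:loctseq}) to get $\sup_x \alpha_1(n,x)\le n\,\ell^*(n)$, and then invoke the Erd\H{o}s--Taylor bound (\ref{eq:maxloct}). The only difference is that you spell out the intermediate steps (sup of a sum bounded by sum of sups) which the paper compresses into one displayed line.
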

\begin{proof}
By (\ref{eq:sumloct}), (\ref{eq:loctseq}) and (\ref{eq:maxloct}),
\[
\limsup_{n \to \infty} \frac{\sup_{x \in \mathbb{Z}^2} \alpha_1(n, x)}{n \log^2 n}  \le \limsup_{n \to
\infty} \frac{n \ell_0^*(n)}{n \log^2 n} \le \frac{1}{\pi}  \qquad \text{a.s.}
\]
\end{proof}
I do not know if this lemma is sharp or what sharp lower limit rate could be given for $\sup_{x \in
\mathbb{Z}^2} \alpha_1(n, x)$. In this regard it can be mentioned that Dembo et al. \cite{DPRZ2001}
relatively recently proved the conjecture of Erd\H{o}s and Taylor that in fact
\[
\lim_{n \to \infty} \frac{\ell^{*}(n)}{\log^2(n)} = \frac{1}{\pi}  \qquad \text{a.s.}
\]

Now we apply the previous results to shrunken random walks. Let $h > 0$, $x \in h \mathbb{Z}^2 =
\mathcal{G}(0,h)$ and $t \in h^2 \mathbb{Z}_+$. Consider a simple, symmetric random walk
$(S_n)_{n=0}^{\infty}$, $S_0=0$, on the grid $\mathcal{G}(0,h)$, with time steps $h^2$. (That is, the
time between step $n$ and step $n+1$ of the walk is $h^2$.) Define the corresponding
\emph{self-intersection local time} as
\begin{multline} \label{eq:TDSILT}
\alpha _{h} (t, x) := h^2 \: \#\{(i,j) : 0 \le i \le j < t / h^2, \: S_j - S_i = x \} \\
= h^2 \sum_{i=0}^{n-1} \sum_{j=i}^{n-1} \mathbf{1}_{\{S_j - S_i = x\}} = h^2 \sum_{i=0}^{n-1} \ell_i(n
- i, x/h),
\end{multline}
where $n = t / h^2$ and $\ell_i(n - i, x/h)$ is defined in the same way as above. A \emph{partial
self-intersection local time} in the direction $\mu \in \{1, -1 \}^2$ is
\begin{equation} \label{eq:PDSILT}
\alpha _{h}^{\mu}(t, x) := h^2 \sum_{i=0}^{n-1} \sum_{j=i}^{n-1} \mathbf{1}_{\{S_j - S_i = x; \:
S_{j+1} - S_j = \mu h \}} .
\end{equation}

Let us extend $\alpha _{h} (t, x)$ for any $t \in \mathbb{R}_+$ and $x \in \mathbb{R}^2$ as a
continuous function.  First, with $t \in h^2 \mathbb{Z}_+$ fixed, we apply linear interpolation in
$x$. Let $x$ be a point in a lower triangle $\Delta $ with vertices $(a^1,a^2)$, $(a^1+h,a^2)$, and
$(a^1,a^2+h)$ for some $a \in h \mathbb{Z}^2$. Let $A=\alpha _{h}(t, (a^1,a^2))$, $B=\alpha _{h}(t,
(a^1+h,a^2))$, and $C=\alpha _{h}(t, (a^1,a^2+h))$. Then define
\begin{equation}\label{eq:int_pol}
\alpha _{h}(t, x) := A + \frac{x^1 - a^1}{h}(B-A) + \frac{x^2 - a^2}{h}(C-A) .
\end{equation}
Analogous is the case with an upper triangle.

Second, define $\alpha _{h} (t, x) := \alpha _{h} (h \lfloor t/h \rfloor, x)$ for $t \in \mathbb{R}_+$
and $x \in \mathbb{R}^2$. Similar is the extension of partial self-intersection local times as
continuous functions. It will be of use later that then
\begin{equation}\label{eq:int_pol_int}
\int_{\Delta } \alpha _{h}(t, x) \di x = \frac{h^2}{6} (A+B+C).
\end{equation}

Lemma \ref{le:SILT} clearly implies that
\[
\limsup_{h \to 0^+} \frac{\sup_{x \in \mathbb{R}^2} \alpha_h(t, x)}{t \log^2 (t/h^2)}  \le
\frac{1}{\pi} \qquad \text{a.s.}
\]

Briefly, this means that
\begin{equation}\label{eq:supalpha}
\sup_{x \in \mathbb{R}^2} \alpha_h(t, x) = O\left( \log^2 (h) \right) \qquad (h \to 0^+) \qquad
\text{a.s.}
\end{equation}

Take now a planar Brownian motion $(W(t))_{t \ge 0}$, $W(0)=0$. Then take planar Skorohod embedded
random walks $(B_m(t))_{t \ge 0}$, $B_m^j(r2^{-2m})=W^j(s_m^j(r))$ ($j=1,2$) for $m \in \mathbb{Z}_+$.
Clearly, $B_m(t)$ is a shrunken random walk with $h = 2^{-m}$. For sake of simplicity, let us denote
the corresponding self-intersection local time by $\alpha _{m} (t, x)$, and partial  self-intersection
local times by $\alpha _{m}^{\mu}(t, x)$. It follows that
\begin{equation}\label{eq:ordSILT}
\sup_{x \in \mathbb{R}^2} \alpha_m(t, x) = O(m^2) \qquad  (m \to \infty) \qquad \text{a.s.},
\end{equation}
uniformly on any bounded time interval $t \in [0, K]$.

It will be also useful in the sequel that for a.e. $\omega $, $\alpha _{m}(t, x) = 0$ if $|x| > R =
R(\omega )$, for any $m \ge 0$ and $t \in [0, K]$, supposing $R$ is large enough. This follows from
the fact that for a.e. $\omega \in \Omega$, the continuous function $W(v)-W(u)$ is bounded on the
compact triangle $V_K = \{(u,v) : 0 \le u \le v \le K \}$ and $B_m(v) - B_m(u)$ almost surely
uniformly converges to it on $V_K$ as $m\to \infty$.

%%%%%%%%%%%%%%%%%%%%%%%%%%%%%%%%%%%%%%%%%%%%%%%%%%%%%%%%%%%%%%%%%%%

\section{A discrete Tanaka--Rosen--Yor formula}
\label{sec:dTRY}

The aim of this section is to give a discrete version of the planar Tanaka--Rosen--Yor formula. Beyond
its intrinsic interest, a special case of this formula will serve as a basic tool for a rather natural
definition of planar Brownian self-intersection local time. Like the discrete It\^o's formulae in
Section \ref{sec:disc_Ito}, this formula will be an algebraic--analytic one with no intrinsic
randomness involved, though, naturally, it will be applied in a probabilistic context afterward.

\begin{lem} \label{le:disc_TRY}
Let $\phi$ be a $C^3$ scalar field in the plane. Fix an $h > 0$ and let $x \in h \mathbb{Z}^2 =
\mathcal{G}(0,h)$. Consider a sequence $X_r = (X^1_{r}, X^2_{r})$ ($r \ge 1$), where $X^j_{r} = \pm
1$. Take partial sums $S_0=0 $, $S_n = h(X_1+\cdots +X_n)$ ($n \ge 1$), supposing that the steps of
this ``walk'' are performed in time units $h^2$. Let us take the discrete paths $\gamma_r = [0,
(S_r^1,0)] + [(S_r^1,0), S_r]$, ($1 \le r \le n$). Then with any $y \in h \mathbb{Z}^2$ fixed, one
obtains the following \emph{discrete Tanaka--Rosen--Yor formula}:
\begin{multline} \label{eq:dTRY1}
\sum_{j=0}^{n} \left\{ T_{\gamma _n} \left( \nabla \phi (x - S_j - y) , \gamma '_0(x) \right)
h \right\} \: h^2 \\
= \sum_{r=1}^{n} \left\{ T_{\gamma _r} \left( \nabla \phi (x - S_{r} - y) , \gamma '_0(x) \right)
h \right\} \: h^2 \\
+ \sum_{r=1}^{n} \sum_{j=0}^{r-1} \left\{ (D_1 \phi)\left(S_{r-1} - S_j - y \right) h X_r^1
+ (D_2 \phi)\left((S_{r}^{1}, S_{r-1}^2) - S_j - y \right) h X_r^2 \right\} h^2 \\
+ \frac12 \sum_{r=1}^n \sum_{j=0}^{r-1} (\Delta \phi)\left(S_{r-1} - S_j - y\right) \: h^4 + O(h) +
O(\epsilon (h)),
\end{multline}
where $\epsilon (h) \to 0$ as $h \to 0$. One has the following equality for the last term as well:
\begin{multline} \label{eq:dTRY2}
L_h \phi (t_n, y) := \sum_{r=1}^n \sum_{j=0}^{r-1} (\Delta \phi)\left(S_{r-1} - S_j - y\right) \: h^4 \\
= \sum_{x \in h \mathbb{Z}^2}  \alpha _{h}(t_n, x) \: (\Delta \phi)(x - y) \: h^2 + O(h \log^2 h) ,
\end{multline}
where $t_n = n h^2$ and $\alpha _{h}(t_n, x)$ is the self-intersection local time (\ref{eq:SILT}) of
the sums $S_n$. For any $K>0$ fixed, the error terms in (\ref{eq:dTRY1}) and (\ref{eq:dTRY2}) are
uniform while $t_n \in [0, K]$.
\end{lem}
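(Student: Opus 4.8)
The plan is to obtain (\ref{eq:dTRY1}) as the discrete It\^o formula (\ref{eq:disc_Ito}) (with $d=2$, mesh $h$, time step $h^2$) applied to one carefully chosen time-dependent field. For $m\ge 0$ put $G_m(x):=h^2\sum_{j=0}^m\phi(x-S_j-y)$; being a finite sum of translates of $\phi\in C^3$, each $G_m$ is $C^3$, and its gradient $\nabla G_m(x)=h^2\sum_{j=0}^m(\nabla\phi)(x-S_j-y)$ is continuously conservative but in general not discrete conservative on $\mathcal{G}(0,h)$. Let $f_m$ be the discrete conservative modification of $\nabla G_m$ given by the algorithm of Section \ref{sec:grid}; since that algorithm is linear in the underlying scalar field, $f_m$ is the same $h^2$-weighted sum of the modifications of the individual $(\nabla\phi)(\cdot-S_j-y)$, and $f_m-f_{m-1}$ is the modification of the single translated gradient $h^2(\nabla\phi)(\cdot-S_m-y)$. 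Feeding the time-dependent field $f(t_m,x):=f_m(x)$ into (\ref{eq:disc_Ito}) and then discarding the modification reproduces the four lines of (\ref{eq:dTRY1}) term by term: the left side becomes $h^2\sum_{j=0}^n T_{\gamma_n}((\nabla\phi)(x-S_j-y),\gamma'_0(x))h$; the ``time-derivative'' sum $\sum_r T_{S_0}^{S_r}(f(t_r,\cdot)-f(t_{r-1},\cdot),\gamma'_0)h$ becomes $h^2\sum_{r=1}^n T_{\gamma_r}((\nabla\phi)(x-S_r-y),\gamma'_0(x))h$ (conservativity of $f$ lets one evaluate the path integral along $\gamma_r$); the stochastic sum is exactly the second line of (\ref{eq:dTRY1}) once one notes $S_{r-1}+hX_r^1e_1=(S_r^1,S_{r-1}^2)$; and the It\^o correction, after a mean-value expansion of its second-difference quotients, $\frac{(D_k\phi)(z+hX_r^ke_k)-(D_k\phi)(z)}{hX_r^k}=(D_{kk}\phi)(z+\theta he_k)$, together with $D_{11}\phi+D_{22}\phi=\Delta\phi$, becomes the third line.

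It then remains to size the remainder. By Lemma \ref{le:error_in_field}, the modification of a single translated gradient $(\nabla\phi)(\cdot-S_j-y)$ differs from it by $O(h\epsilon(h))$ uniformly on any fixed ball, $\epsilon(h)$ being the modulus of uniform continuity of the third $x$-derivatives of $\phi$ on that ball (so $\epsilon(h)\to 0$); summing the $\le n+1\le Kh^{-2}+1$ translates, each weighted by $h^2$, makes each value of $f_m$ differ from $\nabla G_m$ by $O(h\epsilon(h))$, while the increment $f_m-f_{m-1}$, being the modification of a single $h^2(\nabla\phi)(\cdot-S_m-y)$, differs from $\nabla G_m-\nabla G_{m-1}$ by only $O(h^3\epsilon(h))$. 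Carrying these through the left-hand trapezoidal sum, the $O(h^{-2})$ trapezoidal sums of the time-derivative term, the $O(h^{-2})$ stochastic increments (multipliers $hX_r^j$), and the It\^o correction contributes a total of only $O(\epsilon(h))$. The mean-value step in the It\^o correction shifts the argument inside $D_{11}\phi,D_{22}\phi$ by at most $h\sqrt2$; since $\phi\in C^3$ this costs $O(h)$ per summand, and with $\binom{n+1}{2}=O(h^{-4})$ summands weighted by $h^4$ the net cost is $O(h)$. Collecting the estimates yields the asserted $O(h)+O(\epsilon(h))$ error, uniform for $t_n\in[0,K]$; the implied constants depend only on the range of the walk up to time $K$ and on bounds for the first three derivatives of $\phi$ there (both a.s.\ finite in the eventual application to embedded random walks).

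The identity (\ref{eq:dTRY2}) is purely combinatorial. Writing $L_h\phi(t_n,y)=h^4\sum_{r=1}^n\sum_{j=0}^{r-1}(\Delta\phi)(S_{r-1}-S_j-y)$ and reindexing $(i,m):=(j,r-1)$, the pairs run over $0\le i\le m\le n-1$ and the summand depends on the pair only through the increment $x:=S_m-S_i\in h\mathbb{Z}^2$; grouping by $x$ and invoking the definition (\ref{eq:TDSILT}) of $\alpha_h$, whose count of pairs with $S_m-S_i=x$ is precisely $\alpha_h(t_n,x)/h^2$, turns $L_h\phi(t_n,y)$ into $h^2\sum_x\alpha_h(t_n,x)(\Delta\phi)(x-y)$. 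The reindexing is essentially exact, so the $O(h\log^2 h)$ allowance in (\ref{eq:dTRY2}) is a safe overestimate, bounded via $\sup_x\alpha_h(t_n,x)=O(\log^2 h)$ from (\ref{eq:supalpha}), again uniform on $[0,K]$.

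The principal difficulty is not any single estimate but the bookkeeping: one must spot that the running-sum field $G_m$ is the right object for (\ref{eq:disc_Ito}) to reproduce (\ref{eq:dTRY1}) line for line, and then check that the modification error, incurred on roughly $h^{-2}$ translates and at roughly $h^{-3}$ lattice edges, still vanishes as $h\to 0$. The non-conservativity of $\nabla\phi$ is precisely what forces the modification step, and controlling its cumulative effect together with the Taylor remainder in the It\^o correction is the technical heart of the argument.
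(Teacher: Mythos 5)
Your proposal is correct and follows essentially the same route as the paper: form the running-sum field $g^y(t,x)=h^2\sum_{j\le t/h^2}\phi(x-S_j-y)$, replace its gradient by a discrete conservative field with error controlled by Lemma \ref{le:error_in_field}, feed it into the discrete It\^o formula (\ref{eq:disc_Ito}), and identify the terms. The only (harmless) variations are that the paper builds one discrete conservative $\psi\approx\nabla\phi$ and sums its lattice translates (lattice translations preserve discrete conservativity) rather than invoking linearity of the modification algorithm, and that for (\ref{eq:dTRY2}) the paper regroups the difference quotients via the partial local times $\alpha_h^{\mu}$ and the functions $u^{\mu}$, whereas your direct reindexing shows the identity is in fact exact, so the stated $O(h\log^2 h)$ is indeed only a safe allowance.
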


\begin{proof}
Define the following time dependent scalar field $g^y : h^2 \mathbb{Z}_+ \times h \mathbb{Z}^2 \to
\mathbb{R}$,
\[
g^y(t,x) := \sum_{j=0}^{t/h^2} \phi(x - S_j - y) \: h^2,
\]
where $y \in h \mathbb{Z}^2$ is a parameter. Take a finite $R > 0$ such that the disc $B_R(0)$ cover
all the points $(S_r)_r=0^n$, $n=t/h^2$ and the point $y$ as well. Then all points $S_j - S_i -y$ are
contained by the disc $B_{3R}(0)$. Thus by Lemma \ref{le:error_in_field}, one can construct a discrete
conservative vector field $\psi $ in the plane such that
\begin{equation} \label{eq:grad_error}
\sup_{|x| \le 3R} \left| \psi (x) - \nabla \phi(x) \right| \le \frac{R}{2} h \epsilon (h) ,
\end{equation}
where $\epsilon (h) = \epsilon _{\phi}(h,R) \to 0$ as $h \to 0$.

Further, define
\[
f^y(t,x) := \sum_{j=0}^{t/h^2} \psi (x - S_j - y) \: h^2 .
\]
Then by (\ref{eq:grad_error}) it follows that for any $y$ and $t$ fixed,
\begin{multline} \label{eq:sumgrad_error}
\sup_{|x| \le 3R} \left| f^y(t,x) - \nabla g^y(t,x) \right| \\
= \sup_{|x| \le 3R} \left| \sum_{j=0}^{t/h^2} \psi (x - S_j - y) \: h^2 - \sum_{j=0}^{t/h^2} \nabla
\phi(x - S_j - y) \: h^2 \right| \le \frac{Rt}{2} h \epsilon (h) ,
\end{multline}
and $f^y(t,x)$ is a discrete conservative vector field in the plane.

Now apply the discrete It\^o's formula (\ref{eq:disc_Ito}) to $f^y(t,x)$. Let us denote $t_r = r h^2$
$(r = 0,1, \dots, n)$. Then we get the following terms.

\emph{The term on the left side} of (\ref{eq:disc_Ito}) becomes
\begin{multline*}
T_{x=S_0}^{S_n} \left(f^y(t_n,x), \gamma '_0(x) \right) h \\
= \sum_{j=0}^{n} \left\{ T_{\gamma _n} \left( \nabla \phi (x - S_j - y) , \gamma '_0(x) \right) h
\right\} \: h^2 + O(\epsilon (h)) .
\end{multline*}
The error term is obtained since there are $t/h^2$ terms in the trapezoidal sum, there is a multiplier
$h$ in each term, and we can apply (\ref{eq:sumgrad_error}) to each term.

\emph{The first term on the right side} of (\ref{eq:disc_Ito}) becomes
\begin{multline*}
\sum_{r=1}^{n}  T_{x=S_0}^{S_r} \left(\left\{f^y(t_r,x) - f^y(t_{r-1},x)\right\}, \gamma'_0(x) \right) h \\
= \sum_{r=1}^{n} \left\{ T_{\gamma _r} \left( \nabla \phi (x - S_{r} - y) , \gamma '_0(x) \right) h
\right\} \: h^2 + O(\epsilon (h)) .
\end{multline*}
The error term is obtained since there are $t/h^2$ terms in both sums, respectively; there are
multipliers $h$ and $h^2$ in each term, respectively; and we can apply (\ref{eq:grad_error}) for each
term.

\emph{The second term on the right side} of (\ref{eq:disc_Ito}) becomes
\begin{multline*}
\sum_{r=1}^{n} \left\{ (f^y)^1 \left(t_{r-1}, S_{r-1} \right) h X_r^1 + (f^y)^2 \left(t_{r-1},
(S_{r}^{1}, S_{r-1}^2) \right) h X_r^2 \right\} \\
= \sum_{r=1}^{n} \sum_{j=0}^{r-1} \left\{ (D_1 \phi)\left(S_{r-1} - S_j - y \right) h X_r^1
+ (D_2 \phi)\left((S_{r}^{1}, S_{r-1}^2) - S_j - y \right) h X_r^2 \right\} h^2 \\
+ O(\epsilon (h)) .
\end{multline*}
Again, the error term is obtained since there are at most $t/h^2$ terms in both sums, respectively;
there are multipliers $h$ and $h^2$ in each term, respectively; and we can apply (\ref{eq:grad_error})
for each term.

Finally, \emph{the last term on the right side} of (\ref{eq:disc_Ito}) of (\ref{eq:disc_Ito})becomes
\begin{multline} \label{eq:last_term0}
\sum_{r=1}^n \left\{ \frac{(f^y)^1 \left(t_{r-1}, (S_r^1, S_{r-1}^{2}) \right) - (f^y)^1
\left(t_{r-1}, S_{r-1} \right)}{hX_r^1} \right. \\
\left. + \frac{(f^y)^2 \left(t_{r-1}, S_{r} \right) - (f^y)^2 \left(t_{r-1},(S_r^1, S_{r-1}^{2})
\right)}{hX_r^2} \right\} h^2 \\
= \sum_{r=1}^n \sum_{j=0}^{r-1} \left\{ \frac{(D_1\phi)\left((S_r^1, S_{r-1}^{2}) - S_j - y \right)
- (D_1\phi) \left(S_{r-1} - S_j - y \right)}{hX_r^1} \right. \\
\left. + \frac{(D_2\phi) \left(S_{r} - S_j - y \right) - (D_2\phi) \left((S_r^1, S_{r-1}^{2}) - S_j -
y \right)}{hX_r^2} \right\} h^4 + O(\epsilon (h)) .
\end{multline}
Here the error term is obtained because there are at most $t/h^2$ terms in both sums, respectively;
there are multipliers $h^2$ in both, respectively; each term is divided by $h$; and we can apply
(\ref{eq:grad_error}) for each term.

We need to write the last term in two different ways. The first way mimics the method applied to the
last term in the proof of Theorem \ref{th:planar_Ito}. There exist $s_{r}^{1}, s_{r}^{2} \in [-h, h]$
such that the last term equals
\begin{multline} \label{eq:last_term1}
\sum_{r=1}^n \sum_{j=0}^{r-1} \left\{ (D_{11}\phi)\left((S_{r-1}^1 + s_{r}^{1},
S_{r-1}^{2}) - S_j - y)\right) \right. \\
\left. + (D_{22}\phi) \left((S_r^1, S_{r-1}^{2} + s_{r}^{2}) - S_j - y \right)\right\} h^4
+ O(\epsilon (h)) \\
= \sum_{r=1}^n \sum_{j=0}^{r-1} (\Delta \phi)\left(S_{r-1} - S_j - y\right) \: h^4 + O(h) +
O(\epsilon(h)).
\end{multline}
Here the error term $O(h)$ is obtained when one replaces the translations $s_r^j$ by 0 in the second
partial derivatives of $\phi$, which are uniformly continuous over the bounded ball $B_{3R}(0)$.

The second way of writing the last term (\ref{eq:last_term0}) uses discrete self-intersection local
times (\ref{eq:PDSILT}), arranging the terms in (\ref{eq:last_term0}) according to $x = S_{r-1} -
S_j$:
\begin{multline} \label{eq:last_term2}
\sum_{x \in h \mathbb{Z}^2} \sum_{\mu \in \{-1, 1\}^2}  \alpha _{h}^{\mu}(t_n, x) \: u^{\mu}(x - y)
\: h^2 + O(\epsilon(h)) \\
= \sum_{x \in h \mathbb{Z}^2}  \alpha _{h}(t_n, x) \: (\Delta \phi)(x - y) \: h^2 + O(h \log^2 h) +
O(\epsilon(h)),
\end{multline}
where
\begin{multline} \label{eq:umux}
u^{\mu}(x) := \frac{(D_1\phi)(x^1+h\mu^1,x^2) - (D_1\phi)(x^1,x^2)}{h\mu^1} \\
+ \frac{(D_2\phi)(x^1+h\mu^1,x^2+h\mu^2) - (D_2\phi)(x^1+h\mu^1,x^2)}{h\mu^2} \\
= (D_{11}\phi)(x^1+s^1,x^2) + (D_{22}\phi)(x^1+h\mu^1, x^2+s^2) = (\Delta \phi)(x) + O(h),
\end{multline}
$s^1, s^2 \in [-h, h]$. In (\ref{eq:last_term2}) the error term $O(h \log^2 h)$ is obtained when one
replaces the translations $s_r^j$ by 0 in the second partial derivatives of $\phi$ in (\ref{eq:umux}),
harnessing the upper bound (\ref{eq:supalpha}) for $\alpha_{h}(t_n, x)$ and the fact that there are at
most $t_n/h^2$ non-zero terms (multiplied by $h^2$) in the summation for $x$.

Then, collecting the terms of the discrete It\^o's formula, by (\ref{eq:last_term1}) and
(\ref{eq:last_term2}) we obtain the two versions claimed in the lemma.
\end{proof}

It has to be emphasized that so far in this section all obtained formulae have been
algebraic--analytic ones, independent of any randomness. Now take a planar Brownian motion $W(t)$ and
replace the sums above by a Skorohod imbedded sequence: $S_n = B_m(n 2^{-2m}))$, with $h=2^{-m}$. Then
taking limits of the discrete Tanaka--Rosen--Yor formula as $m \to \infty$, one obtains a continuous
version of the formula \cite[Th\'eor\`eme 1]{Yor1985}.

\begin{thm}\label{th:TRY}
Suppose that $\phi$ is a $C^3$ scalar field, $W(t)$ is a Brownian motion in the plane, $W(0)=0$, and
$y \in \mathbb{R}^2$. With $m =0,1,\dots$ and $h=2^{-m}$, apply the discrete Tanaka--Rosen--Yor
formula (\ref{eq:dTRY1}) to the imbedded random walks $S_n = B_m(n 2^{-2m}))$. Then, as $m \to
\infty$, each term of the discrete formula almost surely tends to the corresponding term of the
following continuous formula, uniformly on any bounded interval $t \in [0, K]$:
\begin{multline}\label{eq:TRY}
\int_{0}^{t} \phi\left(W(t) - W(u) - y\right) \di u \\
= t \phi(-y) + \int_{0}^{t} \int_{0}^{v} (\nabla \phi)\left(W(v) - W(u) - y\right) \di u \cdot \di
W(v)
\\
+ \frac12 \int_{0}^{t} \int_{0}^{v} (\Delta \phi)\left(W(v) - W(u) - y\right) \di u \di v .
\end{multline}
The last term can be written as an almost sure limit of sums involving self-intersection local times
of imbedded random walks:
\begin{multline}\label{eq:TRYlast}
L\phi(t,y) := \int_{0}^{t} \int_{0}^{v} (\Delta \phi)\left(W(v) - W(u) - y\right) \di u \di v \\
= \lim_{m \to \infty} \sum_{x \in 2^{-m}\mathbb{Z}^2} \alpha _m(t, x) \: (\Delta \phi)(x-y) \: 2^{-2m}
.
\end{multline}
\end{thm}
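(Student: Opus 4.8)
The plan is to obtain the continuous formula \eqref{eq:TRY} termwise as an almost sure limit of the discrete formula \eqref{eq:dTRY1}, applied to the Skorohod embedded walks $S_n = B_m(n2^{-2m})$ with $h=2^{-m}$, $n=\lfloor t2^{2m}\rfloor$, $t_n = n2^{-2m}$. First I would fix an $\omega$ in the almost sure set on which $B_m$ converges uniformly to $W$ on $[0,K]$ with the rate in Corollary \ref{co:strong_d}, so that all the walks $B_m$ and the limit $W$ stay in a common ball $B_R(0)$ over $[0,K]$ for $m \ge m_0(\omega)$; then the points $S_j - S_i - y$ all lie in $B_{3R}(0)$, and $\phi$ together with its first three derivatives is uniformly continuous there, which makes $\epsilon(h)=\epsilon_\phi(2^{-m},R) \to 0$. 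Note $h\log^2 h = 2^{-m}m^2 \to 0$ as well, so the two error terms in \eqref{eq:dTRY1}--\eqref{eq:dTRY2} vanish uniformly on $[0,K]$.

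Next I would handle the deterministic (non-stochastic-integral) terms one by one, exactly in the spirit of the proof of Theorem \ref{th:planar_Ito}. For the left side of \eqref{eq:dTRY1}: the inner trapezoidal sum $T_{\gamma_n}(\nabla\phi(x-S_j-y),\gamma_0'(x))h$ approximates $\int_{\gamma_n}(\nabla\phi(x-S_j-y),\gamma_0'(x))\di s = \phi(S_n - S_j - y) - \phi(-S_j-y)$ with trapezoidal error $O(2^{-2m})$ by \eqref{eq:trapez_error}, and after multiplying by $h^2$ and summing $j=0,\dots,n$ (at most $t2^{2m}+1$ terms) this becomes $\sum_{j} \phi(S_n - S_j - y)h^2 - \sum_j \phi(-S_j-y)h^2 + O(2^{-m})$; the first sum is a Riemann sum converging to $\int_0^t \phi(W(t)-W(u)-y)\di u$ (replacing $S_n$ by $W(t)$ and $S_j$ by $W(u)$ via \eqref{eq:strong_d} and uniform continuity of $\phi$), and the second converges to $\int_0^t \phi(-W(u)-y)\di u$, which is the spurious term that I expect to cancel. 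For the first term on the right side of \eqref{eq:dTRY1}, the same computation with $S_r$ in place of $S_n$ gives $\sum_{r=1}^n \big(\phi(S_r - S_r - y) - \phi(-S_r - y)\big)h^2 + O(2^{-m}) = t\phi(-y) - \int_0^t \phi(-W(u)-y)\di u + o(1)$, so the $-\int_0^t\phi(-W(u)-y)\di u$ terms on the two sides indeed cancel, leaving $t\phi(-y)$ on the right. For the $\frac12$-term I would combine \eqref{eq:dTRY2} with the fact that $\sum_{x}\alpha_h(t_n,x)(\Delta\phi)(x-y)h^2$ is, by the very definition \eqref{eq:TDSILT} of $\alpha_h$ as $h^2\sum_{i<j}\mathbf 1_{\{S_j-S_i=x\}}$, equal to $h^4\sum_{i=0}^{n-1}\sum_{j=i}^{n-1}(\Delta\phi)(S_j-S_i-y)$, a double Riemann sum over the triangle $V_t=\{0\le u\le v\le t\}$ converging to $\int_0^t\int_0^v(\Delta\phi)(W(v)-W(u)-y)\di u\di v$; this simultaneously proves \eqref{eq:TRYlast}, using \eqref{eq:ordSILT}, $\sup_x\alpha_m(t,x)=O(m^2)$, to control the discretization. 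For the ``stochastic'' term, the sum $\sum_{r=1}^n\sum_{j=0}^{r-1}\{(D_1\phi)(S_{r-1}-S_j-y)hX_r^1 + (D_2\phi)((S_r^1,S_{r-1}^2)-S_j-y)hX_r^2\}h^2$ is, setting $F_m(u,x):=\sum_{j:\,j2^{-2m}\le u}(\nabla\phi)(x-S_j-y)\,2^{-2m}$, exactly the stochastic sum $(F_m(\cdot,B_m)\cdot B_m)_t^m$ in the sense of \eqref{eq:stocsum1}, and $F_m(u,\cdot)$ is the gradient-type integrand associated to the scalar field $g^y$, so Theorem \ref{th:planar_Ito} (or rather the stochastic-sum convergence part of its proof) shows it converges almost surely, uniformly on $[0,K]$, to $\int_0^t\big(\int_0^v(\nabla\phi)(W(v)-W(u)-y)\di u\big)\cdot\di W(v)$.

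The cleanest way to organize this is actually to recognize that \eqref{eq:dTRY1} is precisely the discrete It\^o formula \eqref{eq:disc_Ito_planar} applied to $g^y(t,x)=\sum_{j\le t/h^2}\phi(x-S_j-y)h^2$ (this is how Lemma \ref{le:disc_TRY} is proved), so I can invoke Theorem \ref{th:planar_Ito} almost verbatim: the left side converges to $g^y$-type quantities, the time-derivative term $\int_0^t(D_t g^y)\di u$ produces $\int_0^t\phi(W(u)-S_{\cdot}-y)$-type contributions whose continuous limit accounts for the difference between $\int_0^t\phi(W(t)-W(u)-y)\di u$ and $t\phi(-y)+\int\int(\nabla\phi)\cdot\di W + \frac12\int\int\Delta\phi$, and the Laplacian term becomes the self-intersection local time expression. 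The one genuine subtlety, and what I expect to be the main obstacle, is that $g^y$ is itself random (it depends on the path of $W$ through the $S_j$), and its mesh-$h$ discretization, its $C^{1,3}$-bounds, and the ball $B_{3R}(0)$ all depend on $\omega$; so I must check that the estimates in Theorem \ref{th:planar_Ito} are applied pathwise with the $\omega$-dependent radius $R(\omega)$ and with derivative bounds $M_2, M_3$ of $\phi$ over $B_{3R}(0)$, and that the $O(\epsilon(h))$ and $O(h\log^2 h)$ terms — which come with an extra factor $t/h^2 = O(2^{2m})$ worth of summands but also extra powers of $h$ — genuinely vanish, which they do since $2^{2m}\cdot 2^{-m}\cdot 2^{-m}\epsilon(2^{-m}) = \epsilon(2^{-m})\to 0$ and $2^{2m}\cdot 2^{-m}\cdot m^2\cdot 2^{-2m} = m^2 2^{-m}\to 0$. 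Once these bookkeeping points are verified, termwise convergence of all the non-stochastic terms forces convergence of the stochastic term to the stochastic integral, and \eqref{eq:TRY} and \eqref{eq:TRYlast} follow.
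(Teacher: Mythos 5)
Your proposal is correct and follows essentially the same route as the paper: termwise passage to the limit in (\ref{eq:dTRY1}) via Theorem \ref{th:planar_Ito}, with the cancellation of the spurious $\int_0^t\phi(-W(u)-y)\di u$ appearing on both sides, the identification $\int_0^t\phi(W(u)-W(u)-y)\di u=t\phi(-y)$, the Riemann-sum argument for the Laplacian term together with (\ref{eq:dTRY2}) for (\ref{eq:TRYlast}), and convergence of the stochastic sum forced by convergence of all other terms. The only detail you omit, which the paper dispatches in one sentence, is that (\ref{eq:dTRY1}) requires $y\in 2^{-m}\mathbb{Z}^2$, so a general $y$ must first be replaced by a nearest grid point $y_m$ with $|y-y_m|\le 2^{-m}$, which is harmless since all terms are continuous in $y$.
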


\begin{proof}
To prove the almost sure convergence of the terms in (\ref{eq:TRY}), it is essentially enough to apply
Theorem \ref{th:planar_Ito}. The only new element here is that in each term there is a Riemann sum of
a continuous function, that converges to the corresponding Riemann integral for almost every path as
$m \to \infty$. The value of the parameter $y \in \mathbb{R}^2$ should also be approximated by a
closest point $y_m \in 2^{-m}\mathbb{Z}^2$, for which $|y - y_m| \le 2^{-m}$. This does not cause any
problem, since all functions in (\ref{eq:TRY}) are continuous in $y$.

Thus the limit of the term on the left side of (\ref{eq:dTRY1}) is
\[
\int_{0}^{t} \left\{ \phi\left(W(t) - W(u) - y\right) - \phi\left(- W(u) - y\right)\right\} \di u .
\]

The limit of the first term on the right side is
\[
\int_{0}^{t} \left\{ \phi\left(W(u) - W(u) - y\right) - \phi\left(- W(u) - y\right)\right\} \di u.
\]
The extra $\int_{0}^{t} - \phi\left(- W(u) - y\right) \di u$ term appears on both sides, so can be
canceled. Observe that $\int_{0}^{t} \phi\left(W(u) - W(u) - y\right) = t \phi(-y)$.

The limits of the second and the last terms of the right side of (\ref{eq:dTRY1}) are clearly the
corresponding ones in (\ref{eq:TRY}). The equality in (\ref{eq:TRYlast}) clearly follows from the
equality (\ref{eq:dTRY2}).
\end{proof}

Since $\phi \in C^3(\mathbb{R}^2)$ and $W(t)$ is a.s. continuous, it follows that, almost surely, the
term on the left side and the first and the last terms on the right side of (\ref{eq:TRY}) are
continuous functions of $(t, y) \in \mathbb{R_+} \times \mathbb{R}^2$. This implies the same
conclusion for the second, stochastic integral term as well.

%%%%%%%%%%%%%%%%%%%%%%%%%%%%%%%%%%%%%%%%%%%%%%%%%%%%%%%%%%%%%%%%%%%

\section{A definition of planar self-intersection local time}
\label{sec:def_SILT}

A possible definition of ordinary local time in one spatial dimension uses a special case of Tanaka's
formula applied with the function $\phi(x) = x \vee 0$, which is a fundamental solution of the
one-dimensional Laplacian $ d^2/dx^2$; see this kind of definition for example in \cite[p.
117]{SV1979} and \cite[Section 7.2]{CW1990}. The definition of self-intersection local time presented
below is a suitable planar modification of it. This means that our definition uses a special case of
planar Tanaka--Rosen--Yor formula with the function $\phi(x) = \log |x|$, which is a fundamental
solution of the planar Laplacian $\Delta = D_{11} + D_{22}$, ignoring a constant multiplier.

Let $W(t)$ be a planar Brownian motion, $W(0)=0$.  For each $x \in \mathbb{R}^2$ and $\delta > 0$, we
define an everywhere continuously differentiable approximation of $\log|x|$ by
\[
\phi^{\delta }(x) := \left\{\begin{array}{ll}
\frac{|x|^2 - \delta ^2}{2 \delta ^2} + \log \delta & \text{for } |x| \le \delta,  \\
\log|x| & \text{for } |x| \ge \delta .
\end{array} \right.
\]
Then
\begin{equation}\label{eq:nablaphi}
(\nabla \phi^{\delta })(x) = \left\{\begin{array}{ll}
\frac{x}{\delta ^2} & \text{for } |x| \le \delta,  \\
\frac{x}{|x|^2} & \text{for } |x| \ge \delta ;
\end{array} \right.
\end{equation}
and
\[
(\Delta \phi^{\delta })(x) = \left\{\begin{array}{ll}
\frac{2}{\delta ^2} & \text{for } |x| < \delta,  \\
0 & \text{for } |x| > \delta .
\end{array} \right.
\]
Note that $(\Delta \phi^{\delta })(x)$ is not defined for $|x| = \delta $, but we set it to be $0$
there.

Since $\phi^{\delta }$ is not $C^3$, Theorem \ref{th:TRY} is not directly applicable to it. However,
by a standard procedure, taking a convolution with a sequence of $C^{\infty}$ functions $q_n$ with
compact support shrinking to $\{0\}$, and then taking a limit as $n \to \infty$, solves this problem.
For sake of explicitness, let $q(z) = c \exp\left(-(1-|z|^2)^{-1}\right)$ for $|z| < 1$ and $0$
otherwise, where the constant $c$ is chosen so that $\int_{\mathbb{R}^2} q(z) \di z =1$. Put $q_n(z) =
n^2 q(nz)$ and $\phi_{n}^{\delta } = \phi ^{\delta } \ast q_n$ $(n \ge 1)$.

Then $\phi_{n}^{\delta } \in C^{\infty}(\mathbb{R}^2)$; $\phi_{n}^{\delta } \to \phi^{\delta }$,
$\nabla \phi_{n}^{\delta } \to \nabla \phi^{\delta }$ both uniformly in $\mathbb{R}^2$; while $\Delta
\phi_{n}^{\delta } \to \Delta \phi^{\delta }$ pointwise except for $|x| = \delta $. Thus one can apply
Theorem \ref{th:TRY} to $\phi_{n}^{\delta }$, and take a limit of the terms as $n \to \infty$. The
resulting formula is
\begin{multline} \label{eq:spec_TRY}
\int_{0}^{t} \phi^{\delta }\left(W(t) - W(u) - y \right) \di u \\
= t \phi^{\delta }(y)
+ \int_{0}^{t} \int_{0}^{v} (\nabla \phi^{\delta })\left(W(v)-W(u)-y\right) \di u \cdot \di W(v) \\
+ \frac{1}{\delta ^2} \; \lambda \left( \{(u,v) \in V_t : |W(v)-W(u)-y| < \delta \} \right) ,
\end{multline}
where $\lambda $ denotes planar Lebesgue measure, $V_t = \{(u,v) : 0 \le u \le v \le t \}$, and $y \in
\mathbb{R}^2$. It also follows that each term here is an almost surely continuous function of $(t,y)$:
this is clear for each term except for the second, stochastic integral term on the right side, but
then it follows for this term too.

It is important that, by (\ref{eq:TRYlast}), the last term can be written as
\begin{equation}\label{eq:speclast}
\frac12 L\phi^{\delta}(t,y) = \frac{1}{\delta ^2} \; \lim_{m \to \infty} \sum_{x \in
2^{-m}\mathbb{Z}^2 \cap B_{\delta} (y)} \alpha _m(t, x) \: 2^{-2m},
\end{equation}
where $B_{\delta} (y)$ is the closed disc centered at $y$ with radius $\delta $.

Rosen \cite{Ros1983} suggested the following definition of \emph{self-intersection local time of
planar Brownian motion} $W$. Define the following \emph{occupation measure} for plane Borel sets $A$
and time $t\ge 0$:
\[
\mu_t(A) := \lambda\left(\{(u,v): 0\le u \le v \le t, W(v)-W(u) \in A \} \right) ,
\]
where $\lambda$ is planar Lebesgue measure. Rosen \cite{Ros1983} then proved that the
self-intersection local time $\alpha(t,x) := \frac{\di \mu_t}{\di \lambda}(x)$ a.s. exists when $x \ne
0$.

An alternative approach is to consider the \emph{symmetric derivative} of $\mu_t$ w.r.t. $\lambda$:
\begin{multline}\label{eq:defSILT}
\alpha (t, y) := \lim_{\delta \to 0^+} \frac{\mu_t(B_{\delta}(y))}{\lambda(B_{\delta}(y))} \\
= \lim_{\delta \to 0^+} \frac{1}{\pi \delta ^2} \; \lambda \left( \{(u,v) : 0 \le u \le v \le t,
|W(v)-W(u)-y| < \delta \} \right)
\end{multline}
where $B_{\delta}(y)$ denotes the disc centered at $y$ with radius $\delta$. Among other things, the
next theorem establishes the a.s. existence of this finite symmetric derivative for any $t \ge 0$ and
$y \ne 0$. It is well-known (see e.g. Rudin \cite{Rud1987}) that when there exists a finite symmetric
derivative of $\mu_t$ w.r.t. $\lambda$ except for the point $0$, then $\mu_t$ is absolutely continuous
w.r.t. $\lambda$ on $\mathbb{R}^2\setminus\{0\}$ and the symmetric derivative equals the
Radon--Nikodym derivative; the support of the singular part of $\mu_t$ can only be the point $0$.

\begin{thm} \label{th:spec_TRY}
The terms of (\ref{eq:spec_TRY}) almost surely converge to the corresponding terms of the following
Tanaka--Rosen--Yor formula as $\delta \to 0^+$, when $y \ne 0$:
\begin{eqnarray} \label{eq:TRYmain}
\lefteqn{\int_{0}^{t} \log |W(t) - W(u) - y| \di u} \nonumber \\
&=& t \; \log |y| + \int_{0}^{t} \int_{0}^{v} \frac{W(v)-W(u)-y}{|W(v)-W(u)-y|^2} \di u \cdot \di W(v)
+ \pi  \; \alpha(t, y) ,
\end{eqnarray}
cf. \cite[(2.j)]{Yor1985}. Moreover, all terms, including $\alpha(t, y)$, are a.s. continuous in
$(t,y)$ when $y \ne 0$.

It also follows that the self-intersection local time $\alpha(t, y)$ of planar Brownian motion is the
almost sure limit of averages of self-intersection local times $\alpha_m(t, y)$ of imbedded random
walks:
\begin{multline}\label{eq:SILTlim}
\alpha(t, y) = \lim_{\delta \to 0^+}  \lim_{m \to \infty } \frac{1}{\pi \delta^2} \sum_{ x \in 2^{-m}
\mathbb{Z}^2 \cap B_{\delta} (y)} \alpha _m(t, x) \: 2^{-2m} \\
= \lim_{\delta \to 0^+}  \lim_{m \to \infty } \frac{1}{\pi \delta^2} \int_{B_{\delta }(y)} \alpha_m(t,
x) \di x \qquad (y \ne 0).
\end{multline}
\end{thm}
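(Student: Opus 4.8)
The plan is to let $\delta\to0^+$ in the identity \eqref{eq:spec_TRY}, which holds almost surely for every $\delta>0$, and to check that each of the four terms converges to the corresponding term of \eqref{eq:TRYmain}. The left side is handled by monotone convergence: a short computation from the definitions shows $\phi^\delta(x)$ is nonincreasing in $\delta$ and $\phi^\delta(x)\downarrow\log|x|$ as $\delta\downarrow0$ for every $x\neq0$ (one has $\partial_\delta\phi^\delta(x)=(\delta^2-|x|^2)/\delta^3\ge0$ for $\delta\ge|x|$, while $\phi^\delta(x)=\log|x|$ already for $\delta\le|x|$). Since $u\mapsto\phi^{\delta_0}(W(t)-W(u)-y)$ is bounded and continuous on $[0,K]$, monotone convergence gives $\int_0^t\phi^\delta(W(t)-W(u)-y)\di u\downarrow\int_0^t\log|W(t)-W(u)-y|\di u$, the latter a.s.\ finite by the local integrability of $\log$ in the plane together with a standard estimate on the occupation density of $u\mapsto W(t)-W(u)$ near $y$ (here $y\neq0$ is what makes the contribution from $u$ near $t$ harmless). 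The first term on the right side is trivial: for $\delta<|y|$ one has $\phi^\delta(y)=\log|y|$ exactly, so $t\,\phi^\delta(y)\equiv t\log|y|$.

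The crux is the remaining two terms. Write the last term of \eqref{eq:spec_TRY} as $\pi A_\delta(t)$ with $A_\delta(t):=\mu_t(B_\delta(y))/\lambda(B_\delta(y))$, $\mu_t$ the occupation measure. I would first show that the stochastic integral $M_\delta(t):=\int_0^t\int_0^v(\nabla\phi^\delta)(W(v)-W(u)-y)\di u\cdot\di W(v)$ is Cauchy in $L^2$ as $\delta\to0^+$, and then read off from \eqref{eq:spec_TRY} and the convergences just established that $A_\delta(t)$ converges as well. By the It\^o isometry, $\mathbb{E}\,|M_\delta(K)-M_{\delta'}(K)|^2$ reduces to $\mathbb{E}\int_0^K|g_\delta(v)-g_{\delta'}(v)|^2\di v$ where $g_\delta(v)=\int_0^v(\nabla\phi^\delta)(W(v)-W(u)-y)\di u$; expanding the inner square gives a double integral in $(u_1,u_2)$ against the joint Gaussian density of $(W(v)-W(u_1),W(v)-W(u_2))$. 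From \eqref{eq:nablaphi} one gets the pointwise bound $|(\nabla\phi^\delta-\nabla\phi^{\delta'})(w)|\le|w|^{-1}\ind{\{|w|\le\delta\vee\delta'\}}$, whose square $|w|^{-2}$ is \emph{not} integrable over the plane, so a naive estimate diverges; the saving features are that, because $y\neq0$ and $\delta<|y|/2$, the heat kernel $p_{v-u_j}$ is bounded by a $y$-dependent constant on the region $\{|W(v)-W(u_j)-y|\le\delta\}$ uniformly in $v-u_j>0$, and that one still has the inner time average $\int_0^v\!\di u$. Using $\int_{|w|\le\delta}p_s(w)|w|^{-1}\di w\le\min(Cs^{-1/2},2\delta s^{-1})$ and integrating in $u_1,u_2,v$ then yields a bound of order $\delta^2\log(1/\delta)$. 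Hence $M_\delta(t)\to M_\infty(t)$ in $L^2$; combining Doob's maximal inequality with Borel--Cantelli along a geometric sequence $\delta_k=\rho^{k}$ upgrades this to a.s.\ convergence, uniform in $t\in[0,K]$, and the monotonicity of $\delta\mapsto\mu_t(B_\delta(y))$ sandwiches $A_\delta(t)$ between $\rho^{-2}A_{\delta_{k+1}}(t)$ and $\rho^2A_{\delta_k}(t)$ for $\delta\in[\delta_{k+1},\delta_k]$, so letting $\rho\downarrow1$ gives a.s.\ convergence of $A_\delta(t)$ to a limit $\alpha(t,y)$. (Alternatively, one may simply invoke Rosen's theorem \cite{Ros1983} that $\di\mu_t/\di\lambda$ exists and is continuous off the origin, which identifies $\lim_\delta A_\delta(t)$ with that density and leaves $M_\delta$ to converge as the leftover term; I prefer the self-contained route, in the elementary spirit of the paper.)

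With these limits, \eqref{eq:spec_TRY} becomes \eqref{eq:TRYmain} with $\pi\alpha(t,y)$ as its last term, so the finite symmetric derivative $\alpha(t,y)=\lim_{\delta\to0^+}\mu_t(B_\delta(y))/\lambda(B_\delta(y))$ exists a.s.\ for $y\neq0$; by the measure-theoretic fact cited from \cite{Rud1987}, $\mu_t$ is then absolutely continuous on $\mathbb{R}^2\setminus\{0\}$ with density $\alpha(t,\cdot)$, reconciling with Rosen. For joint continuity in $(t,y)$ off the origin: the left side and $t\log|y|$ are continuous (the former by dominated convergence, again using local integrability of $\log$), and the limiting stochastic integral $M_\infty(t,y)$ is continuous because each $M_\delta(t,y)$ is (a continuous martingale in $t$, continuous in $y$) and $M_\delta\to M_\infty$ locally uniformly in $(t,y)$; the modulus in $y$ comes from the same It\^o-isometry estimates applied to $|\nabla\phi^\delta(\cdot-y)-\nabla\phi^\delta(\cdot-y')|$, uniformly in $\delta$, via Kolmogorov's criterion. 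Thus $\alpha(t,y)$, a difference of continuous terms, is continuous on $\mathbb{R}_+\times(\mathbb{R}^2\setminus\{0\})$. Finally \eqref{eq:SILTlim}: for fixed $\delta$, $\pi A_\delta(t)=\tfrac12 L\phi^\delta(t,y)=\tfrac1{\delta^2}\lim_{m\to\infty}\sum_{x\in 2^{-m}\mathbb{Z}^2\cap B_\delta(y)}\alpha_m(t,x)2^{-2m}$ by \eqref{eq:speclast}; letting $\delta\to0^+$ gives the first equality. The second follows since, by the interpolation formula \eqref{eq:int_pol_int}, the Riemann sum and $\int_{B_\delta(y)}\alpha_m(t,x)\di x$ differ by $O(2^{-m})$ times a boundary contribution that is controlled by $\sup_x\alpha_m(t,x)=O(m^2)$ from \eqref{eq:ordSILT} and hence vanishes as $m\to\infty$.

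The main obstacle is the second paragraph: showing the stochastic integral term has a limit. Its integrand involves the borderline-nonintegrable Cauchy-type kernel $w/|w|^2=\nabla\log|w|$, and it is exactly the hypothesis $y\neq0$ (which confines the singular part of the integrand to a region where the Gaussian transition density stays bounded) together with the extra smoothing from the inner time integral that makes the $L^2$ estimate work; converting that $L^2$ convergence into the a.s., locally uniform, and jointly continuous statements the theorem asserts—via the monotonicity of $\delta\mapsto\mu_t(B_\delta(y))$ and Kolmogorov's continuity criterion—is the remaining technical labour.
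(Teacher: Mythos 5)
Your outline is sound and reaches the theorem, but for the decisive term --- the $\delta\to 0^+$ limit of the stochastic integral --- you take a genuinely different route from the paper. The paper's proof rests on the exact identity of Lemma \ref{le:gauss}: $\nabla\phi^{\delta}$ \emph{is} the average of the Cauchy kernel over the disc $B_{\delta}(0)$, so by Fubini the stochastic integral term equals $\frac{1}{\pi\delta^2}\int_{B_{\delta}(y)}Y(t,z)\di z$ as in (\ref{eq:secondterm}); once Lemma \ref{le:contversion} provides a version of $Y(t,z)$ continuous in $z\ne 0$, the mean value theorem gives both the convergence and the identification of the limit as $Y(t,y)$ in one stroke. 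You instead prove an $L^2$-Cauchy estimate $\mathbb{E}|M_{\delta}(K)-M_{\delta'}(K)|^2=O(\delta^2\log(1/\delta))$ by It\^o isometry (your heat-kernel computation is correct, and the role of $y\ne 0$ in keeping $p_s$ bounded on $B_{\delta}(y)$ is exactly right), then upgrade to a.s.\ uniform convergence along $\delta_k=\rho^k$ via Doob and Borel--Cantelli, and recover the full limit of the occupation-density term by the $\rho^{\pm2}$ sandwich. This is a legitimate and arguably more self-contained alternative; what the paper's route buys is that continuity of the limit in $y$ and the convergence as $\delta\to0^+$ come from the \emph{same} lemma, whereas your route still owes the identification of $M_\infty$ with the stochastic integral appearing in (\ref{eq:TRYmain}) (one more application of the same isometry estimate, comparing $\nabla\phi^{\delta}$ with the limiting kernel) --- say a word about that.

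The one genuine soft spot is the continuity in $y$. You propose to get the modulus in $y$ ``from the same It\^o-isometry estimates \dots via Kolmogorov's criterion.'' The isometry only yields second moments, and for a parameter ranging over $\mathbb{R}^2$ Kolmogorov--Chentsov requires $\mathbb{E}|Y(t,y)-Y(t,y')|^p\le C|y-y'|^{2+\varepsilon}$ with exponent strictly larger than the parameter dimension $2$; a bound of the form $|y-y'|^2\log^k(1/|y-y'|)$ is not enough. This is precisely why the paper's Lemma \ref{le:contversion}(b) works with \emph{third} moments via the Burkholder--Davis--Gundy inequality (and fourth moments for joint $(t,y)$-continuity), obtaining $\mathbb{E}|Y(t,y)-Y(t,y')|^3\le C\log^4\frac{1}{|y-y'|}\,|y-y'|^3$, i.e.\ exponent $2+\beta$ for any $\beta<1$. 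Your argument needs the same upgrade from isometry to BDG and a higher moment; the covering/heat-kernel estimates you already deploy for the $L^2$ bound do extend, but as written the continuity step would fail. Everything else --- the monotone convergence on the left side, the identification of $\alpha$ with the symmetric derivative via \cite{Rud1987}, and the passage from (\ref{eq:speclast}) to (\ref{eq:SILTlim}) using (\ref{eq:int_pol_int}) and (\ref{eq:ordSILT}) (this is the paper's Lemma \ref{le:conv}) --- matches the paper.
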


\begin{proof}
The well-known properties of planar Brownian motion imply that for any $t > 0$ and $y \ne 0$, $\inf
\{|W(t) - W(u) - y| : 0 \le u \le t \} >  0$, with probability 1. Hence it follows the almost sure
convergence of the left side of (\ref{eq:spec_TRY}) as $\delta \to 0^+$ when $y \ne 0$. Moreover, the
integrand converges monotonically as $\delta \to 0^+$, so the limit and the integral can be
interchanged. It also follows that the left side of (\ref{eq:TRYmain}) is a continuous function of
$(t,y)$ when $y \ne 0$.

The convergence and the continuity of the first term on the right side is trivial when $y \ne 0$. .

By Lemma \ref{le:gauss} in the Appendix, the second term on the right side of (\ref{eq:spec_TRY}) is
\begin{multline} \label{eq:secondterm}
\int_{0}^{t} \int_{0}^{v} (\nabla \phi^{\delta })\left(W(v)-W(u)-y\right) \di u \cdot \di W(v)  \\
= \frac{1}{\pi \delta ^2} \int_{0}^{t} \int_{0}^{v} \int_{B_{\delta }(y)} \frac{W(v)-W(u)-z}
{|W(v)-W(u)-z|^2}
\: \di z \di u \cdot \di W(v) \\
= \frac{1}{\pi \delta ^2} \int_{B_{\delta }(y)} \left\{ \int_{0}^{t} \int_{0}^{v}
\frac{W(v)-W(u)-z}{|W(v)-W(u)-z|^2} \:  \di u \cdot \di W(v) \right\} \di z .
\end{multline}
The interchange of integrations is allowed by an extension of Fubini theorem. (Remember that by Lemma
\ref{le:disc_TRY}, the stochastic integral is an almost sure limit of discrete sums.) By Lemma
\ref{le:contversion} in the Appendix the stochastic integral has a continuous version for $y \ne 0$.
Thus by the mean value theorem of integrals, (\ref{eq:secondterm}) has an almost sure limit if $y \ne
0$ as $\delta \to 0^+$, namely the one stated in the theorem. By Lemma \ref{le:contversion}(c), the
limit is a continuous function of $(t,y)$ when $y \ne 0$.

The above limits imply that the last term on the right side of (\ref{eq:spec_TRY}) has an almost sure
limit $\pi \alpha (t, y)$ if $y \ne 0$. (\ref{eq:SILTlim}) follows from this by (\ref{eq:speclast})
and Lemma \ref{le:conv} in the Appendix. It also follows from the above arguments that $\alpha(t, y)$
is continuous in $(t,y)$ when $y \ne 0$.

\end{proof}

Formula (\ref{eq:SILTlim}) is the definition of planar self-intersection local time which has been the
main objective of the present paper. It is an open question if the limits in (\ref{eq:SILTlim}) can be
interchanged; then one would get the more impressive almost sure limit $\alpha(t, y) =  \lim_{m \to
\infty } \alpha _m(t, y)$.

The most interesting question is `What happens to the planar self-intersection local time
$\alpha(t,y)$ when $y \to 0$?' It was discovered by Varadhan in 1968 that $\alpha$ goes to $\infty$
then, and, in fact, it has a logarithmic singularity at 0. So one can introduce \emph{renormalized
self-intersection local time} $\gamma$ by the formula
\[
\gamma (t, y) = \left\{
\begin{array}{lll}
\alpha(t, y) - \frac{t}{\pi} \log\frac{1}{|y|} & \text{when} & y \ne 0, \\
\lim_{y \to 0} \alpha(t, y) - \frac{t}{\pi} \log\frac{1}{|y|} & \text{when} & y = 0 .
\end{array}  \right.
\]
It was shown by Le Gall \cite{LeG1985} that the limit above exists both almost surely and in $L^2$.
Below we will give an alternative proof of it, together with a Tanaka--Rosen--Yor formula for
$\gamma$, based on Theorem \ref{th:spec_TRY}. Yor \cite[(2.k)]{Yor1985} proved convergence in
probability by a similar approach.

Prior to that, let us give the expectation of $\gamma$, cf. \cite{LeG1985}.
\begin{cor} \label{co:egamma}
\begin{equation}\label{eq:egamma}
\mathbb{E} \gamma(t,y) = \left\{
\begin{array}{lll}
\frac{t}{\pi} \log|y| - \frac{|y|^2+2t}{4 \pi} \textup{Ei}\left(-\frac{|y|^2}{2t}\right)
- \frac{t}{2 \pi} e^{-\frac{|y|^2}{2t}} & \text{when} & y \ne 0, \\
\frac{t}{2 \pi} (\log(2t) - C - 1) & \text{when} & y = 0 ,
\end{array}  \right.
\end{equation}
where \textup{Ei} denotes the exponential integral function and $C$ is Euler's constant. Thus this
expectation is finite and continuous for every $(t,y) \in \mathbb{R}_+\times\mathbb{R}^2$.
\end{cor}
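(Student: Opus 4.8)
The plan is to reduce everything to computing $\mathbb{E}\,\alpha(t,y)$, since $\mathbb{E}\,\gamma(t,y)=\mathbb{E}\,\alpha(t,y)+\frac{t}{\pi}\log|y|$ for $y\ne0$. By Theorem \ref{th:spec_TRY} the occupation measure $\mu_t$ is a.s. absolutely continuous on $\mathbb{R}^2\setminus\{0\}$ with density $\alpha(t,\cdot)$, so $\mu_t(A)=\int_A\alpha(t,z)\di z$ a.s. for every Borel set $A$ bounded away from $0$. Taking expectations and using Tonelli's theorem together with the elementary computation $\mathbb{E}\,\mu_t(A)=\int_0^t\int_0^v\PP(W(v)-W(u)\in A)\di u\di v=\int_A\bigl(\int_0^t\int_0^v p_{v-u}(z)\di u\di v\bigr)\di z$, where $p_s(z):=(2\pi s)^{-1}e^{-|z|^2/(2s)}$ is the transition density of planar Brownian motion, one obtains $\mathbb{E}\,\alpha(t,y)=h_t(y):=\int_0^t\int_0^v p_{v-u}(y)\di u\di v$ for a.e. $y\ne0$. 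Since $h_t$ is continuous off $0$ and, as I would argue, $y\mapsto\mathbb{E}\,\alpha(t,y)$ is also continuous off $0$ — using the a.s. continuity of $\alpha(t,\cdot)$ on $\mathbb{R}^2\setminus\{0\}$ from Theorem \ref{th:spec_TRY} together with a dominated convergence / uniform integrability bound on $\alpha(t,\cdot)$ near a fixed $y\ne0$ — this identity in fact holds for every $y\ne0$.

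Next I would evaluate $h_t(y)$. The substitution $s=v-u$ followed by interchanging the order of integration gives $h_t(y)=\int_0^t (t-s)\,p_s(y)\di s=\frac{1}{2\pi}\int_0^t\frac{t-s}{s}\,e^{-|y|^2/(2s)}\di s$. Writing $a:=|y|^2/2$ and substituting $u=a/s$ turns the two resulting one-dimensional integrals into exponential integrals: $\int_0^t s^{-1}e^{-a/s}\di s=-\textup{Ei}(-a/t)$, and, after one integration by parts, $\int_0^t e^{-a/s}\di s=t\,e^{-a/t}+a\,\textup{Ei}(-a/t)$. Hence $h_t(y)=\frac1{2\pi}\bigl[-(t+a)\,\textup{Ei}(-a/t)-t\,e^{-a/t}\bigr]$; substituting back $a=|y|^2/2$ and $a/t=|y|^2/(2t)$ gives $\mathbb{E}\,\alpha(t,y)=-\frac{|y|^2+2t}{4\pi}\textup{Ei}\!\left(-\frac{|y|^2}{2t}\right)-\frac{t}{2\pi}e^{-|y|^2/(2t)}$, and adding $\frac{t}{\pi}\log|y|$ yields the claimed formula for $\mathbb{E}\,\gamma(t,y)$ when $y\ne0$.

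For $y=0$ I would invoke Le Gall's result, cited above, that $\gamma(t,y)\to\gamma(t,0)$ in $L^2$, hence in $L^1$, as $y\to0$, so that $\mathbb{E}\,\gamma(t,0)=\lim_{y\to0}\mathbb{E}\,\gamma(t,y)$. Inserting the small-argument expansion $\textup{Ei}(-x)=C+\log x+O(x)$ (with $C$ Euler's constant) into the $y\ne0$ formula and using $|y|^2\log|y|\to0$, the two $\pm\frac{t}{\pi}\log|y|$ contributions cancel and the limit collapses to $\frac{t}{2\pi}\bigl(\log(2t)-C-1\bigr)$. Finiteness and continuity on all of $\mathbb{R}_+\times\mathbb{R}^2$ then follow since $\textup{Ei}$ is continuous on $(-\infty,0)$, $x\,\textup{Ei}(-x)\to0$ as $x\to0^+$, and every term vanishes as $t\to0^+$; in particular the closed form for $y\ne0$ extends continuously across $y=0$ with the value just computed. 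The integral evaluation in the middle paragraph is routine; the genuinely delicate points are the pointwise (not merely a.e.) identification $\mathbb{E}\,\alpha(t,y)=h_t(y)$ for fixed $y\ne0$ and the interchange $\mathbb{E}\,\gamma(t,0)=\lim_{y\to0}\mathbb{E}\,\gamma(t,y)$, both of which rest on $L^1$/uniform integrability control — near a fixed $y\ne0$ for $\alpha$, and near $0$ for $\gamma$, the latter supplied by the cited $L^2$ convergence.
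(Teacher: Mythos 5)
Your route is genuinely different from the paper's and the computation itself is correct, but the one step you yourself flag as delicate is exactly the step the paper's argument is designed to avoid. The paper simply takes expectations in the limiting Tanaka--Rosen--Yor formula (\ref{eq:TRYmain}): the stochastic integral $Y(t,y)$ has mean zero by Lemma \ref{le:contversion}(a), and $\mathbb{E}X(t,y)$ is evaluated in closed form in Lemma \ref{le:Xprop}(a), so $\mathbb{E}\gamma(t,y)=\frac{1}{\pi}\mathbb{E}X(t,y)$ and both lines of (\ref{eq:egamma}) are read off at once (the Ei computation lives in the Appendix, as an angular integral reducing to $\log(r\vee\rho)$, rather than in your time integral $\int_0^t(t-s)p_s(y)\di s$ --- the two are of course the same function). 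You instead compute $\mathbb{E}\alpha(t,y)$ as the expected occupation density $h_t(y)$; your evaluation of $h_t$ via the substitution $u=a/s$ and integration by parts is correct, and your $y=0$ limit via the expansion $\textup{Ei}(-x)=C+\log x+O(x\log x)$ is also correct and, if anything, more explicit than the paper's one-sentence proof. What your route does not supply is the upgrade from ``$\mathbb{E}\alpha(t,\cdot)=h_t$ a.e.'' to ``for every $y\ne 0$'': the uniform integrability of $\alpha(t,y')$ for $y'$ near a fixed $y\ne 0$ is not free, because the a.s. bounds in the paper (e.g. (\ref{eq:ordSILT})) are not moment bounds. The natural way to obtain it is to write $\pi\alpha(t,\cdot)=X(t,\cdot)-t\log|\cdot|-Y(t,\cdot)$ and invoke the $L^3$ control of Lemmas \ref{le:contversion}(b) and \ref{le:Xprop}(b) --- but once you have written that identity down, taking expectations in it directly is shorter than your detour through $\mu_t$. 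So the proposal is workable, but closing its gap essentially leads you back to the paper's argument; what your version buys in exchange is a self-contained, probabilistically transparent formula $\mathbb{E}\alpha(t,y)=\int_0^t(t-s)p_s(y)\di s$ that does not rely on the martingale property of $Y$.
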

\begin{proof}
In formula (\ref{eq:TRYmain}) the expectation of the left side is given by Lemma \ref{le:Xprop}(a),
while the expectation of the stochastic integral term is 0 by Lemma \ref{le:contversion}(a), see the
Appendix.
\end{proof}

Since $\gamma(t,y) $ has finite expectation for any $(t,y)$, quite often in the literature the
renormalized self-intersection local time is defined by subtracting its expected value. Since that
would complicate some formulae below, here we do not follow that practice.

\begin{thm} \label{th:TRYmain0}
Combine the first and the last terms on the right side of (\ref{eq:TRYmain}) into a $\gamma$ term.
When $y \to 0$, the resulting terms in (\ref{eq:TRYmain}) converge almost surely and in $L^2$ to
\begin{equation}\label{eq:TRYmain0}
\int_{0}^{t} \log |W(t) - W(u)| \di u = \int_{0}^{t} \int_{0}^{v} \frac{W(v)-W(u)}{|W(v)-W(u)|^2} \di
u \cdot \di W(v) + \pi  \; \gamma(t, 0) .
\end{equation}
Moreover, $\gamma (t, y)$ is finite-valued and continuous in $(t,y)$ even when $y=0$.
\end{thm}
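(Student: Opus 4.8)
The plan is to pass to the limit $y\to0$ term by term in a reformulated version of (\ref{eq:TRYmain}). By the definition of $\gamma$ one has $t\log|y|+\pi\alpha(t,y)=\pi\gamma(t,y)$ for $y\neq 0$, so (\ref{eq:TRYmain}) reads
\[
\int_{0}^{t}\log|W(t)-W(u)-y|\di u=\int_{0}^{t}\Bigl(\int_{0}^{v}\frac{W(v)-W(u)-y}{|W(v)-W(u)-y|^{2}}\di u\Bigr)\cdot\di W(v)+\pi\gamma(t,y).
\]
The theorem then amounts to showing that, as $y\to0$, the left member and the stochastic integral each converge almost surely and in $L^{2}$ to the corresponding objects in (\ref{eq:TRYmain0}) --- whence so does $\pi\gamma(t,y)$, which then \emph{defines} $\gamma(t,0)$ --- and that the three limiting processes are finite and jointly continuous in $(t,y)$ near $\{y=0\}$; joint continuity off $\{y=0\}$ is already in Theorem \ref{th:spec_TRY}.

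For the left member $M(t,y):=\int_{0}^{t}\log|W(t)-W(u)-y|\di u$ I would argue pathwise. Fixing $t$ and a path, standard occupation-measure estimates for planar Brownian motion give that the (atomless, compactly supported) occupation measure $\nu_{t}$ of $u\mapsto W(t)-W(u)$ on $[0,t]$ satisfies $\sup_{z}\int_{|x-z|\le\varepsilon}\bigl|\log|x-z|\bigr|\,\di\nu_{t}(x)\to0$ as $\varepsilon\to0$; hence the logarithmic potential $y\mapsto M(t,y)=\int\log|x-y|\di\nu_{t}(x)$, and jointly $(t,y)\mapsto M(t,y)$, is continuous on all of $\mathbb{R}^{2}$, so in particular $M(t,y)\to M(t,0)=\int_{0}^{t}\log|W(t)-W(u)|\di u$ a.s. For $L^{2}$ convergence I would use that $\log$ of a non-degenerate planar Gaussian has all moments, with bounds depending only logarithmically on the smallest covariance eigenvalue; Minkowski's integral inequality then gives $\sup_{|y|\le1}\|M(t,y)\|_{L^{2+\varepsilon}}<\infty$ for $t\le K$, and uniform integrability together with the a.s.\ convergence upgrades it to $L^{2}$.

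The stochastic integral $I(t,y):=\int_{0}^{t}V_{y}(v)\cdot\di W(v)$, with $V_{y}(v):=\int_{0}^{v}\frac{W(v)-W(u)-y}{|W(v)-W(u)-y|^{2}}\di u$, is the heart of the matter; note $V_{y}(v)$ is $\mathcal{F}_{v}$-adapted, being a function of the increments $\{W(v)-W(u):u\le v\}$. By the It\^o isometry,
\[
\mathbb{E}\bigl|I(t,y_{1})-I(t,y_{2})\bigr|^{2}=\int_{0}^{t}\mathbb{E}\bigl|V_{y_{1}}(v)-V_{y_{2}}(v)\bigr|^{2}\di v,
\]
and expanding the square reduces the inner expectation to explicit integrals of $\tfrac{x_{1}}{|x_{1}|^{2}}\cdot\tfrac{x_{2}}{|x_{2}|^{2}}$ against the Gaussian joint law of $\bigl(W(v)-W(u_{1}),\,W(v)-W(u_{2})\bigr)$. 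The decisive point, and the one place where planarity enters, is that the potential singularities of $x/|x|^{2}$ are integrable against a two-dimensional Gaussian and that, thanks to the identity $\operatorname{div}(x/|x|^{2})=0$, the bounds carry no singularity as $y\to0$: one obtains $\mathbb{E}|V_{0}(v)|^{2}<\infty$ (an elementary computation gives the value $\tfrac{\pi^{2}}{6}v$), $\mathbb{E}|V_{y}(v)|^{2}$ bounded uniformly for $|y|\le1$ and $v\le K$, and $\mathbb{E}|V_{y}(v)-V_{0}(v)|^{2}\le C_{K}|y|^{\beta}$ for some $\beta\in(0,1)$, uniformly for $v\le K$. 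Integrating this last estimate in the displayed isometry then yields $I(t,y)\to I(t,0)=\int_{0}^{t}\!\int_{0}^{v}\frac{W(v)-W(u)}{|W(v)-W(u)|^{2}}\di u\cdot\di W(v)$ in $L^{2}$. For almost sure convergence I would combine the Burkholder--Davis--Gundy inequality with higher-moment versions of the same Gaussian estimates to get $\mathbb{E}\sup_{t\le K}|I(t,y_{1})-I(t,y_{2})|^{2p}\le C_{K,p}|y_{1}-y_{2}|^{\beta p}$ for every $p$ (the $2p$-fold integrals remain convergent in the plane), so by the Kolmogorov--Chentsov criterion $I$ has a modification jointly continuous on $[0,K]\times B_{1}(0)$; this gives $I(t,y)\to I(t,0)$ a.s.\ and continuity at $y=0$.

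Assembling the pieces, $\pi\gamma(t,y)=M(t,y)-I(t,y)$ converges a.s.\ and in $L^{2}$ as $y\to0$; being an $L^{2}$ limit, $\gamma(t,0)$ --- defined as this limit --- has finite second moment, hence is finite a.s., and passing to the limit $y\to0$ in the reformulated (\ref{eq:TRYmain}) gives exactly (\ref{eq:TRYmain0}). Joint continuity of $\gamma$ on $\mathbb{R}_{+}\times\mathbb{R}^{2}$ then follows from the continuity for $y\neq0$ in Theorem \ref{th:spec_TRY} together with the joint continuity of $M$ and $I$ near $\{y=0\}$ just established. The genuine obstacle is the uniform Gaussian estimate of the third step (cf.\ \cite{LeG1985}): it must force the logarithmic blow-up of $\alpha(t,y)$ and the $t\log|y|$ counterterm to cancel at the level of second and higher moments of the stochastic integral, so one cannot be cavalier about the simultaneous degeneracies $u_{k}\to v$ and $y\to0$; everything else is It\^o's isometry, Burkholder--Davis--Gundy, Kolmogorov's criterion and dominated convergence.
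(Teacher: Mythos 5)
Your overall architecture --- pass to the limit $y\to0$ separately in $X(t,y)=\int_0^t\log|W(t)-W(u)-y|\,du$ and in the stochastic integral $Y(t,y)$, and let $\pi\gamma(t,0)$ be \emph{defined} as the limit of the difference --- is exactly the paper's. The gap is that your two decisive estimates are asserted rather than derived, and the stronger of them is not available in the form in which you use it. You invoke a H\"older-type bound $\mathbb{E}\sup_{t\le K}|I(t,y_1)-I(t,y_2)|^{2p}\le C_{K,p}|y_1-y_2|^{\beta p}$ holding uniformly on a full ball around the origin \emph{with $y=0$ included}, and then apply Kolmogorov--Chentsov there. What the paper actually proves (Lemma \ref{le:contversion}(b), estimate (\ref{eq:KolmChenex})) is a third-moment bound $\mathbb{E}|Y(t,y)-Y(t,y')|^3\le c(K)\,(1+\log_+ 2Ka^{-2})\,\log^4\frac{1}{|y-y'|}\,|y-y'|^3$ valid only for $|y|,|y'|\ge a>0$, with a constant that degenerates logarithmically as $a\to0$; the degeneracy comes from the Green-function factor produced by the fixed starting point $W(0)=0$ sitting arbitrarily close to $y$ and $y'$. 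Your closing sentence concedes that the uniform control near the simultaneous degeneracies $u\to v$, $y\to0$ is ``the genuine obstacle''; flagging it is not the same as overcoming it, so as written the almost-sure part of the argument is incomplete precisely at its crux. The same criticism applies to the left-hand term: the claimed uniform smallness of $\int_{|x-z|\le\varepsilon}|\log|x-z||\,d\nu_t(x)$ requires an a.s.\ uniform-in-$z$ bound on the occupation measure of small discs, which is again asserted, not proved.

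It is worth recording how the paper avoids ever needing the uniform modulus. It specializes to the dyadic points $y_k=(2^{-k},0)$, for which $a=2^{-k}$ makes the degenerate factor only $O(k)$ against $|y_k-y_{k-1}|^3=O(2^{-3k})$ (this is (\ref{eq:Y2k})); a martingale maximal inequality plus Borel--Cantelli then give a.s.\ uniform convergence of $Y(t,y_k)$ on $[0,K]$. The limit is identified with $Y(t,0)$ by the It\^o isometry together with pathwise domination of the inner integrand (for fixed $v$ one has $W(v)-W(u)\ne0$ for all $u<v$ a.s.). Finally, a general $y\to0$ is sandwiched via $2^{-k-1}\le|y|\le2^{-k}$ and one more application of (\ref{eq:KolmChenex}). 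The term $X(t,y)$ is handled by the identical scheme via Lemma \ref{le:Xprop}(b). If you wish to keep your Kolmogorov--Chentsov route, you must actually establish moment bounds with constants uniform down to $y=0$, i.e.\ remove or absorb the $\log_+ 2Ka^{-2}$ factor; otherwise the dyadic subsequence, Borel--Cantelli and chaining device is the path of least resistance.
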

\begin{proof}
Let us denote the stochastic integral in (\ref{eq:TRYmain}) by $Y(t,y)$, cf. (\ref{eq:Yterm}) in the
Appendix. Define $Y_n(t):=Y(t,(n^{-1},0))$, $n \ge 1$. By Lemma \ref{le:contversion}(a), for each $n$,
$Y_n(t)$ is a continuous $L^2$-martingale. By (\ref{eq:KolmChenex}), for any $k \ge 1$,
\begin{equation}\label{eq:Y2k}
\mathbb{E}|Y_{2^{k}}(t) - Y_{2^{k-1}}(t)|^3 \le c_0(K) k^5 2^{-3k} \qquad (0 \le t \le K),
\end{equation}
where $c_0(K)$ is a constant depending only on $K$.

Thus a basic martingale inequality implies for any $k \ge 1$ that
\begin{multline*}
\mathbb{P}\left(\sup_{0 \le t \le K} |Y_{2^{k}}(t) - Y_{2^{k-1}}(t)| \ge 2^{-k/2}\right) \le 2^{3k/2}
\mathbb{E}|Y_{2^{k}}(K) - Y_{2^{k-1}}(K)|^3 \\
\le c_0(K) k^5 2^{-3k/2} \le c_1(K) 2^{-k} ,
\end{multline*}
where $c_1(K)$ is a constant depending on $K$.

An application of the Borel--Cantelli lemma then yields that $Y_{2^k}(t)$ almost surely uniformly
converges for $t \in[0,K]$ to a continuous $L^2$-martingale as $k \to \infty$. (The convergence is
also in $L^2$ by (\ref{eq:Y2k}).) Using isometry, the expression
\[
\int_{0}^{v} \frac{W(v)-W(u)-(2^{-k},0)}{|W(v)-W(u)-(2^{-k},0)|^2} \di u
\]
converges in $L^2$ as well when $k \to \infty$. Since for any fixed $v$, $W(v)-W(u)$ almost surely
does not equal to 0 when $0 \le u < v$, here the integrand can be dominated for any large enough $k
\ge k_0(\omega)$. So the integral and the limit can be interchanged and we get that the limit of
$Y_{2^k}$ is the continuous $L^2$-martingale $Y(t,0)$, which is the first term on the right side of
(\ref{eq:TRYmain0}).

Moreover, $Y(t,y)$ converges to $Y(t,0)$ as well when $y \to 0$, a.s. uniformly for $t \in [0,K]$.
For, by an argument based on (\ref{eq:KolmChenex}), similar to the above one, given any $\epsilon >
0$, for any $y$, $2^{-k-1} \le |y| \le 2^{-k}$ with $k$ large enough,
\[
|Y(t, y) - Y(t, 0)| \le |Y(t,y) - Y_{2^{k-1}}(t)| + |Y_{2^{k-1}}(t) - Y(t,0)| < \epsilon
\]
a.s. uniformly for $t \in [0,K]$.

The convergence as $y \to 0$ of the term $X(t,y)$ on the left side of (\ref{eq:TRYmain}) can be
treated analogously by Lemma \ref{le:Xprop}(b) of the Appendix.
\end{proof}

The following \emph{occupation time formulae}, cf. \cite{LeG1985}, follow from the previous results.
\begin{cor} \label{co:occupation}
Suppose that $f: \mathbb{R}^2 \to \mathbb{R}$ is a bounded, Borel measurable function. Then
\begin{multline}\label{eq:occup1}
\int_{0}^{t} \int_{0}^{v} f(W(v) - W(u)) \di u \di v = \int_{\mathbb{R}^2} f(x) \alpha (t,x) \di x \\
= \int_{\mathbb{R}^2} f(x) \left\{\gamma (t,x) - \frac{t}{\pi} \log|x|\right\} \di x.
\end{multline}
Alternatively,
\begin{multline}\label{eq:occup2}
\int_{0}^{t} \int_{0}^{v} \left\{f(W(v) - W(u)) - \mathbb{E}f(W(v) - W(u)) \right\}\di u \di v \\
= \int_{\mathbb{R}^2} f(x) \left\{\gamma (t,x) - \mathbb{E}\gamma (t,x)\right\} \di x .
\end{multline}

\end{cor}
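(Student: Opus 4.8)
The plan is to read \eqref{eq:occup1} as the elementary change-of-variables identity for the occupation measure $\mu_t$ and then substitute the Radon--Nikodym derivative supplied by Theorem~\ref{th:spec_TRY}. First I would note that the map $\Phi(u,v):=W(v)-W(u)$ pushes the restriction of planar Lebesgue measure $\lambda$ to the triangle $V_t=\{0\le u\le v\le t\}$ forward onto $\mu_t$, so that for any bounded Borel $f$ the abstract change-of-variables formula gives
\[
\int_{0}^{t}\int_{0}^{v} f(W(v)-W(u))\di u\di v=\int_{V_t}(f\circ\Phi)\di\lambda=\int_{\mathbb{R}^2}f(x)\di\mu_t(x).
\]
This step needs nothing beyond Borel measurability and boundedness of $f$. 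Next, for each fixed pair $0\le u<v$ one has $\mathbb{P}(W(v)=W(u))=0$, so Fubini gives $\mu_t(\{0\})=0$ a.s.; together with Theorem~\ref{th:spec_TRY} (the symmetric derivative $\alpha(t,\cdot)$ exists a.s., is finite and continuous off the origin, and coincides there with the Radon--Nikodym density of $\mu_t$, whose singular part can only sit at $0$) this yields, almost surely, $\di\mu_t=\alpha(t,x)\di x$ as measures on $\mathbb{R}^2$. Substituting gives the first equality of \eqref{eq:occup1}; observe that for a.e.\ $\omega$ the path is bounded on $[0,t]$, hence $\alpha(t,\cdot)$ has compact support with $\int_{\mathbb{R}^2}\alpha(t,x)\di x=\mu_t(\mathbb{R}^2)=\lambda(V_t)=t^2/2<\infty$, so $f\cdot\alpha(t,\cdot)$ is integrable and every integral in sight is finite. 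The second equality of \eqref{eq:occup1} is then just the substitution $\alpha(t,x)=\gamma(t,x)-\frac{t}{\pi}\log|x|$ from the definition of renormalized self-intersection local time, the right-hand side being understood as the single integral of the integrable function $f\cdot\alpha(t,\cdot)$.

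For \eqref{eq:occup2} I would next evaluate the expectation of the integrand. Writing $p_s$ for the planar heat kernel, $\mathbb{E}f(W(v)-W(u))=\int_{\mathbb{R}^2}f(z)\,p_{v-u}(z)\di z$, and Fubini (licit because $|f|\le M$ and $\int_{\mathbb{R}^2}\int_{0}^{t}\int_{0}^{v}p_{v-u}(z)\di u\di v\di z=t^2/2<\infty$) gives
\[
\int_{0}^{t}\int_{0}^{v}\mathbb{E}f(W(v)-W(u))\di u\di v=\int_{\mathbb{R}^2}f(z)\left\{\int_{0}^{t}\int_{0}^{v}p_{v-u}(z)\di u\di v\right\}\di z=\int_{\mathbb{R}^2}f(z)\,\mathbb{E}\alpha(t,z)\di z,
\]
since the bracketed function is the density of $A\mapsto\mathbb{E}\mu_t(A)=\int_{0}^{t}\int_{0}^{v}\mathbb{P}(W(v)-W(u)\in A)\di u\di v$, i.e.\ it equals $\mathbb{E}\alpha(t,z)$; by Corollary~\ref{co:egamma} this is $\mathbb{E}\gamma(t,z)-\frac{t}{\pi}\log|z|$, continuous in $z\ne0$ and locally $\lambda$-integrable. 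Subtracting this deterministic identity from the first identity of \eqref{eq:occup1} gives, almost surely,
\[
\int_{0}^{t}\int_{0}^{v}\bigl\{f(W(v)-W(u))-\mathbb{E}f(W(v)-W(u))\bigr\}\di u\di v=\int_{\mathbb{R}^2}f(x)\bigl\{\alpha(t,x)-\mathbb{E}\alpha(t,x)\bigr\}\di x,
\]
and since $\alpha(t,x)-\mathbb{E}\alpha(t,x)=\gamma(t,x)-\mathbb{E}\gamma(t,x)$ (the singular $\frac{t}{\pi}\log|x|$ terms cancel), this is exactly \eqref{eq:occup2}.

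There is no genuine obstacle here; the only points needing a little care are bookkeeping. One should check that $\di\mu_t=\alpha(t,x)\di x$ holds, almost surely, \emph{simultaneously for all} $t\ge0$ (establish it for rational $t$ and extend using the monotonicity of $t\mapsto\mu_t(A)$ together with the continuity of $(t,x)\mapsto\alpha(t,x)$ off the origin and $\mu_t(\{0\})\equiv0$, then a monotone-class argument in $A$), and that the apparently divergent $\log$ contributions always occur in the well-defined paired combinations. Everything else is routine use of Theorem~\ref{th:spec_TRY}, Corollary~\ref{co:egamma}, and Fubini's theorem.
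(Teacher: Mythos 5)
Your argument is correct in outline but follows a genuinely different route from the paper's. You obtain the first identity of (\ref{eq:occup1}) abstractly: push Lebesgue measure on $V_t$ forward to $\mu_t$, note $\mu_t(\{0\})=0$ a.s., and invoke the differentiation theorem for measures (a finite symmetric derivative off the origin forces $\mu_t\ll\lambda$ there, with density $\alpha(t,\cdot)$). The paper states this consequence of Rudin's theorem in the discussion preceding Theorem \ref{th:spec_TRY}, but its actual proof of the corollary does not use it: it reduces to $f=\mathbf{1}_{B_r(a)}$, integrates the mollified identity (\ref{eq:spec_TRY}) over $B_r(a)$ in the variable $y$, and lets $\delta\to 0^+$, using (\ref{eq:discs}), the continuity of $\gamma$ and of the version $Y(t,z)$ of the stochastic integral, and the local integrability of $\log|y|$; then it extends from disc indicators to general bounded Borel $f$. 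The detour is not superfluous, and this is the one point in your proposal that needs more care: Theorem \ref{th:spec_TRY} produces the symmetric derivative for each \emph{fixed} $y\ne 0$ almost surely, hence (by Fubini) for a.e.\ $\omega$ only at $\lambda$-almost every $y$ simultaneously, while Rudin's theorem requires finiteness of the upper symmetric derivative at \emph{every} $y\ne 0$ in order to exclude a singular part of $\mu_t$ carried by a Lebesgue-null set away from the origin; continuity of the limiting version $\alpha$ does not by itself upgrade ``for each fixed $y$, a.s.'' to ``a.s., for all $y$.'' You flag a related simultaneity issue, but only in the $t$-variable. The paper's disc-integration argument needs only $\lambda$-a.e.\ convergence in $y$ plus a dominating function, which is exactly what is available, so it closes this gap at the cost of being less transparent. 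Your treatment of (\ref{eq:occup2}) via the heat kernel is equivalent to the paper's one-line ``take expectations in (\ref{eq:occup1}) and subtract,'' and the pairing of the logarithmic singularities is handled correctly.
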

\begin{proof}
It is enough to show (\ref{eq:occup1}) for indicator functions of discs; by standard methods, linear
combinations of indicators extend to a general $f$. So let us take a closed disc $B_r(a)$ and show
that
\begin{multline}\label{eq:occup}
\int_{0}^{t} \int_{0}^{v} \mathbf{1}_{B_r(a)}(W(v) - W(u)) \di u \di v = \int_{B_r(a)} \alpha (t,y)
\di y \\
= \int_{B_r(a)} \gamma (t,y) \di y - \frac{t}{\pi} \int_{B_r(a)} \log|y| \di y .
\end{multline}

From the results above we know that the stochastic integral term on the right side of
(\ref{eq:TRYmain}) has a version $Y(t,z)$ which is continuous for any $(t,y) \in
[0,K]\times\mathbb{R}^2$. (\ref{eq:secondterm}) gives
\[
\int_{0}^{t} \int_{0}^{v} (\nabla \phi^{\delta })\left(W(v)-W(u)-y\right) \di u \cdot \di W(v) =
\frac{1}{\pi \delta ^2} \int_{B_{\delta }(y)} Y(t,z) \di z .
\]
Substitute this into (\ref{eq:spec_TRY}):
\begin{multline*}
\int_{0}^{t} \phi^{\delta }\left(W(t) - W(u) - y \right) \di u - t \phi^{\delta }(y)
- \frac{1}{\pi \delta ^2} \int_{B_{\delta }(y)} Y(t,z) \di z \\
= \frac{1}{\delta ^2} \; \int_{0}^{t} \int_{0}^{v} \mathbf{1}_{B_{\delta }(y)}(W(v)-W(u)) \di u \di v
.
\end{multline*}
Then integrate this equality over a closed disc $B_r(a)$ with respect to $y$:
\begin{multline} \label{eq:occupdel}
\int_{B_r(a)} \left\{ \int_{0}^{t} \phi^{\delta }\left(W(t) - W(u) - y \right) \di u - t \phi^{\delta
}(y)
- \frac{1}{\pi \delta ^2} \int_{B_{\delta }(y)} Y(t,z) \di z \right\} \di y \\
= \frac{1}{\delta ^2} \; \int_{0}^{t} \int_{0}^{v} \int_{B_r(a)} \mathbf{1}_{B_{\delta
}(y)}(W(v)-W(u)) \di y \di u \di v .
\end{multline}

Now, it is clear that
\begin{equation}\label{eq:discs}
\lim_{\delta \to 0^+} \frac{1}{\pi \delta ^2} \int_{B_r(a)} \mathbf{1}_{B_{\delta }(y)}(z) \di y =
\mathbf{1}_{B_{r }^{\circ}(a)}(z) + \frac12 \mathbf{1}_{\partial B_{r }(a)}(z) ,
\end{equation}
where $B_{r }^{\circ}(a) = \{x: |x-y| < r \}$. Take limit in (\ref{eq:occupdel}) as $\delta \to 0^+$.
By Theorem \ref{th:spec_TRY}, the limit of the expression in the braces is $\pi \alpha(t,y)$ when $y
\ne 0$. Since $\alpha(t,y) = \gamma(t,y) - \frac{t}{\pi} \log|y|$, and $\gamma$ is a.s. continuous
everywhere while $\int_{B_r(a)} \log|y| \di y$ is finite even if $0 \in B_r(a)$, this gives us the
right hand sides of (\ref{eq:occup}).

The left hand side of (\ref{eq:occup}) is obtained as the limit of the right hand side of
(\ref{eq:occupdel}), using (\ref{eq:discs}), since it has zero probability that $W(v)-W(u) \in
\partial B_{r }(a)$. This completes the proof of (\ref{eq:occup1}).

Now take expectation in (\ref{eq:occup1}):
\[
\int_{0}^{t} \int_{0}^{v} \mathbb{E}f(W(v) - W(u)) \di u \di v = \int_{\mathbb{R}^2} f(x)
\left\{\mathbb{E}\gamma (t,x) - \frac{t}{\pi} \log|x|\right\} \di x.
\]
Subtract this from (\ref{eq:occup1}), and the result is (\ref{eq:occup2}).

\end{proof}

Finally, we add some remarks.
We can combine the occupation time formula (\ref{eq:occup1}) and formula (\ref{eq:TRY}), with
$y=0$, when $\phi$ is a $C^3$ scalar field:
\begin{multline} \label{eq:TRYoccup}
\int_{0}^{t} \phi\left(W(t) - W(u) \right) \di u
- \int_{0}^{t} \int_{0}^{v} (\nabla \phi)\left(W(v) - W(u)\right) \di u \cdot \di W(v) \\
= \frac12 \int_{\mathbb{R}^2} (\Delta \phi)(x) \alpha(t,x) \di x + t \phi(0) \\
= \frac12 \int_{\mathbb{R}^2} (\Delta \phi)(x) \left\{\gamma(t,x) - \frac{t}{\pi} \log|x|\right\} \di
x + t \phi(0).
\end{multline}
Green's theorem implies that $\phi(0) = \frac{1}{2 \pi} \int_{\mathbb{R}^2} (\Delta \phi)(x) \log|x|
\di x$ holds when $\phi(x) - x \cdot (\nabla \phi)(x) \, \log|x|$ tends to 0 as $|x| \to \infty$. In
this case (\ref{eq:TRYoccup}) simplifies to the following Tanaka--Rosen--Yor formula:
\begin{multline*}
\int_{0}^{t} \phi\left(W(t) - W(u) \right) \di u
= \int_{0}^{t} \int_{0}^{v} (\nabla \phi)\left(W(v) - W(u)\right) \di u \cdot \di W(v) \\
+ \frac12 \int_{\mathbb{R}^2} (\Delta \phi)(x) \gamma(t,x) \di x .
\end{multline*}

Further, comparing the first equality of (\ref{eq:TRYoccup}) with (\ref{eq:TRYlast}) results the
almost sure limit
\begin{equation}\label{eq:cont_disc}
\lim_{m \to \infty} \sum_{x \in 2^{-m}\mathbb{Z}^2} \alpha _m(t, x) \: (\Delta \phi)(x) \: 2^{-2m}
= \int_{\mathbb{R}^2}  \alpha(t,x) \:(\Delta \phi)(x) \di x .
\end{equation}
Now, almost surely, the support of the continuous $\alpha$, and then by (\ref{eq:strong_d}),
the support of each $\alpha_m$
for $m \ge m_0(\omega)$, can be covered by a finite disc $B_R(0)$ with a large enough radius
$R=R(\omega)$. Thus Lemma \ref{le:conv} implies that the sum here can be replaced by an
integral. Also, the Poisson equation $\Delta \phi = f$ can be solved in the plane for
any continuous $f$, $f(x) = O(|x|^{-2-\delta})$, $\delta > 0$, by the formula
$\phi(y) = \frac{1}{2 \pi} \int_{\mathbb{R}^2} (\Delta \phi)(x) \log|x-y| \di x$. Thus, for
any such $f$, (\ref{eq:cont_disc}) can be written as
\begin{equation}\label{eq:alpha_conv}
\lim_{m \to \infty} \int_{\mathbb{R}^2} \alpha _m(t, x) \: f(x) \di x
= \int_{\mathbb{R}^2}  \alpha(t,x) \: f(x) \di x   \qquad \text{a.s.}
\end{equation}
This weak convergence formula supplements the basic definition (\ref{eq:SILTlim}) of self-intersection
local time.

%%%%%%%%%%%%%%%%%%%%%%%%%%%%%%%%%%%%%%%%%%%%%%%%%%%%%%%%%%%%%%%%%%%

\section{Appendix: Some technical lemmas}
\label{sec:tech_lem}

This lemma says that the sum in (\ref{eq:speclast}) can be approximated by an integral.
\begin{lem} \label{le:conv}
Almost surely, for any $t \in \mathbb{R}_+$, $y \in \mathbb{R}^2$ and $\delta > 0$ fixed,
\begin{multline*}
\sum_{ x \in 2^{-m}\mathbb{Z}^2 \cap B_{\delta} (y)} \alpha _m(t, x) \: 2^{-2m} = \int_{B_{\delta}
(y)} \alpha _{m}(t, x)  \di x + \delta \: O(m^2 2^{-m}) .
\end{multline*}
\end{lem}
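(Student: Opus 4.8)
The plan is to compare the Riemann‐type sum over lattice points with the integral of the piecewise‐linear interpolation $\alpha_m(t,x)$, using the explicit interpolation formulae \eqref{eq:int_pol} and \eqref{eq:int_pol_int}. Fix $\omega$ in the almost sure event where \eqref{eq:ordSILT} holds and where each $\alpha_m$ is supported in a fixed disc $B_R(0)$. First I would triangulate the grid cells of $2^{-m}\mathbb{Z}^2$ inside a slightly enlarged disc $B_{\delta+2^{-m}}(y)$ into the lower and upper triangles $\Delta$ used in Section \ref{sec:DSILT}. Over each such triangle $\Delta$ with vertices carrying values $A,B,C$, formula \eqref{eq:int_pol_int} gives $\int_\Delta \alpha_m(t,x)\,dx = \tfrac{1}{6}2^{-2m}(A+B+C)$, whereas the corresponding contribution to the lattice sum assigns weight $2^{-2m}$ to each vertex. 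Summing over all triangles, every interior grid point of the region is shared by six triangles, so its total weight from the integral side is exactly $6\cdot\tfrac16 2^{-2m}=2^{-2m}$, matching the sum side. Hence the two sides agree up to the boundary discrepancy: grid points near $\partial B_\delta(y)$ that are counted with the full weight $2^{-2m}$ in the sum but with a reduced weight in the integral (or vice versa), plus the part of the integral over triangles that stick out of, or fail to cover, $B_\delta(y)$.

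Next I would bound this boundary error. The symmetric difference between $B_\delta(y)$ and the union of grid triangles meeting it is contained in an annulus of width $O(2^{-m})$ around the circle of radius $\delta$, whose area is $O(\delta\,2^{-m})$; likewise the number of grid points within distance $2^{-m}$ of that circle is $O(\delta\,2^{2m}\cdot 2^{-m})=O(\delta\,2^{m})$, each carrying weight $2^{-2m}$, for a total weight $O(\delta\,2^{-m})$. On this exceptional set both $\alpha_m(t,x)$ and its interpolation are bounded by $\sup_x \alpha_m(t,x) = O(m^2)$ by \eqref{eq:ordSILT}, uniformly for $t\in[0,K]$. Multiplying the exceptional ``mass'' $O(\delta\,2^{-m})$ by the height bound $O(m^2)$ yields an error $\delta\cdot O(m^2 2^{-m})$, which is exactly the claimed remainder. (If one prefers to keep $\delta$ fixed rather than let it enter the constant, the same estimate with $\delta$ absorbed gives $O(m^2 2^{-m})$; I would state it in the $\delta$-explicit form to match the lemma.)

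The main obstacle is purely bookkeeping: correctly accounting for the multiplicities with which each grid point is shared among adjacent triangles, and being careful about grid points that lie on $\partial B_\delta(y)$ or whose surrounding triangles are only partially inside $B_\delta(y)$. The clean way to handle this is to write both sides as sums over all triangles $\Delta$ of the grid, using $\ind{B_\delta(y)}$ to select the relevant ones, and to note that the per-triangle identity \eqref{eq:int_pol_int} is exact; the only inequality entering is the crude bound $|\alpha_m(t,x)|\le \sup_x\alpha_m(t,x)$ on triangles that straddle the boundary circle, together with the elementary area/counting estimate for the width-$O(2^{-m})$ annulus. No probabilistic input is needed beyond \eqref{eq:ordSILT} and the a.s. compact support of $\alpha_m$, both already established, so the argument is deterministic once $\omega$ is fixed in the relevant full-measure set.
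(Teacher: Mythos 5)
Your proposal is correct and follows essentially the same route as the paper: both compare the lattice sum with the integral of the piecewise-linear interpolant via (\ref{eq:int_pol_int}), observe that the weights at interior grid vertices cancel exactly (your count of six incident triangles each contributing $\tfrac16 2^{-2m}$ is the triangle-level version of the paper's rectangle-level bookkeeping), and bound the remaining boundary contribution by the $O(\delta\,2^{m})$ count of vertices near the circle times the $O(m^2)\,2^{-2m}$ per-vertex weight from (\ref{eq:ordSILT}).
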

\begin{proof}
First, let us estimate the error between the sum of a discrete function and the integral of an
interpolated function over a rectangular domain $A_n = [x_0, x_n) \times [y_0, y_n)$ with vertices on
a grid $h \mathbb{Z}^2$, in general. So let $f : h \mathbb{Z}^2 \to \mathbb{R}$ be a discrete function
and $f(x,y)$, $(x,y) \in \mathbb{R}^2$, be obtained from $f$ by linear interpolation on triangles of
the grid, as it was described for $\alpha _h$ in (\ref{eq:int_pol}). Put
\[
S_n := h^2 \sum_{(x_i, y_j) \in A_n} f(x_i,y_j) = h^2 \sum_{i=1}^{n} \sum_{j=1}^{n} f(x_{i-1},y_{j-1})
,
\]
and
\begin{multline*}
T_n := \int_{A_n} f(x,y) \di x \di y \\
= \frac{h^2}{6} \sum_{i=1}^{n} \sum_{j=1}^{n} \left\{f(x_{i-1},y_{j-1}) + 2 f(x_i, y_{j-1}) + 2
f(x_{i-1}, y_j) + f(x_i,y_j)\right\} ,
\end{multline*}
cf. (\ref{eq:int_pol_int}). Then in the error $T_n - S_n$, all contributions of inner vertices cancel
and only the contribution of vertices at the boundary of $A_n$ remain:
\begin{multline*}
T_n - S_n = \frac{h^2}{2} \sum_{i=1}^{n-1} \left\{f(x_i,y_n) +  f(x_n, y_i) -  f(x_0, y_i)
- f(x_i,y_0)\right\} \\
 + \frac{h^2}{6} \left\{ 2 f(x_0,y_n) +  2f(x_n, y_0) + f(x_n, y_n) - 5f(x_0,y_0) \right\}.
\end{multline*}

Returning to the statement of the lemma, by our definition in Section \ref{sec:DSILT}, $\alpha _{m}(t,
x)$ is obtained by linear interpolation on triangles of the grid with mesh $h=2^{-m}$. If one
considers the difference of the sum and the integral over a disc $B_{\delta }(y)$, the cancelation of
inner vertices still holds, and only the contributions of vertices adjacent to the circumference
remain. The number of these latter vertices is of the order of $\delta \; O(2^{m})$. Thus by
(\ref{eq:ordSILT}) we have that
\begin{multline*}
\left| \sum_{x \in 2^{-m}\mathbb{Z}^2 \cap B_{\delta} (y)} \alpha _m(t, x) \: 2^{-2m}
- \int_{B_{\delta} (y)} \alpha _{m}(t, x)  \di x \right| \\
\le \delta \; O(2^{m}) \; O(m^2) \; 2^{-2m} = \delta \; O(m^2 2^{-m}) .
\end{multline*}
This completes the proof of the lemma.
\end{proof}

We need the following representation of $\nabla \phi^{\delta }$.
\begin{lem} \label{le:gauss}
\begin{multline}\label{eq:nabla_phi}
(\nabla \phi^{\delta })(x) = \frac{1}{\pi \delta ^2} \int_{B_{\delta }(0)} \frac{x - z}{|x - z|^2}
\: \di z \\
= -\frac{1}{\pi \delta ^2} \int_{C_{\delta }(0)} \log|z - x| \; n_0(z) \: \di s(z)
 \qquad (x \in \mathbb{R}^2) ,
\end{multline}
where $B_{\delta }(0)$ is the closed disc centered at the origin with radius $\delta > 0$, $C_{\delta
}(0)$ is its counterclockwise directed boundary, $n_0$ is the outward unit normal along the boundary,
and $\di s$ denotes integration with respect to arc length.
\end{lem}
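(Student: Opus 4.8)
The plan is to recognize the middle expression as the gradient of the logarithmic potential of the normalized uniform measure on the disc, and to identify that potential with $\phi^{\delta}$ itself. Set
\[
u(x) := \frac{1}{\pi \delta^2} \int_{B_{\delta}(0)} \log|x - z| \di z ,
\]
so that, at least formally, $\nabla u(x) = \frac{1}{\pi \delta^2} \int_{B_{\delta}(0)} \frac{x - z}{|x - z|^2} \di z$, since $\nabla_x \log|x - z| = (x-z)/|x-z|^2$. The proof then proceeds in three steps: (i) show $u = \phi^{\delta}$ on $\mathbb{R}^2$, which yields the first asserted equality once differentiation under the integral sign is justified; (ii) justify that differentiation; (iii) convert the area integral into the boundary integral via the divergence theorem.

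For step (i), I would pass to polar coordinates $z = \rho e^{i\theta}$ about the origin and use the classical mean-value identity $\frac{1}{2\pi}\int_0^{2\pi} \log|a - \rho e^{i\theta}| \di\theta = \log(|a| \vee \rho)$ for $a \in \mathbb{R}^2$, $\rho > 0$ (it follows from the mean value property of the harmonic function $z \mapsto \log|a-z|$ on $|z| \le \rho$ when $\rho < |a|$, and from the factorization $\log|a-\rho e^{i\theta}| = \log\rho + \log|a/\rho - e^{i\theta}|$ together with $\frac{1}{2\pi}\int_0^{2\pi}\log|b - e^{i\theta}|\di\theta = 0$ for $|b|<1$ when $\rho > |a|$). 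Hence $u(x) = \frac{2}{\delta^2}\int_0^{\delta} \log(|x| \vee \rho)\, \rho \di\rho$. For $|x| \ge \delta$ this is $\frac{2}{\delta^2}\log|x|\int_0^{\delta}\rho\di\rho = \log|x|$; for $|x| < \delta$ one splits the radial integral at $\rho = |x|$ and evaluates the elementary primitive $\int \rho\log\rho\,\di\rho = \frac{\rho^2}{2}\log\rho - \frac{\rho^2}{4}$, obtaining $u(x) = \frac{|x|^2 - \delta^2}{2\delta^2} + \log\delta$. In both cases $u(x) = \phi^{\delta}(x)$. (As a shortcut one may instead note that $u$ and $\phi^{\delta}$ are $C^1$, radially symmetric, and share the distributional Laplacian $\frac{2}{\delta^2}\mathbf{1}_{B_{\delta}(0)}$ because $\Delta\log|\cdot| = 2\pi\delta_0$; so $u-\phi^{\delta}$ is harmonic on $\mathbb{R}^2$, and since it vanishes on $\{|x|>\delta\}$ by Newton's theorem for the exterior, analyticity of harmonic functions forces $u \equiv \phi^{\delta}$.)

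For step (ii), the integrand $x \mapsto (x-z)/|x-z|^2$ has only an $|x-z|^{-1}$ singularity, which is locally integrable in the plane; a standard dominated-convergence argument — regularizing by $\log\bigl((|x-z|^2 + \varepsilon^2)^{1/2}\bigr)$ and letting $\varepsilon \to 0^+$ — shows $u \in C^1$ with $\nabla u(x) = \frac{1}{\pi\delta^2}\int_{B_{\delta}(0)}\frac{x-z}{|x-z|^2}\di z$, which together with step (i) gives the first equality of the lemma.

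For step (iii), write $\frac{x-z}{|x-z|^2} = -\nabla_z \log|x-z|$ and apply the divergence theorem componentwise (the gradient theorem) on $B_{\delta}(0)$, giving $\int_{B_{\delta}(0)}\frac{x-z}{|x-z|^2}\di z = -\int_{C_{\delta}(0)}\log|z-x|\,n_0(z)\di s(z)$; dividing by $\pi\delta^2$ yields the second equality. When $|x| > \delta$ the field $z\mapsto\nabla_z\log|x-z|$ is smooth on a neighborhood of $\overline{B_{\delta}(0)}$ and the theorem applies directly; when $|x| < \delta$ I would excise a small disc $B_{\varepsilon}(x)$, apply the theorem on $B_{\delta}(0)\setminus B_{\varepsilon}(x)$, and let $\varepsilon\to0^+$ — the interior integral converges by the integrability noted above, while the boundary contribution over $\partial B_{\varepsilon}(x)$ is $O(\varepsilon|\log\varepsilon|)\to 0$. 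The only points requiring care are these singularities of the integrands at $z=x$ for $x$ inside the disc; otherwise the argument is routine, being in essence Newton's theorem for the logarithmic potential combined with Green's theorem, and I anticipate no genuine obstacle.
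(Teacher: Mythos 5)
Your proof is correct, but it reaches the two equalities by a different route from the paper. The paper proves the equality of the first and \emph{third} expressions directly: by rotational symmetry it reduces to $x=(-a,0)$ and evaluates the boundary integral explicitly through the function $\Psi(u)=\frac{1}{2\pi}\int_0^{2\pi}\log(u^2+1+2u\cos\theta)\cos\theta\di\theta$, which equals $u$ for $u\le1$ and $1/u$ for $u\ge1$, matching the known formula for $\nabla\phi^{\delta}$; the link between the second and third expressions is then dispatched as ``a standard theorem of vector analysis.'' You instead prove the equality of the first and \emph{second} expressions, by identifying $\phi^{\delta}$ as the logarithmic potential $u(x)=\frac{1}{\pi\delta^2}\int_{B_{\delta}(0)}\log|x-z|\di z$ of the normalized uniform measure on the disc (via the mean-value identity $\frac{1}{2\pi}\int_0^{2\pi}\log|a-\rho e^{i\theta}|\di\theta=\log(|a|\vee\rho)$, the potential-level analogue of the paper's $\Psi$ computation) and then differentiating under the integral sign; the second and third expressions are linked by the gradient form of the divergence theorem, with the excision argument handling the interior singularity. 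The paper's route avoids the need to justify differentiation of a singular integral, at the cost of a slightly less transparent trigonometric evaluation; yours is conceptually cleaner (it exhibits $\phi^{\delta}$ as a Newtonian potential, which also explains \emph{why} the lemma is true and why $\Delta\phi^{\delta}=\frac{2}{\delta^2}\mathbf{1}_{B_{\delta}(0)}$), but requires the extra regularization step, which you do address adequately. Your computations check out, so the argument stands as a valid alternative proof.
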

\begin{proof}
The second equality follows from a standard theorem of vector analysis; note that the discontinuity of
the integrand in the second term when $|x| \le \delta $ is not essential. To prove that the first term
equals the third, because of rotational symmetry, it is enough to consider points $x = (-a,0)$, $a \ge
0$. Then the third term of (\ref{eq:nabla_phi}) becomes
\[
-\frac{1}{2 \pi \delta } \int_{0}^{2 \pi} \log \left((a + \delta \cos \theta )^2 + (\delta \sin
\theta)^2 \right) \left(\cos \theta , \sin \theta \right) \di \theta = \left(-\frac{1}{\delta } \Psi
\left( \frac{a}{\delta }\right), 0 \right),
\]
where
\[
\Psi (u) := \frac{1}{2 \pi} \int_{0}^{2 \pi} \log \left(u^2 + 1 + 2 u \cos \theta \right) \cos \theta
\di \theta = \left\{\begin{array}{ll}
           u & \text{for } 0 \le u \le 1, \\
           \frac{1}{u} & \text{for } u \ge 1 .
         \end{array} \right.
\]
These prove the equality with $\nabla \phi^{\delta }$ given by (\ref{eq:nablaphi}).
\end{proof}

The next lemma establishes some important properties of the stochastic integral appearing in Theorem
\ref{th:spec_TRY}.
\begin{lem} \label{le:contversion}
Fix an arbitrary $K>0$. Consider the stochastic integral
\begin{equation}\label{eq:Yterm}
Y(t,y) := \int_{0}^{t} \int_{0}^{v} \frac{W(v)-W(u)-y}{|W(v)-W(u)-y|^2} \di u \cdot \di W(v) .
\end{equation}
Then the following properties hold.
\begin{enumerate}[(a)]
\item $Y(t,y)$ is a continuous $L^2$-martingale with expectation 0 as a function of $t \in [0, K]$ for
any fixed $y \in \mathbb{R}^2$.

\item
\begin{equation}\label{eq:KolmChen}
\mathbb{E}|Y(t,y) - Y(t,y')|^3 \le C |y - y'|^{2+\beta}
\end{equation}
with a finite $C=C(K,a)$ and with an arbitrary $\beta \in (0,1)$ for any $t \in [0, K]$ and $|y|, |y'|
\ge a$, where $a>0$ is arbitrary, fixed. More exactly,
\begin{equation}\label{eq:KolmChenex}
\mathbb{E}|Y(t,y) - Y(t,y')|^3 \le c(K) \left(1 + \log_+ 2K a^{-2} \right) \log^4 \frac{1}{|y-y'|}  |y
- y'|^{3},
\end{equation}
where $c(K)$ is a finite constant depending only on $K$ and $\log_+ x := 0 \vee \log x$.

\item $Y(t,y)$ has a version which is a.s. a continuous function of $y \ne 0$. In fact, it has a
version which is a.s. a continuous function of $(t,y)$ when $y \ne 0$.
\end{enumerate}

\end{lem}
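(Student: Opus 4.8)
The plan is to treat the three parts in turn, using the independent-increments structure of planar Brownian motion together with standard tools for continuous martingales (It\^o isometry, Burkholder--Davis--Gundy, Doob's maximal inequality) and elementary Gaussian integral estimates. For \textbf{(a)}, write $g_y(v):=\int_0^v \frac{W(v)-W(u)-y}{|W(v)-W(u)-y|^2}\di u$ for the integrand, so $|g_y(v)|\le\int_0^v|W(v)-W(u)-y|^{-1}\di u$. Squaring, taking expectation and applying Fubini,
\[
\mathbb{E}|g_y(v)|^2\le 2\int_0^v\!\!\int_0^{u'}\mathbb{E}\Big[\frac{1}{|W(v)-W(u)-y|\;|W(v)-W(u')-y|}\Big]\di u\,\di u'.
\]
For $u<u'<v$ the increments $W(v)-W(u')$ and $W(u')-W(u)$ are independent centred Gaussians with covariances $(v-u')I_2$ and $(u'-u)I_2$; conditioning on the first and using twice the bound $\mathbb{E}[\,|c+N(0,\sigma^2 I_2)|^{-1}\,]\le C\sigma^{-1}$ (valid for every fixed $c$, since the density is at most $(2\pi\sigma^2)^{-1}$), the inner expectation is at most $C\big((u'-u)(v-u')\big)^{-1/2}$. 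An elementary beta-integral then gives $\mathbb{E}\int_0^K|g_y(v)|^2\di v<\infty$, so $g_y$ is a progressively measurable, square-integrable process on $[0,K]\times\Omega$, and $Y(\cdot,y)$ is a continuous $L^2$-martingale with mean $0$. An entirely analogous computation with $q$ nested increments moreover shows $\sup_{v\le K}\mathbb{E}|g_y(v)|^q\le C_{q,K}<\infty$ for every $q<\infty$; in particular $Y(\cdot,y)$ is a continuous $L^3$-martingale and, by Burkholder--Davis--Gundy and H\"older, $\mathbb{E}|Y(t,y)-Y(t',y)|^3\le C|t-t'|^{3/2}$ on $[0,K]$, a fact used in (c).

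For \textbf{(b)}, by Burkholder--Davis--Gundy $\mathbb{E}|Y(t,y)-Y(t,y')|^3\le C\,\mathbb{E}\big[\langle Y(\cdot,y)-Y(\cdot,y')\rangle_t^{3/2}\big]$, and $Y(\cdot,y)-Y(\cdot,y')$ has integrand $g_y(v)-g_{y'}(v)=\int_0^v\Phi(W(v)-W(u))\di u$ with $\Phi(z)=\nabla_z\log\frac{|z-y|}{|z-y'|}$. Writing $r:=|y-y'|$, the standard difference-of-Coulomb-kernels estimate gives $|\Phi(z)|\le C\big(\frac{\mathbf{1}_{\{|z-y|\le 4r\}}}{|z-y|}+\frac{\mathbf{1}_{\{|z-y'|\le 4r\}}}{|z-y'|}+\frac{r\,\mathbf{1}_{\{|z-y|>4r\}}}{|z-y|^2}\big)$. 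Bounding $\langle\cdot\rangle_t^{3/2}\le K^{1/2}\int_0^t|g_y(v)-g_{y'}(v)|^3\di v$, expanding the cube as a triple time integral, ordering the three times, and integrating out one Brownian increment at a time (again using independence of increments), each such integration contributes, for the three model kernels above, a factor of order $r\log(1/r)\,\max(\sigma,r)^{-2}$ with $\sigma$ the standard deviation of the increment just removed; carrying out the remaining time integrals and keeping careful track of the logarithmic factors produces the bound $c(K)\big(1+\log_+(2Ka^{-2})\big)\log^4(1/r)\,r^3$ of (\ref{eq:KolmChenex}), the dependence on $a$ entering through the last integration step, where $\Phi$ is concentrated near $y$, $|y|\ge a$, while the increment removed has possibly small variance. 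Since $\log^4(1/r)\,r^3=O(r^{2+\beta})$ for every $\beta\in(0,1)$, (\ref{eq:KolmChen}) follows as well. I expect this bookkeeping --- obtaining exactly the fourth power of the logarithm and the stated $a$-dependence --- to be the main technical obstacle.

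For \textbf{(c)}, the first assertion is immediate from (\ref{eq:KolmChen}): for fixed $t$, the Kolmogorov--Chentsov criterion for the two-dimensional parameter $y$ (exponent $2+\beta>2$) gives a modification of $y\mapsto Y(t,y)$ that is a.s.\ locally H\"older continuous on $\{|y|\ge a\}$, and letting $a\downarrow 0$ along a sequence yields continuity on $\mathbb{R}^2\setminus\{0\}$. Joint continuity cannot be obtained by a naive Kolmogorov argument in $(t,y)$, because the $t$-increment only satisfies $\mathbb{E}|Y(t,y)-Y(t',y)|^3\le C|t-t'|^{3/2}$, whose exponent is too small for a three-parameter criterion. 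Instead I would exploit the martingale property: for neighbouring points $y,y'$ of the dyadic lattice $2^{-k}\mathbb{Z}^2\cap\{a\le|y|\le M\}$, the difference $Y(\cdot,y)-Y(\cdot,y')$ is a continuous $L^3$-martingale, so Doob's maximal inequality and (\ref{eq:KolmChen}) give $\mathbb{E}\big[\sup_{0\le t\le K}|Y(t,y)-Y(t,y')|^3\big]\le C\,2^{-k(2+\beta)}$. A Borel--Cantelli argument over the $\approx 2^{2k}$ neighbouring pairs then produces, almost surely, a modulus of continuity in $y$ along the dyadic points that is uniform in $t\in[0,K]$; since $t\mapsto Y(t,y)$ is continuous for each such $y$, this uniform modulus lets one extend $Y$ to a process that is continuous in $t$ for every $y$ and has, for each $t$, a modulus of continuity in $y$ uniform in $t\in[0,K]$ --- and such a process is jointly continuous on $[0,K]\times\{a\le|y|\le M\}$. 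Letting $a\downarrow 0$ and $M\uparrow\infty$ along sequences completes the proof on $[0,K]\times(\mathbb{R}^2\setminus\{0\})$, for every $K$.
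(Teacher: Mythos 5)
Parts (a) and (c) of your proposal are sound. Your computation for (a) is essentially the paper's (the paper reverses time, writing $\tilde W(u)=W(v)-W(v-u)$, where you condition on nested increments, but both reduce to the same Gaussian estimate $\mathbb{E}\,|c+N(0,\sigma^2 I_2)|^{-1}\le C\sigma^{-1}$). Your argument for joint continuity in (c) via Doob's maximal inequality applied to the martingale differences $Y(\cdot,y)-Y(\cdot,y')$ plus Borel--Cantelli chaining over dyadic lattices is actually more explicit than the paper, which merely remarks that one can ``consider fourth moments instead of the third''; your route is a legitimate and arguably cleaner alternative.

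The genuine gap is in (b), which is the heart of the lemma, and it is twofold. First, you never carry out the estimate: you state that ``carrying out the remaining time integrals and keeping careful track of the logarithmic factors produces the bound'' and yourself flag this bookkeeping as the main obstacle. That bookkeeping \emph{is} the proof: the paper devotes essentially all of its argument for (b) to it, via a dyadic annulus covering $C_n(y)=B_{2^n|y-y'|}(y)\cap B^c_{2^{n-1}|y-y'|}(y)\cap B^c_{2^{n-1}|y-y'|}(y')$, $-1\le n\le N$ with $N\asymp\log(1/|y-y'|)$, applied inside each of the three nested Gaussian integrals. Second, the one quantitative claim you do make --- that integrating out an increment of standard deviation $\sigma$ contributes a \emph{constant} factor of order $r\log(1/r)\max(\sigma,r)^{-2}$ --- is false as a uniform conditional bound: for the near-field kernel $\mathbf{1}_{\{|z-y|\le 4r\}}|z-y|^{-1}$ with $\sigma\ll r$ and conditional mean close to $y$, the conditional expectation is of order $\sigma^{-1}$, which exceeds $\log(1/r)/r$. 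The nested integration cannot be closed with constant per-step factors; one must carry a bound depending on the remaining integration variable through each step (the paper's innermost bound is $2+\log_+\frac{2(t-u_2)}{|z_2-y|\,|z_2-y'|}$, a function of $z_2$ that is then integrated at the next stage, and this is also where the $(1+\log_+ 2Ka^{-2})$ factor enters at the last step). You would also do well to replace your near/far decomposition of $\Phi$ by the exact vector identity $\bigl|a|a|^{-2}-b|b|^{-2}\bigr|=|a-b|\,(|a|\,|b|)^{-1}$, which the paper uses to pull out the factor $|y-y'|^3$ exactly and reduces (b) to bounding $\mathbb{E}\bigl(\int_0^t|W(u)-y|^{-1}|W(u)-y'|^{-1}\di u\bigr)^3$; this removes the case analysis entirely and leaves only the (unavoidable) annulus argument.
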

\begin{proof}

\begin{enumerate}[(a)]
\item Since for any fixed $v$, $\tilde{W}(u) := W(v)-W(v-u)$ is planar Brownian motion as well that
starts from $0$, we have
\begin{multline*}
\mathbb{E}|Y(t,y)|^2 = \int_0^t \mathbb{E}\left| \int_0^v  \frac{W(v)-W(u)-y}{|W(v)-W(u)-y|^2}
\di u \right|^2 \di v \\
\le \int_0^t \mathbb{E}\left( \int_0^t \frac{1}{|\tilde{W}(u) - y|} \di u \right)^2 \di v = t
\mathbb{E}\left( \int_0^t \frac{1}{|W(u) - y|} \di u \right)^2 .
\end{multline*}

Then by symmetry and by the independence of increments of $W$, we get that
\begin{multline*}
\mathbb{E}|Y(t,y)|^2 \le t  \mathbb{E}\left( \int_0^t \di u_1 \int_0^t \di u_2 \frac{1}
{|W(u_1) - y||W(u_2) - y|} \right) \\
= 2t  \int_{[0,t] \times \mathbb{R}^2} \di u_1 \di z_1 \frac{e^{-\frac{|z_1|^2}{2 u_1}}}{2 \pi u_1
|z_1 - y|} \int_{[u_1,t] \times \mathbb{R}^2} \di u_2 \di z_2 \frac{e^{-\frac{|z_2-z_1|^2}{2
(u_2-u_1)}}} {2 \pi (u_2-u_1)|z_2 - y|} .
\end{multline*}

Writing $z_2 = y + r (\cos \theta , \sin \theta )$, $z_1 = y + \rho (\cos \alpha , \sin \alpha)$ and
$v=u_2 - u_1$, for the inner integral here we obtain
\begin{multline*}
 \int_0^{t-u_1} \frac{\di v}{v} \int_0^{2\pi} \frac{\di \theta}{2\pi} \int_0^{\infty} \di r
 \, e^{-\frac{r^2+\rho^2-2r\rho\cos(\theta-\alpha)}{2v}} \\
\le \sqrt{2\pi} \int_0^{t-u_1} \frac{\di v}{\sqrt{v}} \int_0^{2\pi} \frac{\di \theta}{2\pi}
\int_0^{\infty} \frac{\di r}{\sqrt{2 \pi v}}  e^{-\frac{(r-\rho)^2}{2v}} \le 2 \sqrt{2 \pi (t-u_1)} .
\end{multline*}

Thus
\begin{multline*}
\mathbb{E}|Y(t,y)|^2 \le 2t \int_0^{t} \frac{\di u_1}{u_1} \int_0^{2\pi} \frac{\di \alpha}{2\pi}
\int_0^{\infty} \di \rho \, e^{-\frac{\rho^2}{2u_1}} 2 \sqrt{2 \pi (t-u_1)} \\
\le 8\pi K^2 < \infty \qquad (0 \le t \le K, y \in \mathbb{R}^2).
\end{multline*}

\item First, by the Burkholder--Davis--Gundy inequality, for any $m > 0$, there exists a finite $c_m$
such that
\begin{multline*}
\mathbb{E}|Y(t,y) - Y(t,y')|^m \\
\le c_m \mathbb{E} \left(\int_{0}^{t} \left| \int_{0}^{v} \frac{W(v)-W(u)-y}{|W(v)-W(u)-y|^2} -
\frac{W(v)-W(u)-y'}{|W(v)-W(u)-y'|^2} \di u \right|^2 \di v \right)^{\frac{m}{2}} .
\end{multline*}
Using the elementary vector equality $\left|a |a|^{-2} - b |b|^{-2}\right| = |a-b| (|a| |b|)^{-1}$,
and the fact that for any fixed $v$, $\tilde{W}(u) := W(v)-W(v-u)$ is planar Brownian motion as well
that starts from $0$, it follows that
\begin{multline} \label{eq:mthmoment}
\mathbb{E}|Y(t,y) - Y(t,y')|^m \\
\le c_m |y - y'|^m \mathbb{E} \left(\int_{0}^{t} \left( \int_{0}^{t} \frac{1}{|\tilde{W}(u)-y|
|\tilde{W}(u)-y'|} \di u \right)^2 \di v \right)^{\frac{m}{2}} \\
= c_m t^{\frac{m}{2}} |y - y'|^m \mathbb{E} \left(\int_{0}^{t}  \frac{1}{|W(u)-y| |W(u)-y'|} \di u
\right)^{m},
\end{multline}
where for any fixed $v$, $\tilde{W}(u) := W(v)-W(v-u)$ is planar Brownian motion as well that starts
from $0$.

Thus to show (\ref{eq:KolmChen}), it is enough to give a suitable upper estimate for the last
expectation in (\ref{eq:mthmoment}) when the norms of $y$ and $y'$ are bounded below by $a$ and $t \in
[0, K]$. Now, by symmetry and by the independence of increments of Brownian motion, with $m=3$ we
obtain that
\begin{multline} \label{eq:thirdmom}
\mathbb{E} \left(\int_{0}^{t}  \frac{1}{|W(u)-y| |W(u)-y'|} \di u  \right)^{3} \\
= 6 \mathbb{E} \int_{0}^{t} \di u_1 \int_{u_1}^{t} \di u_2 \int_{u_2}^{t} \di u_3 \prod_{j=1}^{3}
\frac{1}{|W(u_j)-y| |W(u_j)-y'|}  \\
= 6   \int_{[0,t] \times \mathbb{R}^2} \di u_1 \di z_1 \frac{e^{-\frac{|z_1|^2}{2 u_1}}}{2 \pi u_1}
\int_{[u_1,t] \times \mathbb{R}^2} \di u_2 \di z_2 \frac{e^{-\frac{|z_2-z_1|^2}{2 (u_2-u_1)}}}
{2 \pi (u_2-u_1)} \\
 \times \int_{[u_2,t] \times \mathbb{R}^2} \di u_3 \di z_3 \frac{e^{-\frac{|z_3-z_2|^2}{2 (u_3-u_2)}}}
 {2 \pi (u_3-u_2)} \prod_{j=1}^{3} \frac{1}{|z_j-y| |z_j-y'|} .
\end{multline}

Without loss of generality, from now on we may assume that $0 < |y-y'| \le 1/2$. $B_r(x)$ will denote
the closed disc centered at $x$ with radius $r$. Here and later we use the following covering:
\[
\mathbb{R}^2 = \left(B_1(y) \cup B_1(y')\right)^c \cup \bigcup_{n=-1}^{N} \left(C_n(y) \cup
C_n(y')\right),
\]
where $C_n(y):= B_{2^{n}|y-y'|}(y) \cap B_{2^{n-1}|y-y'|}^c(y) \cap B_{2^{n-1}|y-y'|}^c(y')$, $0 \le n
\le N$, $N=\lceil \log(|y-y|^{-1} / \log 2 \rceil$, and $C_{-1}(y):=B_{2^{-1}|y-y'|}(y)$. For
$C_n(y')$ the definitions are similar.

To show the method, let us estimate the innermost integral $I_1$ in (\ref{eq:thirdmom}) using the
above covering of $\mathbb{R}^2$. First,
\begin{multline*}
I_{1}^{*} := \int_{u_2}^t \di u_3 \int_{\left(B_1(y) \cup B_1(y')\right)^c} \di z_3
\frac{e^{-\frac{|z_3-z_2|^2}{2 (u_3-u_2)}}}{2 \pi (u_3-u_2)} \frac{1}{|z_3-y| |z_3-y'|} \\
\le \int_{0}^{t-u_2} \di v \int_{\mathbb{R}^2} \di z_3 \frac{e^{-\frac{|z_3-z_2|^2}{2 v}}}{2 \pi v} =
t-u_2.
\end{multline*}

Second, write $z_3 = y + r (\cos \theta , \sin \theta )$, $z_2 = y + \rho (\cos \alpha , \sin \alpha
)$, and for $n=0,1, \dots, N$ obtain that
\begin{multline*}
I_{1,n}(y) := \int_{u_2}^t \di u_3 \int_{C_n} \di z_3 \frac{e^{-\frac{|z_3-z_2|^2}{2 (u_3-u_2)}}}
{2 \pi (u_3-u_2)} \frac{1}{|z_3-y| |z_3-y'|} \\
\le  \int_{0}^{2\pi} \frac{\di \theta}{2\pi} \int_{0}^{t-u_2} \frac{\di v}{v}
\int_{2^{n-1}|y-y'|}^{2^{n}|y-y'|} \di r \frac{e^{-\frac{r^2 + \rho^2 - 2 r
\rho \cos(\theta - \alpha )}{2 v}}}{2^{n-1}|y-y'|} \\
= \int_{0}^{2\pi} \frac{\di \theta}{2\pi} \int_{0}^{t-u_2} \frac{\di v}{\sqrt{v}} e^{- \frac{\rho^2
\sin^2 \theta}{2v}} \int_{\left(2^{n-1}|y-y'| - \rho \cos \theta \right)/\sqrt{v}}^{\left(2^{n}|y-y'|
- \rho \cos \theta \right)/\sqrt{v}} \di s \frac{e^{-\frac{s^2}{2}}}{2^{n-1}|y-y'|} \\
\le \int_{0}^{2\pi} \frac{\di \theta}{2\pi} \int_{0}^{t-u_2} \frac{\di v}{v} e^{- \frac{\rho^2 \sin^2
\theta}{2v}} .
\end{multline*}
(The value of $\alpha$ clearly does not matter, so it was replaced by 0.) Here one can use the simple
estimate
\[
\int_{0}^{x} \frac{1}{v} e^{-\frac{b}{v}} \di v \le \frac{1}{e} + \log_+ \frac{x}{b}.
\]
Then
\begin{multline*}
I_{1,n}(y) \le \int_{0}^{2\pi} \left(\frac{1}{e} + \log_+ \frac{2(t-u_2)}{\rho^2
\sin^2 \theta} \right)  \frac{\di \theta}{2\pi} \\
\le \frac{1}{e} + \log_+ \frac{2(t-u_2)}{\rho^2} -  \int_{0}^{2\pi} \log(\sin^2 \theta )
\frac{\di \theta}{2\pi} \\
< 2 + \log_+ \frac{2(t-u_2)}{|z_2 - y|^2}
\end{multline*}
$(n = 0, 1, \dots, N)$, since $\rho = |z_2 - y|$. The estimate for $I_{1,-1}(y)$ is the same, and so
is for any $I_{1,n}(y')$ replacing $y$ by $y'$.

In sum, the estimate for the innermost integral is
\begin{multline}
I_1 \le I_{1}^{*} + \sum_{n=-1}^{N} \left(I_{1,n}(y) + I_{1,n}(y')\right) \\
\le t-u_2 + 12 \log \frac{1}{|y-y'|} \left(2 + \log_+ \frac{2(t-u_2)}{|z_2 - y| |z_2 - y'|} \right) ,
\end{multline}
since $N+2 < 6 \log(1/|y-y'|)$ when $0 < |y-y'| \le 1/2$.

The estimation of the second and third integrals in (\ref{eq:thirdmom}) can go in a similar fashion.
Omitting the details, the result is
\begin{multline} \label{eq:thirdmomest}
\mathbb{E} \left(\int_{0}^{t}  \frac{1}{|W(u)-y| |W(u)-y'|} \di u  \right)^{3} \\
\le c(K) \log^4 \frac{1}{|y-y'|} \left(1 + \log_+ \frac{2t}{|y| |y'|} \right) \\
\le c(K) \log^4 \frac{1}{|y-y'|} \left(1 + \log_+ 2K a^{-2} \right),
\end{multline}
for any $y,y'$ such $|y|, |y'| \ge a$, $0 < |y-y'| \le 1/2$, and $t \in [0,K]$, where $c(K)$ is a
finite constant depending on $K$. By (\ref{eq:mthmoment}) this verifies (\ref{eq:KolmChen}) with
arbitrary $\beta < 1$, and so proves the lemma.

\item By (b), $Y(t,y)$ as a function of $y$ satisfies the condition of a special case of the
Kolmogorov--Chentsov theorem, so a.s. it has a continuous version as a function of $y$ when $y \ne 0$.

One can similarly show that $Y(t,y)$ has a version which is a continuous function of $(t,y)$ when $y
\ne 0$, e.g. considering fourth moment instead of the third.
\end{enumerate}
\end{proof}

This last lemma investigates the properties of the integral on the left side of formula
(\ref{eq:TRYmain}).
\begin{lem} \label{le:Xprop}
Consider the integral
\[
X(t,y) := \int_{0}^{t} \log |W(t) - W(u) - y| \di u  \qquad (t \ge 0, y \in \mathbb{R}^2).
\]
Then it has the following properties.
\begin{enumerate}[(a)]
\item
\begin{equation}\label{eq:Xprop1}
\mathbb{E} X(t,y) = t \log|y| - \frac{|y|^2+2t}{4} \textup{Ei}\left(-\frac{|y|^2}{2t}\right) - \frac12
t e^{-\frac{|y|^2}{2t}} \quad (y \ne 0),
\end{equation}
\begin{equation}\label{eq:Xprop2}
\lim_{y \to 0} \mathbb{E} X(t,y) = \mathbb{E} X(t,0) = \frac{t}{2} (\log(2t) - C - 1),
\end{equation}
where \textup{Ei} denotes the exponential integral function and $C$ is Euler's constant.

\item
\begin{equation}\label{eq:XKolmChen}
\mathbb{E}|X(t,y) - X(t,y')|^3 \le C |y - y'|^{2+\beta}
\end{equation}
with a finite $C=C(K,a)$ and with an arbitrary $\beta \in (0,1)$ for any $t \in [0, K]$ and $|y|, |y'|
\ge a$, where $a>0$ is arbitrary, fixed. More exactly,
\begin{equation}\label{eq:XKolmChenex}
\mathbb{E}|X(t,y) - X(t,y')|^3 \le c(K) \left(1 + \log_+ 2K a^{-2} \right) \log^4 \frac{1}{|y-y'|}  |y
- y'|^{3},
\end{equation}
where $c(K)$ is a finite constant depending only on $K$.

\end{enumerate}
\end{lem}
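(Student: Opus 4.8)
\emph{Part (a).} Since $W(t)-W(u)$ has the law of $W(t-u)$ for $0\le u\le t$, the plan is first to apply Fubini's theorem to get $\mathbb{E}X(t,y)=\int_{0}^{t}\mathbb{E}\log|W(s)-y|\,\di s$; this is legitimate because for $y\ne0$ one has $\mathbb{E}\bigl|\log|W(s)-y|\bigr|\le C(y)\,s^{-1}e^{-|y|^{2}/(8s)}+C'(y)$, which is integrable on $[0,t]$ (and similarly for $y=0$). The heart of the matter is the explicit evaluation of $u_{s}(y):=\mathbb{E}\log|W(s)-y|$. I would note that $u_{s}=p_{s}\ast\log|\cdot|$ (with $p_{s}$ the planar heat kernel) is smooth and radial and solves $\Delta u_{s}=2\pi p_{s}$; writing $u_{s}(y)=\psi(|y|)$ and integrating the radial Laplacian $\frac1r(r\psi')'=s^{-1}e^{-r^{2}/(2s)}$ — the integration constant being forced by smoothness of $u_{s}$ at the origin — gives $\psi'(r)=(1-e^{-r^{2}/(2s)})/r$, hence, using $\psi(0)=\mathbb{E}\log|W(s)|=\frac12\log(2s)-\frac{C}{2}$ (since $|W(s)|^{2}$ has the law of $2s$ times an $\mathrm{Exp}(1)$ variable and $\mathbb{E}\log\mathrm{Exp}(1)=-C$) together with $\int_{0}^{x}\frac{1-e^{-v}}{v}\,\di v=C+\log x-\mathrm{Ei}(-x)$, one obtains $\mathbb{E}\log|W(s)-y|=\log|y|-\frac12\mathrm{Ei}(-|y|^{2}/(2s))$ (the same formula also drops out of the noncentral $\chi^{2}_{2}$ density). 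It then remains to integrate in $s$: substituting $w=|y|^{2}/(2s)$ and integrating by parts twice (using $\frac{\di}{\di w}\mathrm{Ei}(-w)=e^{-w}/w$ and $\int e^{-w}w^{-2}\,\di w=-e^{-w}/w-\mathrm{Ei}(-w)$) yields $\int_{0}^{t}\mathrm{Ei}(-|y|^{2}/(2s))\,\di s=\bigl(t+\frac{|y|^{2}}{2}\bigr)\mathrm{Ei}(-\frac{|y|^{2}}{2t})+t\,e^{-|y|^{2}/(2t)}$, which is precisely (\ref{eq:Xprop1}). For $y=0$ one computes directly $\mathbb{E}X(t,0)=\int_{0}^{t}\bigl(\frac12\log(2s)-\frac{C}{2}\bigr)\di s=\frac{t}{2}(\log(2t)-C-1)$, i.e. the second line of (\ref{eq:Xprop2}); the first equality in (\ref{eq:Xprop2}) follows by inserting the expansion $\mathrm{Ei}(-x)=C+\log x+O(x)$ ($x\to0^{+}$) into (\ref{eq:Xprop1}), whereupon the two $t\log|y|$ terms cancel. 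Finiteness and joint continuity on $\mathbb{R}_{+}\times\mathbb{R}^{2}$ — including at $t=0$ (all terms vanish) and at $(t,0)$, $t>0$ (by the limit just computed) — are then read off the closed forms.

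\emph{Part (b), main estimate.} For the moment bound I would proceed in the spirit of Lemma \ref{le:contversion}(b), but more cheaply because $X$ carries no stochastic integral. Put $D=|y-y'|$ and $g_{u}=\log|W(t)-W(u)-y|-\log|W(t)-W(u)-y'|$. From $\log x\le x-1$ one has the elementary pointwise bound
\[
|g_{u}|=h\bigl(W(t)-W(u)\bigr),\qquad h(z):=\bigl|\log|z-y|-\log|z-y'|\bigr|\le\frac{D}{|z-y|\wedge|z-y'|}\le D\Bigl(\frac{1}{|z-y|}+\frac{1}{|z-y'|}\Bigr).
\]
Then $\mathbb{E}|X(t,y)-X(t,y')|^{3}\le\mathbb{E}\bigl(\int_{0}^{t}|g_{u}|\,\di u\bigr)^{3}=6\int_{0<u_{1}<u_{2}<u_{3}<t}\mathbb{E}\prod_{j=1}^{3}h\bigl(W(t)-W(u_{j})\bigr)\,\di u$, and time reversal ($u\mapsto t-u$ turns $W(t)-W(\cdot)$ into a Brownian motion) recasts this as $6\int_{0<s_{1}<s_{2}<s_{3}<t}\mathbb{E}\prod_{j}h(W(s_{j}))\,\di s$. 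Substituting the bound on $h$, expanding the product into its eight terms, and conditioning successively by the Markov property reduces everything to nested Gaussian integrals $\int_{0<s_{1}<s_{2}<s_{3}<t}\mathbb{E}\prod_{j}|W(s_{j})-w_{j}|^{-1}\,\di s$ with $w_{j}\in\{y,y'\}$, each of which is dominated using the standard estimate $\int_{\mathbb{R}^{2}}p_{\Delta}(z-z')\,|z-w|^{-1}\,\di z\le\sqrt{\pi/2}\;\Delta^{-1/2}$ together with $\int_{0<s_{1}<s_{2}<s_{3}<t}\bigl(s_{1}(s_{2}-s_{1})(s_{3}-s_{2})\bigr)^{-1/2}\di s=\frac{4\pi}{3}t^{3/2}$. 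This already gives $\mathbb{E}|X(t,y)-X(t,y')|^{3}\le c(K)\,D^{3}$ uniformly in $y,y'$, which is stronger than (\ref{eq:XKolmChenex}) and hence implies (\ref{eq:XKolmChen}) with any $\beta\in(0,1)$ (for $D$ bounded; the range $D\ge\frac12$, irrelevant for the Kolmogorov--Chentsov application, is handled crudely by $\mathbb{E}|X(t,y)-X(t,y')|^{3}\le4(\mathbb{E}|X(t,y)|^{3}+\mathbb{E}|X(t,y')|^{3})$ and a part-(a)-type moment estimate).

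\emph{Which step is hard.} If one insists on reproducing the stated bound (\ref{eq:XKolmChenex}) with its $\log_{+}2Ka^{-2}$ and $\log^{4}(1/|y-y'|)$ factors verbatim, one would instead write the log-difference through the mean-value representation $\log|z-y|-\log|z-y'|=\int_{0}^{1}\nabla(\log|\cdot|)(z-y_{\tau})\cdot(y-y')\,\di\tau$, $y_\tau\in[y,y']$, and run the same covering of $\mathbb{R}^{2}$ into a far region and dyadic annuli about $y$ and $y'$, with the $\log_{+}2Ka^{-2}$ arising from the outermost time integral; the only genuine labour there is the Gaussian bookkeeping near the logarithmic singularities of $\log|\cdot|$ and for small $s$, and this is a direct transcription of the estimation already carried out for $Y$ in the proof of Lemma \ref{le:contversion}(b), so — as there — the details may be omitted. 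Thus the one non-routine point in the whole statement is the special-function computation of part (a); part (b) is, in the clean version above, a short Markov-property-plus-Gaussian argument, and in the paper-matching version a repetition of the $Y$ estimate.
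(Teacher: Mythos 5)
Your proof is correct, and in part (b) it takes a genuinely different route from the paper. For part (a) both you and the paper start from the time reversal $\tilde W(u)=W(t)-W(t-u)$ plus Fubini; the paper then evaluates $\mathbb{E}\log|W(s)-y|$ via the circular-average identity $\int_0^{2\pi}\tfrac12\log(r^2+\rho^2-2r\rho\cos\theta)\,\tfrac{d\theta}{2\pi}=\log(r\vee\rho)$ and simply asserts that the remaining $s$-integration "gives exactly the results," whereas you integrate the radial Poisson equation $\Delta u_s=2\pi p_s$ to reach the same intermediate formula $\mathbb{E}\log|W(s)-y|=\log|y|-\tfrac12\mathrm{Ei}(-|y|^2/(2s))$ and then carry out the $s$-integration explicitly; the two computations are equivalent in substance (both rest on harmonicity of $\log|\cdot|$), and your closed forms check against (\ref{eq:Xprop1}) and (\ref{eq:Xprop2}). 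For part (b) the paper uses the same pointwise bound $|\log b-\log a|\le(a^{-1}+b^{-1})|b-a|$ and reduces to the triple nested Gaussian integral, but then declares the rest "essentially the same" as the proof of Lemma \ref{le:contversion}(b), i.e.\ it re-runs the dyadic-annuli covering and inherits the $\log^4(1/|y-y'|)$ and $\log_+ 2Ka^{-2}$ factors. Your successive-conditioning argument, using $\sup_{z'}\int_{\mathbb{R}^2}p_\Delta(z-z')|z-w|^{-1}\,dz=\sqrt{\pi/(2\Delta)}$ together with the Dirichlet integral $\int_{0<s_1<s_2<s_3<t}(s_1(s_2-s_1)(s_3-s_2))^{-1/2}\,ds=\tfrac{4\pi}{3}t^{3/2}$, is self-contained and yields the strictly stronger, log-free bound $c(K)\,|y-y'|^3$ uniformly in $y,y'$ (no lower bound $a$ on $|y|,|y'|$ needed), which implies (\ref{eq:XKolmChen}) and, for $|y-y'|\le 1/2$, also (\ref{eq:XKolmChenex}). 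Your diagnosis of why this works is also correct: the logarithmic losses in Lemma \ref{le:contversion}(b) come from the product $|W(u)-y|^{-1}|W(u)-y'|^{-1}$ of two singularities at the same time point in the integrand of $Y$, a feature absent from $X$, so the $X$ estimate really is cheaper than the paper's imported argument suggests.
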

\begin{proof}
\begin{enumerate}[(a)]
\item For any fixed $t$, $\tilde{W}(u) := W(t)-W(t-u)$ is planar Brownian motion as well that starts
from $0$. Thus
\begin{multline*}
\mathbb{E} X(t,y) = \int_{0}^{t} \log |\tilde{W}(u) - y| \di u = \int_0^t \int_{\mathbb{R}^2}
\frac{\log|x-y|}{2\pi u} e^{-\frac{|x|^2}{2u}} \di x \di u \\
= \int_0^t \di u \int_0^{\infty} \di r \frac{r}{u} e^{-\frac{r^2}{2u}} \int_0^{2 \pi} \frac{\di
\theta}{2 \pi} \frac12 \log\left(r^2 + \rho^2 -2r\rho \cos(\theta-\alpha) \right) ,
\end{multline*}
where $x=r(\cos\theta, \sin\theta)$ and $y=\rho(\cos\alpha, \sin\alpha)$. It is clear that the last
integral does not depend on $\alpha$, so we can replace $\alpha$ by 0. Since
\[
\int_0^{2 \pi} \frac{\di \theta}{2 \pi} \frac12 \log\left(r^2 + \rho^2 -2r\rho \cos(\theta) \right) =
\log(r \vee \rho),
\]
it follows that
\[
\mathbb{E} X(t,y) = \int_{0}^{t} \di u \int_0^{\infty} \di r \frac{r}{u} e^{-\frac{r^2}{2u}} \log(r
\vee \rho) ,
\]
that gives exactly the results (\ref{eq:Xprop1}) and (\ref{eq:Xprop2}).

\item
\begin{multline*}
\mathbb{E}|X(t,y) - X(t,y')|^3 = \mathbb{E} \left| \int_0^t \log \frac{|\tilde{W}(u) - y|}
{|\tilde{W}(u) - y'|} \di u \right|^3 \\
\le \mathbb{E} \left( \int_0^t \left|\log \frac{|\tilde{W}(u) - y|}{|\tilde{W}(u) - y'|} \right| \di u
\right)^3 .
\end{multline*}

Using the inequality $|\log b - \log a| \le (a \wedge b)^{-1} |b-a| \le (a^{-1} + b^{-1}) |b-a|$ for
$a,b > 0$, and then symmetry and the independent increments of $W$, we obtain that
\begin{multline*}
\mathbb{E}|X(t,y) - X(t,y')|^3 \\
 \le |y - y'|^3 \; \mathbb{E} \left( \int_0^t \left( |\tilde{W}(u) - y|^{-1}
 + |\tilde{W}(u) - y'|^{-1} \right) \di u \right)^3 \\
\le 6 |y - y'|^3 \; \int_{[0,t] \times \mathbb{R}^2} \di u_1 \di z_1 \frac{e^{-\frac{|z_1|^2}{2
u_1}}}{2 \pi u_1} \int_{[u_1,t] \times \mathbb{R}^2} \di u_2 \di z_2 \frac{e^{-\frac{|z_2-z_1|^2}{2
(u_2-u_1)}}}
{2 \pi (u_2-u_1)} \\
 \times \int_{[u_2,t] \times \mathbb{R}^2} \di u_3 \di z_3 \frac{e^{-\frac{|z_3-z_2|^2}{2 (u_3-u_2)}}}
 {2 \pi (u_3-u_2)} \prod_{j=1}^{3} \left( \frac{1}{|z_j-y|} + \frac{1}{|z_j-y'|} \right).
\end{multline*}

Since this last formula is very similar to formula (\ref{eq:thirdmom}), the remaining part of the
proof is essentially the same, so omitted. The result differs from the one of Lemma
\ref{le:contversion}(b) only by a constant multiplier depending on $K$.

\end{enumerate}

\end{proof}

%%%%%%%%%%%%%%%%%%%%%%%%%%%%%%%%%%%%%%%%%%%%%%%%%%%%%%%%%%%%%%%%%%%

\end{document}